\date{}
\theoremstyle{plain}
\numberwithin{equation}{section}
\newtheorem{Theorem}{Theorem}[section]
\newtheorem{Proposition}{Proposition}[section]
\newtheorem{Lemma}{Lemma}[section]
\newtheorem{Corollary}{Corollary}[section]
\newtheorem{Definition}{Definition}[section]
\theoremstyle{definition}
\newtheorem{Remark}{Remark}[section]
\newcommand{\definition}{{\lower .5ex
\hbox{$\>\>\stackrel{\triangle}{=}\>\>$} }}
\newcommand\R{\mathbb R}
\newcommand\diverg{\mathop{\mbox{\rm div}}}
\newcommand\supp{\mathop{\mbox{\rm supp}}}
\begin{document}

\baselineskip=24pt

\leftskip 0 true cm
\rightskip 0 true cm

\newpage
\begin{center}
{\large \bf On the Well-posedness of 2-D Incompressible Navier-Stokes Equations with Variable Viscosity in Critical Spaces$^*$
\footnote{$^*$This work is supported by NSFC under grant number 11571118,
 and
by the Fundamental Research Funds for the
Central Universities of China under the grant
number 2012ZZ0072.}}
\\
\bigskip
\bigskip
\footnote{$^{**}$
Corresponding Author: Huan XU}
\footnote{$^{\dag}$
Email Address:
hxuscut@163.com (H XU);\
 yshli@scut.edu.cn (YS LI);\
 pingxiaozhai@163.com (XP ZHAI). }
Huan XU$^1$, Yongsheng LI$^1$ \ and \ Xiaoping ZHAI$^2$\\[2ex]
 $^1$School of Mathematics,
 South China University of Technology,
\\[0.5ex]
 Guangzhou, Guangdong 510640, P. R. China\\[2ex]
$^2$Department of Mathematics, Sun Yat-Sen University,
\\[0.5ex]
 Guangzhou, Guangdong 510275, P. R. China

\end{center}

\bigskip
\bigskip


\centerline{\bf Abstract}
In this paper, we first prove the local well-posedness of the 2-D incompressible Navier-Stokes equations with variable viscosity in critical Besov spaces with negative regularity indices, without smallness assumption on the variation of the density. The key is to prove for $p\in(1,4)$ and $a\in\dot{B}_{p,1}^{\frac2p}(\R^2)$ that the solution mapping $\mathcal{H}_a:F\mapsto\nabla\Pi$ to the 2-D elliptic equation
$\diverg\big((1+a)\nabla\Pi\big)=\diverg F$ is bounded on $\dot{B}_{p,1}^{\frac2p-1}(\R^2)$. More precisely, we prove that
$$\|\nabla\Pi\|_{\dot{B}_{p,1}^{\frac2p-1}}\leq C\big(1+\|a\|_{\dot{B}_{p,1}^{\frac2p}}\big)^2\|F\|_{\dot{B}_{p,1}^{\frac2p-1}}.$$
The proof of the uniqueness of solution to \eqref{Model2} relies on a Lagrangian approach \cite{DM CPAM 2012,DM ARMA 2013,Danchin AIF 2014}.
When the viscosity coefficient $\mu(\rho)$ is a positive constant, we prove that \eqref{Model2} is globally well-posed.

\bigskip
\noindent {\bf Key Words:}
Incompressible Navier-Stokes equations; Littlewood-Paley theory; Lagrangian coordinates; Well-posedness.\\
\bigskip
\noindent {\bf AMS 2010 Subject Classification}:
35Q35; 76D03; 35B45.
\bigskip
\leftskip 0 true cm
\rightskip 0 true cm

\section{\large\bf Introduction}

\setcounter{Theorem}{0}
\setcounter{Lemma}{0}

\setcounter{section}{1}

\label{1} \setcounter{Theorem}{0} \setcounter{Lemma}{0}
\setcounter{section}{1}
In this paper, we study the Cauchy problem of the 2-D incompressible Navier-Stokes equations with variable viscosity in critical Besov spaces
\begin{eqnarray}\label{Model1}
\left\{\begin{aligned}
&\partial_t\rho+u\cdot\nabla\rho =0,\\
&\partial_t(\rho u)+\diverg(\rho u\otimes u)-\diverg\big(2\mu(\rho)\mathcal{M}(u)\big)+\nabla\Pi=0,\\
&\diverg u=0,\\
&(\rho,u)|_{t=0}=(\rho_0,u_0),
\end{aligned}\right.
\end{eqnarray}
where $\rho$ and $u=(u_{1},u_{2})$ stand for the density and velocity field, $\mathcal{M}(u)=\frac{1}{2}(\partial_{i}u_{j}+\partial_{j}u_{i})$, $\Pi$ is a scalar pressure function, the viscosity coefficient $\mu(\rho)$ is smooth, positive on $[0,\infty)$. Throughout, we assume that the space variable $x$ belongs to the whole space $\R^2$.

Global weak solutions with finite energy to system \eqref{Model1} were first obtained by the Russian school \cite{AKM} in the case when $\mu(\rho)=\mu>0$ and $\rho_0$ is bounded away from 0. We also refer to \cite{Lions} for an overview of results on weak solutions and to \cite{Desjardins DIE1,Desjardins DIE2,Desjardins ARMA} for some improvements. However, the uniqueness of weak solutions is not known in general. When $\mu(\rho)=\mu>0$ and $\rho_0$ is bounded away from 0, Ladyzhenskaya and Solonnikov \cite{Ladyzhenskaya Solonnikov} initiated the studies for unique solvability of system \eqref{Model1} in a bounded domain $\Omega$ with homogeneous Dirichlet boundary condition for $u$. Similar results were established by Danchin \cite{Danchin ADE 2004} in the whole space $\R^n$ with initial data in the almost critical Sobolev spaces. On the other hand, from the viewpoint of physics, it is interesting to study the case for which density is discontinuous. Recently, Danchin and Mucha \cite{DM ARMA 2013} proved by using a Lagrangian approach that the system \eqref{Model1} has a unique local solution with initial data $(\rho_0, u_0)\in L^{\infty}(\R^n)\times H^2(\R^n)$ if initial vacuum dose not occur, see also some improvements in \cite{HPZ JMPA 2013,PZZ CPDE 2013}.

On the other hand, if the density $\rho$ is away from zero, we denote by $a\stackrel{\mathrm{def}}{=}\frac{1}{\rho}-1$ and
$\tilde{\mu}(a)\stackrel{\mathrm{def}}{=}\mu(\frac{1}{1+a})$ so that the system \eqref{Model1} can be equivalently reformulated as
\begin{eqnarray}\label{Model2}
\left\{\begin{aligned}
&\partial_t a+u\cdot\nabla a =0,\\
&\partial_t u+u\cdot\nabla u-(1+a)\left\{\diverg\big(2\tilde{\mu}(a)\mathcal{M}(u)\big)-\nabla\Pi\right\}=0,\\
&\diverg u=0,\\
&(a,u)|_{t=0}=(a_0,u_0).
\end{aligned}\right.
\end{eqnarray}
Just as the classical Navier-Stokes equations, the system \eqref{Model2} also has a scaling. Indeed, if $(a,u)$ solves \eqref{Model2} with initial data $(a_0,u_0)$, then for any $\lambda>0$,
$$
(a,u)_\lambda(t,x)\stackrel{\mathrm{def}}{=}(a(\lambda^2t,\lambda x),\lambda u(\lambda^2t,\lambda x))
$$
also solves \eqref{Model2} with initial data $(a_0(\lambda\cdot),\lambda u_0(\lambda\cdot))$. Moreover, the norm of $(a_0(\lambda\cdot),\lambda u_0(\lambda\cdot))$ is independent of $\lambda$ in the so called critical spaces $\dot{B}_{p,1}^{\frac{2}{p}}(\R^{2})\times\dot{B}_{p,1}^{\frac{2}{p}-1}(\R^{2})$. In resent ten years, the French school studied the well-posedness of incompressible or compressible fluids in the framework of critical Besov spaces (see, e.g., \cite{Abidi RMI 2007,AP AIF 2007,Danchin CPDE 2001,Danchin PRSES 2003,Danchin CPDE 2007}).

Motivated by \cite{AGZ ARMA 2012,AGZ JMPA 2013,Danchin CPDE 2007,Danchin JDE 2010,Danchin AIF 2014} concerning the well-posedness of the incompressible or compressible fluids without smallness assumption on the variation of the initial density, we study the well-posedness of \eqref{Model2} in critical Besov spaces with negative regularity. More precisely, we will prove the following main theorem:
\begin{Theorem}\label{Theorem1.1}
Let $p\in(1,4)$, $(a_0,u_0)\in\dot{B}_{p,1}^{\frac{2}{p}}(\R^2)\times\dot{B}_{p,1}^{\frac{2}{p}-1}(\R^2)$
with $\diverg u_0=0$ and $1+a_0\geq\kappa>0$. Assume that $\tilde{\mu}(a)$
is a smooth, positive function on $[0,\infty)$. Then \eqref{Model2} has a unique local solution $(a,u,\nabla\Pi)$ on $[0,T]$ such that
\begin{align}\label{soln regularity}
a\in& C([0,T];\dot{B}_{p,1}^{\frac{2}{p}}(\R^2))\cap \widetilde{L}_T^\infty(\dot{B}_{p,1}^{\frac{2}{p}}(\R^2)),
\ \nabla\Pi\in L_T^1(\dot{B}_{p,1}^{\frac{2}{p}-1}(\R^2)),\nonumber\\
u\in& C([0,T];\dot{B}_{p,1}^{\frac{2}{p}-1}(\R^2))\cap \widetilde{L}_T^\infty(\dot{B}_{p,1}^{\frac{2}{p}-1}(\R^2))
\cap L_T^1(\dot{B}_{p,1}^{\frac{2}{p}+1}(\R^2)).
\end{align}
Moreover, if $\tilde{\mu}(a)$ is a positive constant and $a_0\in B_{p,1}^{\frac{2}{p}}(\R^2)$,
then \eqref{Model2} has a unique global solution $(a,u,\nabla\Pi)$ such that for any $t>0$\rm{:}
\begin{align}\label{global soln}
\|a\|_{\widetilde{L}_t^\infty(B_{p,1}^{\frac{2}{p}})}+\|u\|_{\widetilde{L}_t^\infty(\dot{B}_{p,1}^{\frac{2}{p}-1})}+\|u\|_{L_t^1(\dot{B}_{p,1}^{\frac{2}{p}+1})}
+\|\nabla\Pi\|_{L_t^1(\dot{B}_{p,1}^{\frac{2}{p}-1})}\leq C\exp\left\{C\exp\big(Ct^\frac12\big)\right\}.
\end{align}
for some time independent constant $C$.
\end{Theorem}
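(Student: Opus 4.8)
The plan is to obtain the local solution by an iteration scheme resting on three a priori estimates --- a transport estimate for $a$, a maximal--regularity estimate for the variable--coefficient Stokes system, and the elliptic estimate for $\mathcal H_a$ quoted above --- to prove uniqueness through the Lagrangian reformulation of \cite{DM CPAM 2012,DM ARMA 2013,Danchin AIF 2014}, and, when $\tilde\mu$ is constant, to globalize by feeding the basic energy balance into a two--dimensional logarithmic interpolation. Two structural facts let the scheme close with $a_0$ of arbitrary size: the bound $\|\nabla\Pi\|_{\dot B_{p,1}^{2/p-1}}\le C(1+\|a\|_{\dot B_{p,1}^{2/p}})^2\|F\|_{\dot B_{p,1}^{2/p-1}}$ for $\mathcal H_a$, and the product law $\dot B_{p,1}^{2/p}\cdot\dot B_{p,1}^{2/p-1}\hookrightarrow\dot B_{p,1}^{2/p-1}$, valid exactly because $\frac2p+\frac2p-1>0$; this last point is where $p\in(1,4)$ enters, both to control $u\cdot\nabla u$ in $\dot B_{p,1}^{2/p-1}$ and to multiply divergence--form viscous terms by $1+a$ in the same space.

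\medskip
\noindent\emph{Local existence.}
Given $(a^n,u^n)$, I would define $a^{n+1}$ by $\partial_t a^{n+1}+u^n\cdot\nabla a^{n+1}=0$, $a^{n+1}|_{t=0}=a_0$, and $(u^{n+1},\nabla\Pi^{n+1})$ by the linear system $\partial_t u^{n+1}+u^n\cdot\nabla u^{n+1}-(1+a^{n+1})\{\diverg(2\tilde\mu(a^{n+1})\mathcal M(u^{n+1}))-\nabla\Pi^{n+1}\}=0$, $\diverg u^{n+1}=0$, $u^{n+1}|_{t=0}=u_0$. Since $u^n$ is divergence free, transport preserves $1+a^{n+1}\ge\kappa$ and gives $\|a^{n+1}\|_{\widetilde L_T^\infty(\dot B_{p,1}^{2/p})}\le\|a_0\|_{\dot B_{p,1}^{2/p}}\exp\big(C\|u^n\|_{L_T^1(\dot B_{p,1}^{2/p+1})}\big)$. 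Taking the divergence of the momentum equation gives $\diverg\big((1+a^{n+1})\nabla\Pi^{n+1}\big)=\diverg F^{n+1}$ with $F^{n+1}=(1+a^{n+1})\diverg(2\tilde\mu(a^{n+1})\mathcal M(u^{n+1}))-u^n\cdot\nabla u^{n+1}$, so applying $\mathcal H_{a^{n+1}}$ together with the composition and product estimates bounds $\|\nabla\Pi^{n+1}\|_{L_T^1(\dot B_{p,1}^{2/p-1})}$ by a power of $1+\|a^{n+1}\|_{\widetilde L_T^\infty(\dot B_{p,1}^{2/p})}$ times $\|u^{n+1}\|_{L_T^1(\dot B_{p,1}^{2/p+1})}+\|u^n\cdot\nabla u^{n+1}\|_{L_T^1(\dot B_{p,1}^{2/p-1})}$. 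For the velocity I would use maximal regularity for $\partial_t-(1+a)\diverg(2\tilde\mu(a)\mathcal M(\cdot))$; because $\|a\|_{\dot B_{p,1}^{2/p}}$ is not small this is obtained non--perturbatively, by truncating $\tilde\mu(a)$ to low frequencies --- the high--frequency remainder being controlled in $L^\infty$ and made small once the truncation level is large enough --- and treating the resulting smooth slowly--varying principal coefficient by a localization (or a Lagrangian change of variables), the pressure dropping out of the localized estimate because $\Delta_j u^{n+1}$ remains divergence free. Choosing $T$ small then yields a bound for $(a^n,u^n,\nabla\Pi^n)$ uniform in $n$ in the class \eqref{soln regularity}; to conclude I would show the sequence is Cauchy in a norm with one less derivative (the usual loss--of--regularity argument for transport coupled to parabolic smoothing), interpolate with the uniform bound to recover convergence in the critical class, and read off the time continuity from the equations.

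\medskip
\noindent\emph{Uniqueness.}
Following \cite{DM CPAM 2012,DM ARMA 2013,Danchin AIF 2014}, I would pass to Lagrangian coordinates along the flow of $u$; then $a$ becomes stationary and equal to $a_0$, the momentum equation becomes a Stokes system whose coefficients are smooth functions of $\int_0^t\nabla u\,ds$ (hence close to constants for $T$ small), and the pressure solves an elliptic equation of the type handled by $\mathcal H_{a_0}$. Writing this system for the difference of two solutions with identical data and combining the elliptic estimate, the product laws and the maximal--regularity estimate above, a Gronwall argument in $\dot B_{p,1}^{2/p-1}$ forces the difference to vanish on a short interval, and then on all of $[0,T]$ since both solutions lie in \eqref{soln regularity}.

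\medskip
\noindent\emph{Global existence for constant viscosity, and the main difficulty.}
When $\tilde\mu\equiv\bar\mu$, the energy balance $\|\sqrt\rho u(t)\|_{L^2}^2+2\bar\mu\int_0^t\|\nabla u\|_{L^2}^2\,ds=\|\sqrt{\rho_0}u_0\|_{L^2}^2$ together with $1+a\ge\kappa$ gives, via Cauchy--Schwarz in time, $\int_0^t\|\nabla u\|_{L^2}\,ds\le C t^{1/2}$ --- the source of the $t^{1/2}$ in \eqref{global soln} --- and the inhomogeneous hypothesis $a_0\in B_{p,1}^{2/p}$ lets the transport estimate run in the full $B_{p,1}^{2/p}$ norm, which stays finite as long as $\int_0^t\|u\|_{\dot B_{p,1}^{2/p+1}}\,ds<\infty$ (the low frequencies being controlled by the energy bound). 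I would control the latter via the Stokes maximal--regularity and $\mathcal H_a$ estimates as above and a two--dimensional logarithmic interpolation bounding $\|\nabla u\|_{\dot B_{p,1}^{2/p}}$, up to lower order terms, by the energy quantities times $\log\!\big(e+\|u\|_{\dot B_{p,1}^{2/p+1}}\big)$; Gronwall then turns the $t^{1/2}$ growth into an $\exp(Ct^{1/2})$ bound for $\|u\|_{\widetilde L_t^\infty(\dot B_{p,1}^{2/p-1})}+\|u\|_{L_t^1(\dot B_{p,1}^{2/p+1})}+\|\nabla\Pi\|_{L_t^1(\dot B_{p,1}^{2/p-1})}$, which fed back through the transport estimate for $a$ produces the double exponential in \eqref{global soln}, and a continuation argument upgrades this a priori bound to a genuine global solution. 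The main obstacle throughout is the maximal regularity of the variable--coefficient viscous operator when $\|a_0\|_{\dot B_{p,1}^{2/p}}$ is arbitrarily large: this is what forces the non--perturbative treatment built on the elliptic estimate for $\mathcal H_a$ and the low--frequency freezing of $\tilde\mu(a)$, and whose interplay with the 2D logarithmic loss is what generates the double--exponential rate.
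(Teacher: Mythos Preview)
Your local existence and uniqueness sketches are in the right spirit, though the paper proceeds differently for existence: rather than an iteration scheme with a Cauchy argument at one derivative below critical, it mollifies the data, invokes a known smooth local theory to get solutions $(a^n,u^n,\nabla\Pi^n)$ to the \emph{full} nonlinear system, splits $u^n=e^{\mu t\Delta}u_0^n+\bar u^n$, proves uniform bounds for $\bar u^n$ in the critical class via Corollary~\ref{Corollary3.1} (this is exactly your ``low--frequency freezing of $\tilde\mu(a)$'' idea, implemented as the smallness condition~\eqref{smallness1}), and passes to the limit by compactness. Your iteration route is a legitimate alternative, but be aware that the Cauchy argument for the transport part with $a$ sitting in the coefficients is delicate; this is what the compactness argument sidesteps. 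The uniqueness part matches the paper's Lagrangian proof.

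The global part has a genuine gap. For $p\in(2,4)$ the datum $u_0\in\dot B_{p,1}^{2/p-1}(\R^2)$ is \emph{not} in $L^2$, so the energy identity $\|\sqrt\rho u(t)\|_{L^2}^2+2\bar\mu\int_0^t\|\nabla u\|_{L^2}^2=\|\sqrt{\rho_0}u_0\|_{L^2}^2$ you invoke is unavailable, and with it your source of the $t^{1/2}$ bound collapses. The paper repairs this by first propagating one extra derivative (so that $u(t_1)\in\dot B_{p,1}^{2/p-1}\cap\dot B_{p,1}^{2/p+2}$ for some small $t_1>0$), then writing $u=u_F+\bar u$ with $u_F=e^{(t-t_1)\Delta}u(t_1)$ and $\bar u(t_1)=0$, and running the energy method on $\bar u$, which \emph{does} start in $L^2$; the cross terms like $u_F\cdot\nabla u_F$ are handled by \eqref{gconvection}.

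Second, the mechanism you propose --- a Brezis--Gallouet--type logarithmic interpolation bounding $\|\nabla u\|_{\dot B_{p,1}^{2/p}}$ by energy quantities times a logarithm --- is not what closes the argument, and it is unclear it can: the gap between $\nabla u\in L^2_tL^2$ and $\nabla u\in L^1_t\dot B_{p,1}^{2/p}\hookrightarrow L^1_tL^\infty$ is not logarithmic. The paper instead carries out full $L^2$, $H^1$ and $H^2$ energy estimates for $\bar u$ (Lemmas~\ref{Lemma5.1}--\ref{Lemma5.3}), obtaining in particular $\|\Delta\bar u\|_{L^2([t_1,T^*);L^q)}\le C$ for every $q<\infty$; a Bernstein interpolation then gives $\|u\|_{L^1([t_1,t];\dot B_{q,1}^{2/q+1})}\le Ct^{1/2}$ for some $q>2$. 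This feeds the transport estimate for $a$ to produce $\|a\|\le C\exp(Ct^{1/2})$, and only \emph{then} a Gronwall on the critical Stokes estimate for $u$ --- with the $a$-dependent coefficients now of size $\exp(Ct^{1/2})$ --- yields the double exponential for $u$. Note the order is the reverse of what you wrote: the single exponential appears first on $a$, the double on $u$.
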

As in \cite{AGZ ARMA 2012,AGZ JMPA 2013,Danchin AFSTM 2006,Danchin JDE 2010}, a central problem to prove the local well-posedness part of Theorem \ref{Theorem1.1} is the estimate of the pressure function. However, in the particular case when the spatial dimension is two, we could prove for $p\in(1,4)$ and $a\in\dot{B}_{p,1}^{\frac2p}(\R^2)$ that the solution mapping $\mathcal{H}_a:F\mapsto\nabla\Pi$ to the 2-D elliptic equation
\begin{align}\label{elliptic equation}
\diverg\big((1+a)\nabla\Pi\big)=\diverg F\quad {\rm in}\quad\R^2
\end{align}
is bounded on $\dot{B}_{p,1}^{\frac2p-1}(\R^2)$ (see Proposition \ref{Proposition3.1} below). This in some sense explains that the 2-D problem is a critical problem. We shall also mention that the proof of the uniqueness of solution to \eqref{Model2} relies on a Lagrangian approach \cite{DM CPAM 2012,DM ARMA 2013,Danchin AIF 2014}. Finally, if the viscosity coefficient $\mu(\rho)=\widetilde{\mu}(a)\equiv1$, then the systems \eqref{Model1} and \eqref{Model2} turn respectively into
\begin{eqnarray}\label{Modelrho}
\left\{\begin{aligned}
&\partial_t\rho+u\cdot\nabla\rho=0,\\
&\rho(\partial_t u+u\cdot\nabla u)-\Delta u+\nabla\Pi=0,\\
&\diverg u=0,\\
&(\rho,u)|_{t=0}=(\rho_0,u_0),
\end{aligned}\right.
\end{eqnarray}
and
\begin{eqnarray}\label{Modela}
\left\{\begin{aligned}
&\partial_t a+u\cdot\nabla a =0,\\
&\partial_t u+u\cdot\nabla u-(1+a)\Delta u+(1+a)\nabla\Pi=0,\\
&\diverg u=0,\\
&(a,u)|_{t=0}=(a_0,u_0).
\end{aligned}\right.
\end{eqnarray}
Notice that $u_0\in\dot{B}_{p,1}^{\frac{2}{p}-1}(\R^2)$ is not of finite energy for $p\in(2,4)$. For this, we set $\bar{u}\stackrel{\mathrm{def}}{=}u-u_F$ with $u_F(t)\stackrel{\mathrm{def}}{=}e^{(t-t_1)\Delta}u(t_1)$. Then we deduce from \eqref{Modelrho} that $(\rho,\bar{u},\nabla\Pi)$ solves
\begin{eqnarray}\label{Modelbar}
\left\{\begin{aligned}
&\partial_t\rho+\diverg(\rho(u_F+\bar{u}))=0,\\
&\rho\big(\partial_t\bar{u}+(u_F+\bar{u})\cdot\nabla\bar{u}\big)-\Delta\bar{u}+\nabla\Pi=G,\\
&\diverg \bar{u}=0,\\
&\rho|_{t=t_1}=\rho(t_1),\quad\bar{u}|_{t=t_1}=0
\end{aligned}\right.
\end{eqnarray}
with $G\stackrel{\rm{def}}{=}(1-\rho)\Delta u_F-\rho u_F\cdot\nabla u_F-\rho\bar{u}\cdot\nabla u_F$. By using the method in \cite{AGZ ARMA 2012,AGZ JMPA 2013}, we could present the $L^2$ energy estimate for $\bar{u}$ to prove the global well-posedness part of Theorem \ref{Theorem1.1}.

The remainder of this paper is organized as follows. In Section 2, we present some basic facts on Littlewood--Paley analysis and introduce several technical lemmas, then we present the estimates to the free transport equation. In Section 3, we study some linear elliptic and parabolic equations with rough coefficients in the framework of critical Besov spaces. In Section 4, we complete the proof of the local well-posedness part of Theorem \ref{Theorem1.1}. In Section 5, we present the energy estimate for $\bar{u}$ in the $L^2$ framework to prove the global well-posedness of \eqref{Model2} with $\mu(\rho)\equiv1$.

\bigskip

\noindent{\rm\bf Notations:}
For two operators $A$ and $B$, we denote $[A,B]=AB-BA$ the commutator between $A$ and $B$. The letter $C$ stands for a generic constant whose meaning is clear from the context. We sometimes write $a\lesssim b$ instead of $a\leq Cb$. For $p\in[1,\infty]$, the conjugate index $p'$ is determined by $\frac1p+\frac{1}{p'}=1$. The Fourier transform of $u$ is denoted either by $\hat{u}$ or $\mathscr{F}u$, the inverse by $\mathscr{F}^{-1}u$. The notation $\mathcal{P}$ stands for the Leray projector on the divergence free vector fields, while $\mathcal{Q}=\rm{Id}-\mathcal{P}$ stands for the projector on the gradient type vector fields.

For $X$ a Banach space and $I$ an interval of $\mathbb{R}$, we denote by $C(I;X)$ the set of continuous functions on $I$ with values in $X$.
For $q\in [1,+\infty]$, $L^q(I;X)$ stands for the set of measurable functions on $I$ with values in $X$, such that $t\mapsto\|f(t)\|_{X}$ belongs to $L^q(I)$.
For short, we sometimes write $L_T^q(X)$ instead of $L^q((0,T);X)$.

\section{\large\bf Preliminaries}
We first recall some basic facts on Littlewood-Paley theory (see \cite{BCD} for instance).
Let $\chi,\varphi$ be two smooth radial functions valued in the interval [0,1],
the support of $\chi$ be the ball $\mathscr{B}=\{\xi\in\R^2: |\xi|\leq\frac{4}{3}\}$
while the support of $\varphi$ be the annulus $\mathscr{C}=\{\xi\in\R^2: \frac{3}{4}\leq|\xi|\leq\frac{8}{3}\}$, and satisfy
\begin{align*}
\sum_{j\in\mathbb{Z}}\varphi(2^{-j}\xi)=&1\quad  \rm{for} \quad \xi\in\R^2\setminus\{0\};\\
\chi(\xi)+\sum_{j\geq 0}\varphi(2^{-j}\xi)=&1\quad  \rm{for} \quad \xi\in\R^2.
\end{align*}
Denote by $h\stackrel{\mathrm{def}}{=}\mathscr{F}^{-1}\varphi$ and $\widetilde{h}\stackrel{\mathrm{def}}{=}\mathscr{F}^{-1}\chi$,
the homogeneous dyadic blocks $\dot{\Delta}_{j}$ and the homogeneous low-frequency cutoff operators
$\dot{S}_{j}$ are defined for all $j\in\mathbb{Z}$ by
\begin{align*}
\dot{\Delta}_{j}u=&\varphi(2^{-j}D)u=2^{2j}\int_{\mathbb{R}^2}h(2^{j}y)u(x-y)dy,\\
\dot{S}_{j}u=&\chi(2^{-j}D)u=2^{2j}\int_{\mathbb{R}^2}\widetilde{h}(2^{j}y)u(x-y)dy.
\end{align*}
Denote by $\mathscr{S}_h^{'}(\R^2)$ the space of tempered distributions $u$ such that
$$
\lim_{j\rightarrow -\infty}\dot{S}_j u=0\ \ in\ \ \mathscr{S}^{'}(\R^2).
$$
Then we have the formal decomposition
$$
u=\sum_{ j\in \mathbb{Z} }\dot{ \Delta }_{j}u,\ \ \  \forall u\in\mathscr{S}_h^{'}(\mathbb{R}^2).
$$
Moreover, the Littlewood-Paley decomposition satisfies the property of almost orthogonality:
$$
\dot{\Delta}_{k}\dot{\Delta}_{j}u\equiv0\ \ \mathrm{if}\ \ |k-j|\geq2,\ \
\mathrm{and}\ \ \dot{\Delta}_{k}(\dot{S}_{j-1}u\dot{\Delta}_{j}v)\equiv 0\ \ \mathrm{if}\ \ |k-j|\geq5.
$$

Now we recall the definition of homogeneous Besov spaces from \cite{BCD}.
\begin{Definition}
Let $s\in\R$ and $1\leq p,r\leq\infty$. The homogeneous Besov space $\dot{B}_{p,r}^{s}(\R^2)$
consists of all the distributions $u$ in $\mathscr{S}_{h}^{'}(\R^2)$ such that
$$
\|u\|_{\dot{B}_{p,r}^{s}}\stackrel{\mathrm{def}}{=}\left\|\big(2^{js}\|\dot{\Delta}_{j}u\|_{L^{p}}\big)_{j\in\mathbb{Z}}\right\|_{l^{r}}
<\infty.
$$
\begin{Remark}
With some slight modifications, we can also define inhomogeneous Besov spaces.
Indeed, for $u\in\mathscr{S}'(\R^2)$, we define
\begin{align*}
\Delta_ju=\dot{\Delta}_{j}u,\ \forall j\geq0;\ \ \ \Delta_{-1}u=&\dot{S}_{0}u;\ \ \ \Delta_ju=0,\ \forall j\leq -2;\\
\mathrm{and}\ \ S_{j}u=&\sum_{j'\leq j-1}\Delta_{j'}u.
\end{align*}
Then the inhomogeneous Besov space $B_{p,r}^{s}(\R^2)$
consists of all the distributions $u$ in $\mathscr{S}^{'}(\R^2)$ such that
$$
\|u\|_{B_{p,r}^{s}}\stackrel{\mathrm{def}}{=}\left\|\big(2^{js}\|\Delta_{j}u\|_{L^{p}}\big)_{j\geq-1}\right\|_{l^{r}}
<\infty.
$$
\end{Remark}
\begin{Remark}
Let $s\in\mathbb{R}$ and $1\leq p,r\leq\infty$. Then there exists a positive constant $C$ such that $u$ belongs to $\dot{B}_{p,r}^{s}(\mathbb{R}^2)$ if and only if there exists $\{c_{j,r}\}_{j\in\mathbb{Z}}$ such that $c_{j,r}\geq0$, $\|c_{j,r}\|_{l^{r}}=1$ and
$$
\|\dot{\Delta}_{j}u\|_{L^p}\leq Cc_{j,r}2^{-js}\|u\|_{\dot{B}_{p,r}^{s}},\quad \forall j\in\mathbb{Z}.
$$
For simplicity, we denote $d_j\stackrel{\mathrm{def}}{=}c_{j,1}$ and $c_j\stackrel{\mathrm{def}}{=}c_{j,2}$.
\end{Remark}
\end{Definition}

To gain a better description of the regularization effect to the transport-diffusion equation,
we should use the Chemin-Lerner type norms(see \cite{BCD}):
\begin{Definition}
Let $s\in\mathbb{R}$ and $0<T\leq +\infty$. We define
$$
\|u\|_{\widetilde{L}_{T}^{\sigma}(\dot{B}_{p,r}^{s})}\stackrel{\mathrm{def}}{=}
\left(
\sum_{j\in\mathbb{Z}}
2^{jrs}\left(\int_{0}^{T}\|\dot{\Delta}_{j}u(t)\|_{L^p}^{\sigma}dt\right)^{\frac{r}{\sigma}}
\right)^{\frac{1}{r}}
$$
for $p\in[1,\infty]$, $r, \sigma \in [1,\infty)$,
and with the standard modification for $r=\infty$ or $\sigma=\infty$.
\end{Definition}

The following lemmas will be repeatedly used in this paper (see \cite{BCD}).
\begin{Lemma}\label{Bernstein Lemma}
Let $\mathscr{C}\subset\R^2$ be an annulus and $\mathscr{B}\subset\R^2$ be a ball.
There exists a positive constant $C$ such that for any $0\leq k\in\mathbb{Z}$,
any $\lambda>0$, any smooth homogeneous function $\sigma$ of degree m, any $1\leq p\leq q\leq\infty$, and any function $u\in L^{p}$, we have
\begin{align*}
\supp\widehat{u}\subset\lambda\mathscr{B}\Longrightarrow&
\|D^{k}u\|_{L^{q}}\stackrel{\mathrm{def}}{=}\sum_{|\alpha|=k}\|\partial^{\alpha}u\|_{L^{q}}\leq C^{k+1}\lambda^{k+2(\frac{1}{p}-\frac{1}{q})}\|u\|_{L^{p}},\\
\supp\widehat{u}\subset\lambda\mathscr{C}\Longrightarrow&
C^{-k-1}\lambda^{k}\|u\|_{L^{p}}\leq\|D^{k}u\|_{L^{p}}\leq C^{k+1}\lambda^{k}\|u\|_{L^{p}},\\
\supp\widehat{u}\subset\lambda\mathscr{C}\Longrightarrow&\|\sigma(D)u\|_{L^q}\le C_{\sigma,m}\lambda^{m+2(\frac1p-\frac1q)}\|u\|_{L^p}.
\end{align*}
\end{Lemma}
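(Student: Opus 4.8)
The plan is the following; the statement is classical (see \cite{BCD}), so I only sketch the scheme and highlight where the one delicate point lies. All three inequalities reduce, via the dilation $v(x)\stackrel{\mathrm{def}}{=}u(\lambda^{-1}x)$, to the case $\lambda=1$: the powers $\lambda^{k}$, $\lambda^{m}$ and $\lambda^{2(\frac1p-\frac1q)}$ are exactly what the change of the symbol and of the $L^{p}$, $L^{q}$ norms produce under this scaling. The common mechanism is that if $\widehat u$ is supported in a fixed compact set $K$ and $\theta\in C_c^\infty(\R^2)$ equals $1$ on $K$, then $u=\mathscr{F}^{-1}\theta*u$; differentiating this kernel, or multiplying $\theta$ by a symbol, rewrites the quantity to be estimated as a convolution of a \emph{fixed} Schwartz kernel against $u$ (or against $D^{k}u$, for the reverse bound), and Young's convolution inequality then closes the estimate.

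First I would dispatch the two ``forward'' inequalities. For the third one, pick $\widetilde\varphi\in C_c^\infty(\R^2)$ with $\widetilde\varphi\equiv1$ on $\mathscr{C}$ and supported in a slightly larger annulus, and set $\psi\stackrel{\mathrm{def}}{=}\sigma\,\widetilde\varphi$. Since $\sigma$ is smooth and homogeneous of degree $m$ on $\R^2\setminus\{0\}$ and $\widetilde\varphi$ vanishes near the origin, $\psi$ is smooth and compactly supported, so $g\stackrel{\mathrm{def}}{=}\mathscr{F}^{-1}\psi\in\mathscr{S}(\R^2)$. When $\supp\widehat u\subset\mathscr{C}$ one has $\widehat{\sigma(D)u}=\psi\,\widehat u$, hence $\sigma(D)u=g*u$, and Young's inequality with $\frac1r=1+\frac1q-\frac1p$ gives $\|\sigma(D)u\|_{L^q}\le\|g\|_{L^r}\|u\|_{L^p}$; taking $C_{\sigma,m}=\|g\|_{L^r}<\infty$ and undoing the rescaling yields the claim. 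The first inequality is the same argument with the annulus replaced by the ball $\mathscr{B}$ and $\psi$ replaced by $(i\xi)^{\alpha}\chi_0$, where $\chi_0\in C_c^\infty$ equals $1$ on $\mathscr{B}$: then $\partial^{\alpha}u=(\partial^{\alpha}\mathscr{F}^{-1}\chi_0)*u$, so $\|\partial^{\alpha}u\|_{L^q}\le\|\partial^{\alpha}\mathscr{F}^{-1}\chi_0\|_{L^r}\|u\|_{L^p}$, and summing over the $k+1$ multi-indices $|\alpha|=k$ (in two dimensions) produces the first line.

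For the reverse inequality I would exploit the algebraic identity $|\xi|^{2k}=\sum_{|\alpha|=k}\binom{k}{\alpha}\xi^{2\alpha}$: on $\supp\widehat u\subset\mathscr{C}$, where $\widetilde\varphi\equiv1$, this lets one write $\widehat u=(-1)^{k}\sum_{|\alpha|=k}\binom{k}{\alpha}g_\alpha\,\widehat{\partial^{\alpha}u}$ with $g_\alpha(\xi)\stackrel{\mathrm{def}}{=}|\xi|^{-2k}(i\xi)^{\alpha}\widetilde\varphi(\xi)$ smooth and compactly supported. Hence $u=(-1)^{k}\sum_{|\alpha|=k}\binom{k}{\alpha}(\mathscr{F}^{-1}g_\alpha)*\partial^{\alpha}u$, and Young's inequality ($L^1*L^p\hookrightarrow L^p$) together with the rescaling gives $\lambda^{k}\|u\|_{L^p}\le\big(\sum_{|\alpha|=k}\binom{k}{\alpha}\|\mathscr{F}^{-1}g_\alpha\|_{L^1}\big)\|D^{k}u\|_{L^p}$.

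The one genuinely delicate point — which I expect to be the main obstacle — is producing the constants in the \emph{precise} form $C^{k+1}$ with $C$ independent of $k$, rather than an unspecified $C_k$. Since $\sum_{|\alpha|=k}\binom{k}{\alpha}=2^{k}$ and there are only $k+1$ multi-indices of length $k$, it suffices to bound $\|\mathscr{F}^{-1}g_\alpha\|_{L^1}$ and $\|\partial^{\alpha}\mathscr{F}^{-1}\chi_0\|_{L^r}$ by $C^{k+1}$ uniformly in $\alpha$. For this I would use the elementary decay estimate $\|\mathscr{F}^{-1}g\|_{L^1}\lesssim_N\|(1-\Delta_\xi)^{N}g\|_{L^\infty}$, valid for $g$ supported in a fixed compact set and any fixed integer $N$ (here $N=2$ is enough in $\R^2$). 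On the fixed annulus $\{\tfrac34\le|\xi|\le\tfrac83\}$, Leibniz's rule shows that applying $N$ derivatives to $|\xi|^{-2k}(i\xi)^{\alpha}\widetilde\varphi(\xi)$ costs at most a polynomial in $k$ of degree $N$ times a geometric factor $C^{k}$ (from the $2k+N$ powers of $|\xi|^{\pm1}$ and from $|\xi^{\alpha}|\le(\tfrac83)^{k}$), which for fixed $N$ is $\le C_N^{k+1}$; the same bookkeeping gives $\|(i\xi)^{\alpha}\chi_0\|_{C^{N}}\le C_N^{k+1}$ on a fixed ball, whence $\|\partial^{\alpha}\mathscr{F}^{-1}\chi_0\|_{L^r}\le\|\partial^{\alpha}\mathscr{F}^{-1}\chi_0\|_{L^1}^{1/r}\|\partial^{\alpha}\mathscr{F}^{-1}\chi_0\|_{L^\infty}^{1-1/r}\le C^{k+1}$. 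Multiplying out the $2^{k}$, the $k+1$, and these $C^{k+1}$ factors and enlarging $C$ once, everything collapses to a bound of the announced shape. Apart from this exponential-in-$k$ bookkeeping, the proof is nothing but rescaling and Young's inequality.
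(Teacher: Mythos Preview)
Your sketch is correct and follows the classical scaling-plus-Young's-inequality scheme that is standard for Bernstein's lemma, including the careful bookkeeping needed to obtain constants of the form $C^{k+1}$. Note, however, that the paper does not give its own proof of this lemma: it is stated without argument and simply referred to \cite{BCD}, so there is no in-paper proof to compare against; your approach is precisely the one developed in that reference.
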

\begin{Lemma}\label{Heatflow}
Let $\mathscr{C}\subset\R^2$ be an annulus. Then there exist positive constants $c$ and $C$, such that for any $1\leq p\leq\infty$ and $\lambda>0$, we have
$$\supp\hat{u}\subset \lambda\mathscr{C}\Rightarrow\|e^{t\Delta}u\|_{L^p}\leq Ce^{-ct\lambda^2}\|u\|_{L^p}.$$
\end{Lemma}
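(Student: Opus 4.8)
The plan is to realize the frequency-localized heat flow as a convolution against a kernel whose $L^1$ norm decays exponentially in $t\lambda^2$, and then conclude by Young's inequality. First I would fix once and for all a smooth, compactly supported function $\phi$ equal to $1$ on a neighbourhood of $\mathscr{C}$ and supported in a slightly larger annulus $\widetilde{\mathscr{C}}\subset\R^2\setminus\{0\}$. Since $\supp\hat{u}\subset\lambda\mathscr{C}$ we have $\phi(\xi/\lambda)\hat{u}(\xi)=\hat{u}(\xi)$, hence $e^{t\Delta}u=g_{t,\lambda}*u$ with $g_{t,\lambda}\stackrel{\mathrm{def}}{=}\mathscr{F}^{-1}\big(e^{-t|\cdot|^2}\phi(\cdot/\lambda)\big)$, so that $\|e^{t\Delta}u\|_{L^p}\leq\|g_{t,\lambda}\|_{L^1}\|u\|_{L^p}$ for every $p\in[1,\infty]$. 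The change of variables $\xi=\lambda\eta$ gives $g_{t,\lambda}(x)=\lambda^2 G(t\lambda^2,\lambda x)$, where
$$
G(s,y)\stackrel{\mathrm{def}}{=}\frac{1}{(2\pi)^2}\int_{\R^2}e^{iy\cdot\eta}\,e^{-s|\eta|^2}\phi(\eta)\,d\eta,
$$
and a further change of variables shows $\|g_{t,\lambda}\|_{L^1(\R^2)}=\|G(t\lambda^2,\cdot)\|_{L^1(\R^2)}$. Thus the whole lemma reduces to the uniform bound $\|G(s,\cdot)\|_{L^1(\R^2)}\leq Ce^{-cs}$ for all $s>0$, after which the choice $s=t\lambda^2$ finishes the proof (note that $C$ and $c$ depend only on $\phi$, hence only on $\mathscr{C}$, and not on $p$ or $\lambda$).

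To estimate $\|G(s,\cdot)\|_{L^1}$ I would manufacture polynomial decay in $y$ by integrating by parts in $\eta$. Since the ambient dimension is $2$, I would use $(1+|y|^2)^2 e^{iy\cdot\eta}=(1-\Delta_\eta)^2 e^{iy\cdot\eta}$ and move the derivatives onto $e^{-s|\eta|^2}\phi(\eta)$ (no boundary terms appear, $\phi$ being compactly supported), obtaining
$$
(1+|y|^2)^2 G(s,y)=\frac{1}{(2\pi)^2}\int_{\R^2}e^{iy\cdot\eta}\,(1-\Delta_\eta)^2\big(e^{-s|\eta|^2}\phi(\eta)\big)\,d\eta.
$$
Because $(1+|y|^2)^{-2}\in L^1(\R^2)$, it then suffices to bound the right-hand side in $L^\infty_y$; and since the integrand is supported in the fixed annulus $\widetilde{\mathscr{C}}$, it is in turn enough to control $\big\|(1-\Delta_\eta)^2\big(e^{-s|\eta|^2}\phi\big)\big\|_{L^\infty_\eta}$.

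The one point that genuinely requires care is the dependence on $s$, since differentiating the Gaussian produces growing powers of $s$. By the Leibniz rule, $(1-\Delta_\eta)^2\big(e^{-s|\eta|^2}\phi(\eta)\big)$ is a finite sum of terms $\partial^\alpha\big(e^{-s|\eta|^2}\big)\,\partial^\beta\phi(\eta)$ with multi-indices $|\alpha|\leq 4$, and each $\partial^\alpha\big(e^{-s|\eta|^2}\big)$ equals a polynomial in $s$ and $\eta$ of $s$-degree at most $|\alpha|$, times $e^{-s|\eta|^2}$. On $\widetilde{\mathscr{C}}$ one has $|\eta|\geq c_1>0$, so these terms are dominated by $C(1+s^4)e^{-c_1^2 s}\leq C'e^{-c_1^2 s/2}$, the polynomial factor being absorbed into the exponential decay coming from the spectral gap $|\eta|\geq c_1$. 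This gives $\|G(s,\cdot)\|_{L^1}\leq Ce^{-c_1^2 s/2}$, and hence the lemma with $c=c_1^2/2$. Apart from this absorption step the argument is entirely routine and is, of course, classical (see \cite{BCD}).
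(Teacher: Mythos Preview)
Your proof is correct and follows the classical argument; the paper does not give its own proof of this lemma but simply cites \cite{BCD}, and your approach---writing the localized heat flow as convolution against $\mathscr{F}^{-1}\big(e^{-t|\cdot|^2}\phi(\cdot/\lambda)\big)$, rescaling to reduce to $\lambda=1$, and integrating by parts against $(1-\Delta_\eta)^2$ to trade $y$-decay for $\eta$-derivatives---is exactly the standard one found there. Nothing further is needed.
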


On the other hand, it has been demonstrated that the Bony's decomposition \cite{BCD,Bony} is
very effective to deal with nonlinear problems.
Here, we recall the Bony's decomposition in the homogeneous context:
$$
uv=\dot{T}_{u}v+\dot{T}_{v}u+\dot{R}(u,v)=\dot{T}_{u}v+\dot{T}'_{v}u,
$$
where
\begin{align*}
\dot{T}_{u}v\stackrel{\mathrm{def}}{=}\sum_{j\in\mathbb{Z}}\dot{S}_{j-1}u\dot{\Delta}_{j}v ,\ \ &\dot{R}(u,v)\stackrel{\mathrm{def}}{=}\sum_{j\in\mathbb{Z}}\dot{\Delta}_{j}u\widetilde{\dot{\Delta}}_{j}v \ \ \mathrm{with}\ \ \widetilde{\dot{\Delta}}_{j}v\stackrel{\mathrm{def}}{=}\sum_{|j^{'}-j|\leq1}\dot{\Delta}_{j^{'}}v,\\
\mathrm{and}\ \ &\dot{T}'_{v}u\stackrel{\mathrm{def}}{=}\dot{T}_{v}u+\dot{R}(u,v)=\sum_{j\in\mathbb{Z}}\dot{\Delta}_{j}u\dot{S}_{j+2}v.
\end{align*}

In the sequel, we should frequently use the following product laws \cite{XLC}.
\begin{Lemma}\label{Product Laws}
Let $1\leq p, q\leq \infty$, $s_1\leq \frac{2}{q}$, $s_2\leq 2\min\{\frac1p,\frac1q\}$ and $s_1+s_2>2\max\{0,\frac1p +\frac1q -1\}$.
Then there holds
\begin{align*}
\|ab\|_{\dot{B}_{p,1}^{s_1+s_2 -\frac{2}{q}}}\lesssim \|a\|_{\dot{B}_{q,1}^{s_1}}\|b\|_{\dot{B}_{p,1}^{s_2}},\ \
\forall (a,b)\in\dot{B}_{q,1}^{s_1}(\R^2)\times\dot{B}_{p,1}^{s_2}(\R^2).
\end{align*}
\begin{Remark} We shall frequently use the fact that $\dot{B}_{p,1}^{\frac2p}(\R^2)$ is an algebra for $p\in[1,\infty)$ and that $\|ab\|_{\dot{B}_{p,1}^{\frac{2}{p}-1}}\lesssim \|a\|_{\dot{B}_{p,1}^{\frac{2}{p}}}\|b\|_{\dot{B}_{p,1}^{\frac{2}{p}-1}}$ for $p\in[1,4)$.
\end{Remark}
\end{Lemma}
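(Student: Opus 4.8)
The plan is the classical one for product estimates in Besov spaces. Decompose $ab$ by Bony's formula, $ab=\dot T_a b+\dot T_b a+\dot R(a,b)$, and estimate each of the three pieces in $\dot B_{p,1}^{s_1+s_2-\frac2q}(\R^2)$. The organizing principle is that the two paraproducts are controlled using \emph{only} the upper restrictions $s_1\le\frac2q$ and $s_2\le2\min\{\frac1p,\frac1q\}$ (which guarantee that the low-frequency factor appearing in each paraproduct lies in a suitable Lebesgue space), whereas the lower bound $s_1+s_2>2\max\{0,\frac1p+\frac1q-1\}$ is needed only to sum the series defining the remainder $\dot R(a,b)$. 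Throughout one relies on Lemma~\ref{Bernstein Lemma}, Hölder's inequality, the almost-orthogonality relations recalled after the Littlewood--Paley decomposition, and the elementary characterization $\|\dot\Delta_j u\|_{L^p}\le C\,d_j 2^{-js}\|u\|_{\dot B_{p,1}^s}$ with $(d_j)_{j\in\mathbb Z}\in\ell^1$ of unit norm; each pointwise-in-$k$ bound is then summed over $k\in\mathbb Z$ in $\ell^1$.

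For $\dot T_a b=\sum_j\dot S_{j-1}a\,\dot\Delta_j b$, every summand has spectrum in an annulus of size $2^j$, so $\dot\Delta_k(\dot T_a b)$ collects only the indices $j$ with $|j-k|\le N_0$; Hölder gives $\|\dot\Delta_k(\dot T_a b)\|_{L^p}\lesssim\sum_{|j-k|\le N_0}\|\dot S_{j-1}a\|_{L^\infty}\|\dot\Delta_j b\|_{L^p}$, and because $s_1\le\frac2q$ one has $\|\dot S_{j-1}a\|_{L^\infty}\lesssim 2^{j(\frac2q-s_1)}\|a\|_{\dot B_{q,1}^{s_1}}$ (simply bounded by $\|a\|_{\dot B_{q,1}^{2/q}}$ in the equality case). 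Inserting $\|\dot\Delta_j b\|_{L^p}\lesssim d_j 2^{-js_2}\|b\|_{\dot B_{p,1}^{s_2}}$ and multiplying by $2^{k(s_1+s_2-\frac2q)}$ reduces matters to summing the finite-band convolution $\sum_{|j-k|\le N_0}2^{(k-j)(s_1+s_2-\frac2q)}d_j$ over $k$, which is harmless whatever the sign of $s_1+s_2-\frac2q$. The term $\dot T_b a$ is handled the same way, except that $\dot\Delta_j a$ must be moved into $L^p$: if $p\ge q$ this is Bernstein, $\|\dot\Delta_j a\|_{L^p}\lesssim 2^{2j(\frac1q-\frac1p)}\|\dot\Delta_j a\|_{L^q}$, and $\dot S_{j-1}b$ is placed in $L^\infty$ using $s_2\le\frac2p$; if $p<q$ one keeps $\dot\Delta_j a$ in $L^q$ and places $\dot S_{j-1}b$ in $L^r$ with $\frac1r=\frac1p-\frac1q$, using $s_2\le\frac2q$. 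In both cases the constraint is exactly $s_2\le2\min\{\frac1p,\frac1q\}$, and the bookkeeping once more produces a finite-band convolution.

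The remainder $\dot R(a,b)=\sum_j\dot\Delta_j a\,\widetilde{\dot\Delta}_j b$ is where the lower bound enters, and the regime $\frac1p+\frac1q>1$ is the main obstacle: now each summand has spectrum only in a \emph{ball} of size $2^j$, so $\dot\Delta_k(\dot R(a,b))=\sum_{j\ge k-N_0}\dot\Delta_k(\dot\Delta_j a\,\widetilde{\dot\Delta}_j b)$ runs over arbitrarily high frequencies, and the product $\dot\Delta_j a\,\widetilde{\dot\Delta}_j b$ need not lie in any $L^m$ with $m\ge1$. The remedy is to spend the spectral localization of $\dot\Delta_j a$: Bernstein gives $\|\dot\Delta_j a\|_{L^{p'}}\lesssim 2^{2j(\frac1p+\frac1q-1)}\|\dot\Delta_j a\|_{L^q}$ (legitimate precisely because $q\le p'$ when $\frac1p+\frac1q\ge1$), so that $\dot\Delta_j a\,\widetilde{\dot\Delta}_j b\in L^1$ and one further Bernstein inequality handles $\dot\Delta_k$; the $j$-series then carries the weight $2^{-j(s_1+s_2-2(\frac1p+\frac1q-1))}$, a one-sided discrete convolution against an $\ell^1$ sequence converges exactly when $s_1+s_2>2(\frac1p+\frac1q-1)$, and the powers of $2^k$ cancel to leave the exponent $-(s_1+s_2-\frac2q)$. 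When $\frac1p+\frac1q\le1$ the same scheme applies with $\dot\Delta_j a\,\widetilde{\dot\Delta}_j b$ estimated directly in $L^m$, $\frac1m=\frac1p+\frac1q$, and the summability requirement reduces to $s_1+s_2>0$; together these yield the stated condition $s_1+s_2>2\max\{0,\frac1p+\frac1q-1\}$. Adding the three contributions and summing in $\ell^1$ over $k$ completes the proof. Finally, the two assertions of the Remark are the specializations $q=p$, $(s_1,s_2)=(\tfrac2p,\tfrac2p)$ and $q=p$, $(s_1,s_2)=(\tfrac2p,\tfrac2p-1)$: in the first, the three hypotheses hold for every $p\in[1,\infty)$ and give that $\dot B_{p,1}^{2/p}(\R^2)$ is an algebra; in the second they hold exactly for $p\in[1,4)$ — the requirement $s_1+s_2=\tfrac4p-1>0$ forcing $p<4$ — and give the bound $\|ab\|_{\dot B_{p,1}^{2/p-1}}\lesssim\|a\|_{\dot B_{p,1}^{2/p}}\|b\|_{\dot B_{p,1}^{2/p-1}}$.
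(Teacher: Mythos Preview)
Your proof is correct and follows the standard Bony paraproduct strategy for such product laws. Note, however, that the paper does not actually prove this lemma: it is simply quoted from the reference \cite{XLC}, so there is no ``paper's own proof'' to compare against. Your argument --- bounding $\dot T_a b$ via $s_1\le\frac2q$, splitting $\dot T_b a$ into the cases $p\ge q$ and $p<q$ to exploit $s_2\le 2\min\{\frac1p,\frac1q\}$, and handling the remainder $\dot R(a,b)$ in the two regimes $\frac1p+\frac1q\lessgtr 1$ to explain the condition $s_1+s_2>2\max\{0,\frac1p+\frac1q-1\}$ --- is exactly the classical route (cf.\ Chapter~2 of \cite{BCD}), and your verification of the two specializations in the Remark is accurate.
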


Let us also recall the following commutator estimates (see \cite[Lemma 2.100]{BCD} for instance).
\begin{Lemma}\label{Commutator Estimates1}
Let $(p,q)\in[1,\infty]^2$, $-1-2\min\{\frac1p,\frac{1}{q'}\}<s\leq 1+2\min\{\frac1p,\frac1q\}$,
$a\in \dot{B}_{q,1}^{s}(\R^2)$ and $u\in \dot{B}_{p,1}^{\frac{2}{p}+1}(\R^2)$ with $\diverg u=0$.
Then there holds
$$
\|[u\cdot \nabla ,\dot{\Delta}_j]a\|_{L^q}\lesssim d_j 2^{-js}\|u\|_{\dot{B}_{p,1}^{\frac{2}{p}+1}}\|a\|_{\dot{B}_{q,1}^{s}}.
$$
\end{Lemma}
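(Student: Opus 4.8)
The plan is to follow the classical scheme for transport-type commutator estimates (cf.\ \cite[Lemma 2.100]{BCD}), combining Bony's decomposition with the divergence-free condition on $u$. Writing $u\cdot\nabla a=u^k\partial_k a$ (summation over $k$), Bony's decomposition gives $u^k\partial_k a=\dot{T}_{u^k}\partial_k a+\dot{T}_{\partial_k a}u^k+\dot{R}(u^k,\partial_k a)$; performing the same decomposition with $a$ replaced by $\dot{\Delta}_j a$ and subtracting, the commutator splits into five pieces,
\begin{align*}
[u\cdot\nabla,\dot{\Delta}_j]a&=[\dot{\Delta}_j,\dot{T}_{u^k}]\partial_k a+\dot{\Delta}_j(\dot{T}_{\partial_k a}u^k)-\dot{T}_{\partial_k\dot{\Delta}_j a}u^k\\
&\quad+\dot{\Delta}_j\dot{R}(u^k,\partial_k a)-\dot{R}(u^k,\partial_k\dot{\Delta}_j a)=:R^1_j+R^2_j+R^3_j+R^4_j+R^5_j.
\end{align*}
The identity $\partial_k u^k=0$ will be used freely to move derivatives onto exterior operators (for instance $\dot{R}(u^k,\partial_k a)=\partial_k\dot{R}(u^k,a)$, $\dot{T}_{\partial_k a}u^k=\partial_k\dot{T}_a u^k$, and $\partial_k(\dot{\Delta}_m u^k\,\dot{\Delta}_j a)=\dot{\Delta}_m u^k\,\partial_k\dot{\Delta}_j a$), so that Bernstein's inequality can absorb the derivatives. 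I will estimate each $R^\ell_j$ in $L^q$ and then read off the claimed bound using the characterization of $\dot{B}^s_{q,1}$ recalled in Section 2.

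The term $R^1_j=\sum_{|j'-j|\le4}[\dot{\Delta}_j,\dot{S}_{j'-1}u^k]\dot{\Delta}_{j'}\partial_k a$ is the genuine paraproduct commutator, which I would handle in the usual way: using the kernel representation $\dot{\Delta}_j w(x)=2^{2j}\int_{\R^2}h(2^j(x-y))w(y)\,dy$, the first-order Taylor formula
\[
\dot{S}_{j'-1}u^k(y)-\dot{S}_{j'-1}u^k(x)=(y-x)\cdot\int_0^1\big(\nabla\dot{S}_{j'-1}u^k\big)(x+\tau(y-x))\,d\tau,
\]
the bound $\|\nabla\dot{S}_{j'-1}u^k\|_{L^\infty}\lesssim\|\nabla u\|_{L^\infty}\lesssim\|u\|_{\dot{B}^{\frac2p+1}_{p,1}}$ coming from the embeddings $\dot{B}^{\frac2p+1}_{p,1}(\R^2)\hookrightarrow\dot{B}^{1}_{\infty,1}(\R^2)\hookrightarrow W^{1,\infty}(\R^2)$, the integrability of $z\mapsto|z|\,h(z)$, and Young's convolution inequality. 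The factor $2^{-j}$ thus produced absorbs the derivative, and after reindexing the finite sum over $j'$ one obtains $\|R^1_j\|_{L^q}\lesssim d_j2^{-js}\|u\|_{\dot{B}^{\frac2p+1}_{p,1}}\|a\|_{\dot{B}^s_{q,1}}$, with no restriction on $s$.

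The terms $R^2_j,\dots,R^5_j$ are treated along standard lines. After the rewriting above, each of them is either frequency-localized near $2^j$ (so that Bernstein's inequality, Lemma \ref{Bernstein Lemma}, turns the outer derivative into a factor $2^j$) or is a product of high-frequency blocks of $u^k$ with $\partial_k\dot{\Delta}_j a$; the low-frequency cutoffs $\dot{S}_{m-1}(\cdot)$ and the resulting dyadic products are estimated by H\"older's inequality, the embeddings $\dot{B}^{\frac2p+1}_{p,1}\hookrightarrow\dot{B}^{1}_{\infty,1}$ and $\dot{B}^s_{q,1}\hookrightarrow\dot{B}^{s-\frac2q}_{\infty,1}$, and suitable Bernstein inequalities; and the remaining sums over the internal frequency index are geometric. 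It is exactly the convergence of these geometric series that dictates the two-sided restriction on $s$: the upper bound $s\le1+2\min\{\frac1p,\frac1q\}$ is needed for the paraproduct term $R^2_j=\dot{\Delta}_j(\dot{T}_{\partial_k a}u^k)$, in which low frequencies of $\partial_k a$ are cut off, while the lower bound $s>-1-2\min\{\frac1p,\frac1{q'}\}$ is needed for the remainder terms $R^4_j,R^5_j$, whose summation runs over frequencies $\gtrsim2^j$. Each term again contributes $\lesssim d_j2^{-js}\|u\|_{\dot{B}^{\frac2p+1}_{p,1}}\|a\|_{\dot{B}^s_{q,1}}$, and summing the five estimates gives the lemma.

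I expect the only real difficulty to be bookkeeping: in each of the five terms one must identify which dyadic block carries the decay, verify that the summation over the internal index reproduces a generic summable sequence $(d_j)$ --- which is precisely where the hypothesis on $s$ is consumed --- and check that the powers of $2$ recombine into the single weight $2^{-js}$. Since $u$ is measured with two more derivatives than the scaling-critical index ($\frac2p+1$ versus $\frac2p-1$), there is ample room and no delicate endpoint occurs; the whole argument reduces to iterated use of Bernstein's inequality, the product and paraproduct continuity estimates, and the $\ell^1$ convolution inequality for summable sequences.
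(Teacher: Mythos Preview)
Your sketch is correct and follows precisely the standard scheme of \cite[Lemma~2.100]{BCD}. Note, however, that the paper does not supply its own proof of this lemma: it merely recalls the statement and refers to \cite{BCD} for the argument, so there is nothing to compare your proposal against beyond that reference, which it matches.
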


Motivated by \cite{AGZ ARMA 2012,AGZ JMPA 2013,Danchin CPDE 2007}, we need the following proposition to deal with the transport equation in \eqref{Model2}.
\begin{Proposition}\label{Proposition2.1}
Let $(p,q)\in[1,\infty]^2$, $\frac{1}{q}-\frac1p\leq\frac12$,
$a_{0}\in\dot{B}_{q,1}^{\frac{2}{q}}(\R^2)$ and $u\in L_{T}^{1}(\dot{B}_{p,1}^{\frac{2}{p}+1}(\R^2))$ with $\diverg u=0$.
If $a\in C([0,T];\dot{B}_{q,1}^{\frac{2}{q}}(\R^2))$ solves
\begin{align*}
\partial_{t}a+u\cdot\nabla a=0,\quad(t,x)\in(0,T]\times\R^{2}
\end{align*}
with initial data $a_{0}$, then there holds for $t\in[0, T]$
\begin{align}\label{transport estimate1}
\|a\|_{\widetilde{L}_t^{\infty}(\dot{B}_{q,1}^{\frac{2}{q}})}\leq\|a_0\|_{\dot{B}_{q,1}^{\frac{2}{q}}}e^{CU(t)},
\end{align}
and
\begin{align}\label{transport estimate2}
\|a-\dot{S}_ma\|_{\widetilde{L}_t^{\infty}(\dot{B}_{q,1}^{\frac{2}{q}})}
\leq\sum_{j\geq m}2^{\frac{2j}{q}}\|\dot{\Delta}_{j}a_0\|_{L^q}+\|a_0\|_{\dot{B}_{q,1}^{\frac{2}{q}}}\big(e^{CU(t)}-1\big),
\end{align}
with $U(t)\stackrel{\rm{def}}{=}\|u\|_{L_t^1(\dot{B}_{p,1}^{\frac{2}{p}+1})}$.
\end{Proposition}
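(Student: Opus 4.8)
The plan is to follow the classical strategy for transport estimates in Besov spaces (as in \cite{BCD}), refined so as to track the sharp constants and to isolate the high-frequency tail needed for \eqref{transport estimate2}. First I would apply $\dot\Delta_j$ to $\partial_t a+u\cdot\nabla a=0$, which yields
\[
\partial_t\dot\Delta_j a+u\cdot\nabla\dot\Delta_j a=[u\cdot\nabla,\dot\Delta_j]a .
\]
After a routine regularization (the case $q=\infty$ being a standard limiting argument), multiplying by $|\dot\Delta_j a|^{q-2}\dot\Delta_j a$, integrating over $\R^2$, and using $\diverg u=0$ to annihilate the transport contribution, one gets $\frac{d}{dt}\|\dot\Delta_j a(t)\|_{L^q}\le\|[u\cdot\nabla,\dot\Delta_j]a(t)\|_{L^q}$, hence after integration in time
\[
\|\dot\Delta_j a(t)\|_{L^q}\le\|\dot\Delta_j a_0\|_{L^q}+\int_0^t\|[u\cdot\nabla,\dot\Delta_j]a(\tau)\|_{L^q}\,d\tau .
\]

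To bound the commutator I would invoke Lemma \ref{Commutator Estimates1} with $s=\tfrac2q$. Its admissibility window $-1-2\min\{\tfrac1p,\tfrac1{q'}\}<\tfrac2q\le 1+2\min\{\tfrac1p,\tfrac1q\}$ holds precisely under the hypothesis $\tfrac1q-\tfrac1p\le\tfrac12$: the lower bound is automatic since $\tfrac2q>0$, while the upper bound is trivial when $p\le q$ and amounts exactly to $\tfrac1q-\tfrac1p\le\tfrac12$ when $p>q$. This gives, with a sequence $(d_j(\tau))_{j\in\mathbb Z}$ satisfying $\sum_j d_j(\tau)=1$,
\[
\|[u\cdot\nabla,\dot\Delta_j]a(\tau)\|_{L^q}\lesssim d_j(\tau)\,2^{-\frac{2j}{q}}\|u(\tau)\|_{\dot B_{p,1}^{\frac2p+1}}\|a(\tau)\|_{\dot B_{q,1}^{\frac2q}} .
\]

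For \eqref{transport estimate1} I would multiply the block estimate by $2^{\frac{2j}{q}}$, take the supremum over $s\in[0,t]$, and sum over $j\in\mathbb Z$; the $\ell^1$-normalization of $(d_j)$ yields $\|a\|_{\widetilde L_t^\infty(\dot B_{q,1}^{\frac2q})}\le\|a_0\|_{\dot B_{q,1}^{\frac2q}}+C\int_0^t\|u(\tau)\|_{\dot B_{p,1}^{\frac2p+1}}\|a(\tau)\|_{\dot B_{q,1}^{\frac2q}}\,d\tau$, and since $\|a(\tau)\|_{\dot B_{q,1}^{\frac2q}}\le\|a\|_{\widetilde L_\tau^\infty(\dot B_{q,1}^{\frac2q})}$, Grönwall's lemma closes the estimate. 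For \eqref{transport estimate2} I would instead sum only over $j\ge m$, use $a-\dot S_m a=\sum_{j\ge m}\dot\Delta_j a$ together with $\sum_{j\ge m}d_j(\tau)\le 1$, and then substitute the bound $\|a(\tau)\|_{\dot B_{q,1}^{\frac2q}}\le\|a_0\|_{\dot B_{q,1}^{\frac2q}}e^{CU(\tau)}$ just obtained; the resulting time integral is a total derivative,
\[
C\int_0^t\|u(\tau)\|_{\dot B_{p,1}^{\frac2p+1}}e^{CU(\tau)}\,d\tau=\int_0^t\frac{d}{d\tau}\big(e^{CU(\tau)}\big)\,d\tau=e^{CU(t)}-1 ,
\]
which is exactly the second term on the right-hand side of \eqref{transport estimate2}.

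The computation is otherwise standard; the only point I would treat carefully — and the place where the structural hypothesis of the proposition is used — is verifying that $s=\tfrac2q$ lies in the admissible range of Lemma \ref{Commutator Estimates1}, i.e. that $\tfrac1q-\tfrac1p\le\tfrac12$ guarantees $\tfrac2q\le 1+2\min\{\tfrac1p,\tfrac1q\}$. The regularization justifying the $L^q$ energy identity, the interchange of $\sup_t$ with the $\ell^1$ sum defining the Chemin–Lerner norm, and the application of Grönwall's inequality are all routine.
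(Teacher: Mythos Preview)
Your proposal is correct and follows essentially the same route as the paper: localize with $\dot\Delta_j$, use the divergence-free $L^q$ transport estimate on each block, bound the commutator via Lemma~\ref{Commutator Estimates1} at $s=\tfrac2q$ (which is where the hypothesis $\tfrac1q-\tfrac1p\le\tfrac12$ enters), sum and apply Gr\"onwall for \eqref{transport estimate1}, then feed \eqref{transport estimate1} back into the block inequality and sum over $j\ge m$ for \eqref{transport estimate2}. The paper's proof is identical in structure; the only cosmetic difference is that it invokes the ``classical $L^q$ estimate for transport equations'' in one line rather than writing out the energy computation, and it writes the final time integral as $\int_0^t d_j(\tau)U'(\tau)e^{CU(\tau)}\,d\tau$ before summing.
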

\begin{proof} We first get by applying $\dot{\Delta}_{j}$ to the transport equation
$$
(\partial_{t}+u\cdot\nabla)\dot{\Delta}_{j}a=[u\cdot\nabla ,\dot{\Delta}_{j}]a.
$$
Since $\nabla u\in L_T^1(L^\infty)$ with $\diverg u=0$, we get by using classical $L^q$ estimate for transport equation that
\begin{align*}
\|\dot{\Delta}_{j}a\|_{L_{t}^{\infty}(L^q)}\leq\|\dot{\Delta}_{j}a_0\|_{L^q}
+\int_0^t\|[u\cdot\nabla ,\dot{\Delta}_{j}]a\|_{L^{q}}d\tau,
\end{align*}
from which and Lemma \ref{Commutator Estimates1}, we get for $\frac{1}{q}-\frac1p\leq\frac12$
\begin{align}\label{teq1}
2^{\frac{2j}{q}}\|\dot{\Delta}_{j}a\|_{L_{t}^{\infty}(L^q)}\leq2^{\frac{2j}{q}}\|\dot{\Delta}_{j}a_0\|_{L^q}
+C\int_0^t d_{j}(\tau)\|u(\tau)\|_{\dot{B}_{p,1}^{\frac{2}{p}+1}} \|a(\tau)\|_{\dot{B}_{q,1}^{\frac{2}{q}}}d\tau.
\end{align}
Taking summation for $j\in\mathbb{Z}$ and then using Gronwall's inequality implies \eqref{transport estimate1}.
Then substituting \eqref{transport estimate1} into \eqref{teq1} results in
\begin{align*}
2^{\frac{2j}{q}}\|\dot{\Delta}_{j}a\|_{L_{t}^{\infty}(L^q)}\leq2^{\frac{2j}{q}}\|\dot{\Delta}_{j}a_0\|_{L^q}
+C\|a_0\|_{\dot{B}_{q,1}^{\frac{2}{q}}}\int_0^t d_{j}(\tau)U'(\tau)e^{CU(\tau)}d\tau.
\end{align*}
Summing up the above inequality on $\{j\geq m\}$ leads to \eqref{transport estimate2}.
\end{proof}
\begin{Remark} Let $p,q,U(t)$ be determined by Proposition \ref{Proposition2.1}.
Then in the framework of inhomogeneous Besov spaces, there similarly holds
\begin{align*}
\|a\|_{\widetilde{L}_t^{\infty}(B_{q,1}^{\frac{2}{q}})}\leq&\|a_0\|_{B_{q,1}^{\frac{2}{q}}}e^{CU(t)},\\
\|a-S_ma\|_{\widetilde{L}_t^{\infty}(B_{q,1}^{\frac{2}{q}})}
\leq&\sum_{j\geq m}2^{\frac{2j}{q}}\|\Delta_{j}a_0\|_{L^q}+\|a_0\|_{B_{q,1}^{\frac{2}{q}}}\big(e^{CU(t)}-1\big).
\end{align*}
\end{Remark}

\section{\bf Linear system with rough coefficients}
In this section, we study some linear elliptic and parabolic equations with rough coefficients in the $L^p$ framework. We need the following commutator estimates of integral type:
\begin{Lemma}\label{Lemma3.1}
\rm{(\romannumeral1)} Let $(p,q)\in(\frac{1+\sqrt{17}}{4},2]\times[1,\infty)$ with $\frac1p-\frac1q\leq\frac12$.
Then we have
\begin{align}\label{Ijpq}
I_j\stackrel{\rm{def}}{=}\int_{\R^2}\diverg\big([\dot{\Delta}_j,a]\nabla\Pi\big)\cdot|\dot{\Delta}_j\Pi|^{p-2}\dot{\Delta}_j\Pi dx
\lesssim d_j2^{j(2-\frac{2}{p})}\|a\|_{\dot{B}_{q,1}^{\frac{2}{q}}}\|\nabla\Pi\|_{L^2}\|\dot{\Delta}_j\Pi\|_{L^p}^{p-1}.
\end{align}

\rm{(\romannumeral2)} For $p\in(1,4)$, we alternatively have
\begin{align}
I_j\lesssim& d_j2^{j(2-\frac{2}{p})}\|a\|_{\dot{B}_{p,1}^{\frac{2}{p}}}\|\nabla\Pi\|_{L^2}\|\dot{\Delta}_j\Pi\|_{L^p}^{p-1},\ \ p\in(1,2),\label{Ijp12}\\
I_j\lesssim& d_j2^{j(2-\frac{2}{p})}\|a\|_{\dot{B}_{p,1}^{\frac{2}{p}}}\|\nabla\Pi\|_{\dot{B}_{p,2}^{\frac{2}{p}-1}}
\|\dot{\Delta}_j\Pi\|_{L^p}^{p-1},\ \ p\in[2,4).\label{Ijp24}
\end{align}
\end{Lemma}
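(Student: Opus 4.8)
The idea is to run an $L^p$ Littlewood--Paley energy argument and to reduce both parts to a single quantitative bound on the commutator $[\dot\Delta_j,a]\nabla\Pi$. One natural route: for $p\in[2,4)$ integrate by parts (the weight $|\dot\Delta_j\Pi|^{p-2}$ is a nonnegative power, so this is harmless), writing $I_j=-(p-1)\int_{\R^2}|\dot\Delta_j\Pi|^{p-2}[\dot\Delta_j,a]\nabla\Pi\cdot\nabla\dot\Delta_j\Pi\,dx$, and applying H\"older with exponents $(p,\tfrac{p}{p-2},p)$ together with Bernstein ($\|\nabla\dot\Delta_j\Pi\|_{L^p}\lesssim 2^j\|\dot\Delta_j\Pi\|_{L^p}$) to get $|I_j|\lesssim 2^j\|[\dot\Delta_j,a]\nabla\Pi\|_{L^p}\|\dot\Delta_j\Pi\|_{L^p}^{p-1}$; it then suffices to prove $\|[\dot\Delta_j,a]\nabla\Pi\|_{L^p}\lesssim d_j2^{j(1-2/p)}\|a\|_{\dot B_{p,1}^{2/p}}\|\nabla\Pi\|_{\dot B_{p,2}^{2/p-1}}$. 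For $p\in(1,2)$ the weight is singular, so I would instead keep the divergence and use H\"older directly, $|I_j|\le\|\diverg([\dot\Delta_j,a]\nabla\Pi)\|_{L^p}\|\dot\Delta_j\Pi\|_{L^p}^{p-1}$, reducing matters to $\|\diverg([\dot\Delta_j,a]\nabla\Pi)\|_{L^p}\lesssim d_j2^{j(2-2/p)}\|a\|\,\|\nabla\Pi\|_{L^2}$, with $\|a\|=\|a\|_{\dot B_{q,1}^{2/q}}$ in (i) and $\|a\|_{\dot B_{p,1}^{2/p}}$ in (ii). (At $p=2$ the two routes agree since $\dot B_{2,2}^{0}=L^2$.)

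The core is the Bony decomposition $[\dot\Delta_j,a]\nabla\Pi=[\dot\Delta_j,\dot T_a]\nabla\Pi+(\dot\Delta_j\dot T_{\nabla\Pi}a-\dot T_{\dot\Delta_j\nabla\Pi}a)+(\dot\Delta_j\dot R(a,\nabla\Pi)-\dot R(a,\dot\Delta_j\nabla\Pi))$. For $[\dot\Delta_j,\dot T_a]\nabla\Pi=\sum_{|k-j|\le4}[\dot\Delta_j,\dot S_{k-1}a]\dot\Delta_k\nabla\Pi$ I would write each commutator as a convolution against $2^{2j}h(2^j\cdot)$ and use a first-order Taylor expansion to gain the factor $2^{-j}\|\nabla\dot S_{k-1}a\|_{L^{r}}$; Bernstein ($L^q\hookrightarrow L^r$ with $\tfrac1r=\tfrac1p-\tfrac12$, whence the hypothesis $\tfrac1p-\tfrac1q\le\tfrac12$ in (i); or $L^p\hookrightarrow L^\infty$ in (ii)) pairs this with $\dot\Delta_k\nabla\Pi$ in $L^2$, resp. $L^p$, and closes this piece. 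The remainders $\dot\Delta_j\dot R(a,\nabla\Pi)$ and $\dot R(a,\dot\Delta_j\nabla\Pi)$ are handled by Bernstein on the low-frequency output of $\dot\Delta_j$: pushing $a$ into a high-integrability space $L^{p_1}$ with $p_1\ge p$ turns the sum over the high block $m\gtrsim j$ into a geometric series with exponent $1-2/p_0$ (where $\tfrac1{p_0}=\tfrac1{p_1}+\tfrac1{p_2}\le\tfrac2p$), which is negative exactly when $\tfrac2p>\tfrac12$, i.e. $p<4$ --- this is the origin of the upper endpoint in (ii). The pieces $\dot\Delta_j\dot T_{\nabla\Pi}a$ (spectrally near $2^j$) and $\dot T_{\dot\Delta_j\nabla\Pi}a$ on its transition frequencies $m\approx j$ are elementary.

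The one genuinely delicate term is $\dot T_{\dot\Delta_j\nabla\Pi}a$ on its high frequencies: up to finitely many transition pieces it equals $\dot\Delta_j\nabla\Pi\cdot(a-\dot S_{j+3}a)$. For $p\in[2,4)$ (the integration-by-parts route, with no outer $\diverg$) it is handled by the split $\|\dot\Delta_j\nabla\Pi\cdot(a-\dot S_{j+3}a)\|_{L^p}\le\|\dot\Delta_j\nabla\Pi\|_{L^{s'}}\|a-\dot S_{j+3}a\|_{L^{s}}$, trading the cheap low-frequency factor $\dot\Delta_j\nabla\Pi$ against the tail by Bernstein. For $p\in(1,2)$ the outer $\diverg$ falls partly on the high-frequency factor, producing $\sum_{m\ge j+3}\dot\Delta_j\nabla\Pi\cdot\nabla\dot\Delta_m a$; summing norms in $L^p$ would diverge (the exponent $2-2/p$ is positive), so one must exploit that these pieces are frequency-separated --- via the Littlewood--Paley square function, or by putting $\dot\Delta_j\nabla\Pi$ in a large $L^s$ (cheap, since it is low-frequency) and $\nabla\dot\Delta_m a$ in a small one so that $\sum_{m\ge j+3}$ converges. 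Tracking the admissible exponents together with $q\ge1$ and $\tfrac1p-\tfrac1q\le\tfrac12$ shows this is possible precisely when $2p^2-p-2>0$, i.e. $p>\tfrac{1+\sqrt{17}}{4}$: this is the lower endpoint in (i). In (ii), where $q=p$, one bypasses this by invoking the product law $\|ab\|_{\dot B_{p,1}^{2/p-1}}\lesssim\|a\|_{\dot B_{p,1}^{2/p}}\|b\|_{\dot B_{p,1}^{2/p-1}}$ ($p\in[1,4)$) applied to $\nabla\Pi$ in place of $b$ (after first obtaining $\nabla\Pi\in\dot B_{p,2}^{2/p-1}$ when $p\ge2$), which frees $p$ to run over all of $(1,2)$, resp. $[2,4)$.

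I expect this last term to be the main obstacle, together with the bookkeeping of the Littlewood--Paley coefficients: one must check that every sequence produced along the way (the $c_{j,1}$-type sequence of $a$, the $c_{j,2}$-type sequence of $\nabla\Pi$, and their shifts and convolutions with summable kernels) recombines --- using $\ell^1\cdot\ell^\infty\subset\ell^1$, $\ell^2\cdot\ell^2\subset\ell^1$, and the stability of $\ell^1$ under convolution with an $\ell^1$ kernel --- into a single $\ell^1$ sequence $\{d_j\}$, and that every Bernstein and product-law step is used within its allowed range of exponents; it is precisely this joint requirement that pins down the hypotheses on $(p,q)$ in (i) and (ii).
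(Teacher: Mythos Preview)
Your plan for part (ii) is essentially the paper's: for $p\in[2,4)$ you integrate by parts and bound $\|[\dot\Delta_j,a]\nabla\Pi\|_{L^p}$ via Bony's decomposition, and for $p\in(1,2)$ you keep the divergence, split via Bony, and on the dangerous high-frequency piece $\dot T'_{\dot\Delta_j\nabla\Pi}a$ you put $\dot\Delta_j\nabla\Pi$ in $L^\infty$ and $\nabla\dot\Delta_{j'}a$ in $L^p$, whence the sum $\sum_{j'\ge j}2^{j'(1-2/p)}d_{j'}$ converges because $1-2/p<0$. That is exactly what the paper does.

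The gap is in part (i). You explicitly renounce integration by parts for $p<2$ (``the weight is singular'') and commit to the reduction $|I_j|\le\|\diverg([\dot\Delta_j,a]\nabla\Pi)\|_{L^p}\|\dot\Delta_j\Pi\|_{L^p}^{p-1}$. But for the high-frequency tail $\sum_{j'\ge j}\nabla\dot\Delta_{j'}a\cdot\dot\Delta_j\nabla\Pi$ this $L^p$ bound is not available once $q\ge2$. Any H\"older split $\|\nabla\dot\Delta_{j'}a\|_{L^r}\|\dot\Delta_j\nabla\Pi\|_{L^s}$ with $r\ge q$ gives a geometric weight $2^{j'(1-2/r)}$ with $1-2/r\ge0$, so the sum over $j'\ge j$ diverges; replacing the sum by $\nabla(a-\dot S_ja)$ does not help, since $\nabla a\in\dot B_{q,1}^{2/q-1}$ has nonpositive regularity when $q\ge2$, and the square-function trick you allude to reproduces the same divergent $\ell^2$ sum. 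The hypothesis $\frac1p-\frac1q\le\frac12$ allows $q$ up to $\frac{2p}{2-p}$, which exceeds $2$ throughout the range, so this case cannot be avoided. In particular, your explanation of the threshold $p>\frac{1+\sqrt{17}}{4}$ as a constraint arising from your H\"older bookkeeping is incorrect: your route gives no such threshold, only the unrelated restriction $q<2$.

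The paper's device is precisely the integration by parts you discarded. For the tail $I_j^{3,2}$ it keeps the integral form and invokes Lemma~8 in Appendix~B of \cite{Danchin JDE 2010} (an approximation argument that legitimizes the formal identity $\int\diverg(b\,\dot\Delta_j\nabla\Pi)\,|\dot\Delta_j\Pi|^{p-2}\dot\Delta_j\Pi=-(p-1)\int b\,|\dot\Delta_j\nabla\Pi|^2|\dot\Delta_j\Pi|^{p-2}$ even when $p<2$), obtaining $I_j^{3,2}=(p-1)\sum_{j'\ge j}\int\dot\Delta_{j'}a\,|\dot\Delta_j\nabla\Pi|^2|\dot\Delta_j\Pi|^{p-2}\,dx$. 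It then bounds $\big\||\dot\Delta_j\nabla\Pi|^2|\dot\Delta_j\Pi|^{p-2}\big\|_{L^{q'}}$ by a \emph{second} integration by parts, which requires $(p-2)q'+1>0$, i.e.\ $q>\frac{1}{p-1}$. Combined with $q\le\frac{2p}{2-p}$, this forces $\frac{1}{p-1}<\frac{2p}{2-p}$, i.e.\ $2p^2-p-2>0$: that is where the threshold really comes from.
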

\begin{proof} \rm{(\romannumeral1)} Note that we could not directly use integration by parts. For this, we first get by using Bony's decomposition
\begin{align*}
I_j=&\int_{\R^2}\diverg\big([\dot{\Delta}_j,\dot{T}_{a}]\nabla\Pi\big)\cdot|\dot{\Delta}_j\Pi|^{p-2}\dot{\Delta}_j\Pi dx
+\int_{\R^2}\diverg\dot{\Delta}_j\big(\dot{T}'_{\nabla\Pi}a\big)\cdot|\dot{\Delta}_j\Pi|^{p-2}\dot{\Delta}_j\Pi dx\\
&-\int_{\R^2}\diverg\big(\dot{T}'_{\dot{\Delta}_j\nabla\Pi}a\big)\cdot|\dot{\Delta}_j\Pi|^{p-2}\dot{\Delta}_j\Pi dx
\stackrel{\rm{def}}{=}I_j^1+I_j^2+I_j^3.
\end{align*}
By the definition of Bony's decomposition, we have
\begin{align}\label{cmt}
[\dot{\Delta}_j,\dot{T}_{a}]\nabla\Pi
=-2^{2j}\sum_{|j'-j|\leq4}\int_{\R^2}h(2^jy)\dot{\Delta}_{j'}\nabla\Pi(x-y)dy\int_0^1y\cdot\nabla\dot{S}_{j'-1}a(x-\tau y)d\tau,
\end{align}
from which, we get by using H$\rm{\ddot{o}}$lder inequality and Lemma \ref{Bernstein Lemma} that
\begin{align*}
\|[\dot{\Delta}_j,\dot{T}_{a}]\nabla\Pi\|_{L^p}
\lesssim2^{-j}\sum_{|j'-j|\leq4}\|\nabla\dot{S}_{j'-1}a\|_{L^{\frac{2p}{2-p}}}\|\dot{\Delta}_{j'}\nabla\Pi\|_{L^2}
\lesssim d_j2^{j(1-\frac{2}{p})}\|a\|_{\dot{B}_{q,1}^{\frac{2}{q}}}\|\nabla\Pi\|_{L^2},
\end{align*}
where we used $\|\nabla\dot{S}_{j'-1}a\|_{L^{\frac{2p}{2-p}}}\lesssim d_{j'}2^{j'(2-\frac2p)}\|a\|_{\dot{B}_{q,1}^{\frac{2}{q}}}$
for $p>1$ and $\frac1p-\frac1q\leq\frac12$.
Note that $[\dot{\Delta}_j,\dot{T}_{a}]\nabla\Pi$ is spectrally supported in an annulus of size $2^j$. Whence we infer
\begin{align}\label{Ij1}
I_j^1\lesssim d_j2^{j(2-\frac{2}{p})}\|a\|_{\dot{B}_{q,1}^{\frac{2}{q}}}\|\nabla\Pi\|_{L^2}\|\dot{\Delta}_j\Pi\|_{L^p}^{p-1}.
\end{align}
Owing to the localization properties of the Littlewood-Paley decomposition, we have
\begin{align*}
\dot{\Delta}_j\big(\dot{T}'_{\nabla\Pi}a\big)=\sum_{j'\geq j-3}\dot{\Delta}_j\big(\dot{\Delta}_{j'}a\dot{S}_{j'+2}\nabla\Pi\big).
\end{align*}
If $q\geq2$, we denote $\frac1\gamma\stackrel{\rm{def}}{=}\frac12+\frac1q\geq\frac1p$ and apply Lemma \ref{Bernstein Lemma} to obtain
\begin{align*}
\|\dot{\Delta}_j\big(\dot{T}'_{\nabla\Pi}a\big)\|_{L^p}\lesssim2^{2j(\frac1\gamma-\frac1p)}\sum_{j'\geq j-3}\|\dot{\Delta}_{j'}a\|_{L^{q}}\|\dot{S}_{j'+2}\nabla\Pi\|_{L^2}
\lesssim d_j2^{j(1-\frac{2}{p})}\|a\|_{\dot{B}_{q,1}^{\frac{2}{q}}}\|\nabla\Pi\|_{L^2}.
\end{align*}
While if $q<2$, the embedding $\dot{B}_{q,1}^{\frac{2}{q}}(\R^2)\hookrightarrow\dot{B}_{2,1}^{1}(\R^2)$ ensures that the above inequality still holds.
Thus we obtain
\begin{align}\label{Ij2}
I_j^2\lesssim d_j2^{j(2-\frac{2}{p})}\|a\|_{\dot{B}_{q,1}^{\frac{2}{q}}}\|\nabla\Pi\|_{L^2}\|\dot{\Delta}_j\Pi\|_{L^p}^{p-1}.
\end{align}
For $I_j^3$, we write
\begin{align*}
\dot{T}'_{\dot{\Delta}_j\nabla\Pi}a=\sum_{j'-j=-1,-2}\dot{\Delta}_{j'}a\dot{S}_{j'+2}\dot{\Delta}_{j}\nabla\Pi
+\sum_{j'\geq j}\dot{\Delta}_{j'}a\dot{\Delta}_{j}\nabla\Pi,
\end{align*}
from which, we get by applying Lemma 8 in Appendix B of \cite{Danchin JDE 2010}
\begin{align*}
I_j^3=&-\sum_{j'-j=-1,-2}
\int_{\R^2}\diverg\big(\dot{\Delta}_{j'}a\dot{S}_{j'+2}\dot{\Delta}_{j}\nabla\Pi\big)\cdot|\dot{\Delta}_j\Pi|^{p-2}\dot{\Delta}_j\Pi dx\\
&+(p-1)\sum_{j'\geq j}\int_{\R^2}\dot{\Delta}_{j'}a|\dot{\Delta}_{j}\nabla\Pi|^2\cdot|\dot{\Delta}_j\Pi|^{p-2}dx
\stackrel{\rm{def}}{=}I_j^{3,1}+I_j^{3,2}.
\end{align*}
Then it is easy to observe for $\frac1p-\frac1q\leq\frac12$ that
\begin{align}
I_j^{3,1}
\lesssim&2^{j}\sum_{j'-j=-1,-2}\|\dot{\Delta}_{j'}a\|_{L^{\frac{2p}{2-p}}}\|\dot{\Delta}_{j}\nabla\Pi\|_{L^2}\|\dot{\Delta}_j\Pi\|_{L^p}^{p-1}\nonumber\\
\lesssim&d_j2^{j(2-\frac{2}{p})}\|a\|_{\dot{B}_{q,1}^{\frac{2}{q}}}\|\nabla\Pi\|_{L^2}\|\dot{\Delta}_j\Pi\|_{L^p}^{p-1}.
\end{align}
While the assumption $p\in(\frac{1+\sqrt{17}}{4},2]$ ensures that $\frac{1}{p-1}<\frac{2p}{2-p}$.
In the case when $\max\{p,\frac{1}{p-1}\}<q\leq\frac{2p}{2-p}$, we have $(p-2)q'+1>0$ so that we could use a similar approximate argument
as the proof of Lemma A.5 in the appendix of \cite{Danchin CPDE 2001} to obtain
\begin{align*}
&\big\||\dot{\Delta}_{j}\nabla\Pi|^2\cdot|\dot{\Delta}_j\Pi|^{p-2}\big\|_{L^{q'}}^{q'}\nonumber\\
=&\int_{\R^2}|\dot{\Delta}_j\Pi|^{(p-2)q'}\dot{\Delta}_{j}\nabla\Pi\cdot\dot{\Delta}_{j}\nabla\Pi|\dot{\Delta}_{j}\nabla\Pi|^{2q'-2}dx\nonumber\\
=&-\frac{1}{(p-2)q'+1}\int_{\R^2}|\dot{\Delta}_j\Pi|^{(p-2)q'}\dot{\Delta}_{j}\Pi
\cdot\diverg\big(\dot{\Delta}_{j}\nabla\Pi|\dot{\Delta}_{j}\nabla\Pi|^{2q'-2}\big)dx.
\end{align*}
Denoting $\frac1r\stackrel{\rm{def}}{=}\frac1p-\frac1q\leq\frac12$ and using H$\rm{\ddot{o}}$lder inequality and Lemma \ref{Bernstein Lemma} gives
\begin{align*}
&\big\||\dot{\Delta}_{j}\nabla\Pi|^2\cdot|\dot{\Delta}_j\Pi|^{p-2}\big\|_{L^{q'}}^{q'}\nonumber\\
\lesssim&\big\||\dot{\Delta}_j\Pi|^{(p-2)q'+1}\big\|_{L^{\frac{p}{(p-2)q'+1}}}
\big\||\dot{\Delta}_{j}\nabla\Pi|^{q'-1}\big\|_{L^{\frac{p}{q'-1}}}\big\||\dot{\Delta}_{j}\nabla\Pi|^{q'-1}\big\|_{L^{\frac{r}{q'-1}}}
\big\|\nabla^2\dot{\Delta}_{j}\Pi\big\|_{L^{r}}\nonumber\\
\lesssim&2^{jq'(2-\frac2p+\frac2q)}\big\|\dot{\Delta}_j\Pi\big\|_{L^{p}}^{(p-1)q'}\big\|\nabla\dot{\Delta}_{j}\Pi\big\|_{L^{2}}^{q'},
\end{align*}
which implies
\begin{align}\label{Ij32}
I_j^{3,2}\lesssim&\sum_{j'\geq j}\|\dot{\Delta}_{j'}a\|_{L^{q}}
\big\||\dot{\Delta}_{j}\nabla\Pi|^2\cdot|\dot{\Delta}_j\Pi|^{p-2}\big\|_{L^{q'}}\nonumber\\
\lesssim&d_j2^{j(2-\frac{2}{p})}\|a\|_{\dot{B}_{q,1}^{\frac{2}{q}}}\|\nabla\Pi\|_{L^2}\big\|\dot{\Delta}_j\Pi\big\|_{L^{p}}^{p-1}.
\end{align}
Similarly, \eqref{Ij32} is valid for $q\leq\max\{p,\frac{1}{p-1}\}$ according to embedding.
Summing up the inequalities \eqref{Ij1}--\eqref{Ij32} results in \eqref{Ijpq}.

\rm{(\romannumeral2)} We first consider the case when $p\in(1,2)$. From \eqref{Ij1} and \eqref{Ij2}, we infer
\begin{align}\label{Ij12}
I_j^{1}+I_j^{2}\lesssim d_j2^{j(2-\frac{2}{p})}\|a\|_{\dot{B}_{p,1}^{\frac{2}{p}}}\|\nabla\Pi\|_{L^2}\|\dot{\Delta}_j\Pi\|_{L^p}^{p-1}.
\end{align}
For $I_j^3$, we alternatively get
\begin{align*}
I_j^{3}\lesssim&\sum_{j'\geq j-2}2^{j'}\|\dot{\Delta}_{j'}a\|_{L^{p}}\|\dot{\Delta}_{j}\nabla\Pi\|_{L^{\infty}}\|\dot{\Delta}_j\Pi\|_{L^p}^{p-1}\nonumber\\
\lesssim&d_j2^{j(2-\frac{2}{p})}\|a\|_{\dot{B}_{p,1}^{\frac{2}{p}}}\|\nabla\Pi\|_{L^2}\|\dot{\Delta}_j\Pi\|_{L^p}^{p-1},
\end{align*}
which together with \eqref{Ij12} implies \eqref{Ijp12}.

Next, if $p\in[2,4)$, using integration by parts leads to
\begin{align}\label{ibp}
I_j=-(p-1)\int_{\R^2}[\dot{\Delta}_j,a]\nabla\Pi\cdot|\dot{\Delta}_j\Pi|^{p-2}\nabla\dot{\Delta}_j\Pi dx
\lesssim 2^j\|[\dot{\Delta}_j,a]\nabla\Pi\|_{L^p}\|\dot{\Delta}_j\Pi\|_{L^p}^{p-1}.
\end{align}
Applying again Bony's decomposition gives
\begin{align*}
[\dot{\Delta}_j,a]\nabla\Pi=[\dot{\Delta}_j,\dot{T}_{a}]\nabla\Pi+\dot{\Delta}_j\big(\dot{T}_{\nabla\Pi}a\big)
+\dot{\Delta}_j\big(\dot{R}(a,\nabla\Pi)\big)-\dot{T}'_{\dot{\Delta}_j\nabla\Pi}a.
\end{align*}
Thanks to \eqref{cmt}, we get by using Young's inequality and Lemma \ref{Bernstein Lemma} that
\begin{align*}
\|[\dot{\Delta}_j,\dot{T}_{a}]\nabla\Pi\|_{L^p}
\lesssim2^{-j}\sum_{|j'-j|\leq4}\|\nabla\dot{S}_{j'-1}a\|_{L^\infty}\|\dot{\Delta}_{j'}\nabla\Pi\|_{L^p}
\lesssim d_j2^{j(1-\frac{2}{p})}\|a\|_{\dot{B}_{p,1}^{\frac{2}{p}}}\|\nabla\Pi\|_{\dot{B}_{p,2}^{\frac{2}{p}-1}}.
\end{align*}
Using again Lemma \ref{Bernstein Lemma}, we arrive at
\begin{align*}
\|\dot{\Delta}_j\big(\dot{T}_{\nabla\Pi}a\big)\|_{L^p}\lesssim&
\sum_{|j'-j|\leq4}\|\dot{\Delta}_{j'}a\|_{L^{p}}\|\dot{S}_{j'-1}\nabla\Pi\|_{L^\infty}
\lesssim d_j2^{j(1-\frac{2}{p})}\|a\|_{\dot{B}_{p,1}^{\frac{2}{p}}}\|\nabla\Pi\|_{\dot{B}_{p,2}^{\frac{2}{p}-1}},\\
\|\dot{T}'_{\dot{\Delta}_j\nabla\Pi}a\|_{L^p}\lesssim&
\sum_{j'\geq j-2}\|\dot{\Delta}_{j'}a\|_{L^{p}}\|\dot{\Delta}_{j}\nabla\Pi\|_{L^\infty}
\lesssim d_j2^{j(1-\frac{2}{p})}\|a\|_{\dot{B}_{p,1}^{\frac{2}{p}}}\|\nabla\Pi\|_{\dot{B}_{p,2}^{\frac{2}{p}-1}},
\end{align*}
and for $p\in[2,4)$
\begin{align*}
\|\dot{\Delta}_j\big(\dot{R}(a,\nabla\Pi)\big)\|_{L^p}\lesssim
2^{\frac{2j}{p}}\sum_{j'\geq j-3}\|\dot{\Delta}_{j'}a\|_{L^{p}}\|\widetilde{\dot{\Delta}}_{j'}\nabla\Pi\|_{L^p}
\lesssim d_j2^{j(1-\frac{2}{p})}\|a\|_{\dot{B}_{p,1}^{\frac{2}{p}}}\|\nabla\Pi\|_{\dot{B}_{p,2}^{\frac{2}{p}-1}}.
\end{align*}
Thus we obtain
$$
\|[\dot{\Delta}_j,a]\nabla\Pi\|_{L^p}\lesssim d_j2^{j(1-\frac{2}{p})}\|a\|_{\dot{B}_{p,1}^{\frac{2}{p}}}\|\nabla\Pi\|_{\dot{B}_{p,2}^{\frac{2}{p}-1}},
$$
which along with \eqref{ibp} gives \eqref{Ijp24}.
\end{proof}

Motivated by \cite{AGZ ARMA 2012,AGZ JMPA 2013,Danchin AFSTM 2006,Danchin JDE 2010},
we shall use Lemma \ref{Lemma3.1} and a duality argument to prove the following crucial elliptic estimates in two space dimensions.
\begin{Proposition}\label{Proposition3.1}
Let $(p,q)\in(1,\infty)\times[1,\infty)$, $a\in\dot{B}_{q,1}^{\frac{2}{q}}(\R^2)$ with $1+a\geq\kappa>0$.
Let $F=(F^1,F^2)\in\dot{B}_{p,1}^{\frac{2}{p}-1}(\R^2)$ and $\nabla\Pi\in\dot{B}_{p,1}^{\frac{2}{p}-1}(\R^2)$ solve \eqref{elliptic equation}.
Then there hold

\rm{(\romannumeral1)} If $p\in(\frac{1+\sqrt{17}}{4},2)$ and $\frac1p-\frac1q\leq\frac12$,
or $p\in(2,\frac{5+\sqrt{17}}{2})$ and $\frac1p+\frac1q\geq\frac12$, then
\begin{align}\label{elliptic estimate1}
\|\nabla\Pi\|_{\dot{B}_{p,2}^{\frac{2}{p}-1}}
\lesssim\big(1+\|a\|_{\dot{B}_{q,1}^{\frac{2}{q}}}\big)\|\mathcal{Q}F\|_{\dot{B}_{p,2}^{\frac{2}{p}-1}}.
\end{align}

\rm{(\romannumeral2)} If $p\in(1,4)$ and $a\in\dot{B}_{p,1}^{\frac{2}{p}}(\R^2)$, then
\begin{align}\label{elliptic estimate2}
\|\nabla\Pi\|_{\dot{B}_{p,1}^{\frac{2}{p}-1}}
\lesssim\big(1+\|a\|_{\dot{B}_{p,1}^{\frac{2}{p}}}\big)^k\|\mathcal{Q}F\|_{\dot{B}_{p,1}^{\frac{2}{p}-1}},
\end{align}
where $k=1$ if $p\in(1,2]$, and $k=2$ if $p\in(2,4)$.
\end{Proposition}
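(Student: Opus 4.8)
\noindent\emph{Proof strategy.} The plan is to combine a weighted $L^p$ energy estimate on the Littlewood--Paley blocks $\dot{\Delta}_j\Pi$ with the commutator bounds of Lemma~\ref{Lemma3.1}, to close the low-regularity part by the elementary $L^2$ energy identity, and to reach the range $p>2$ by duality. Since replacing $F$ by $\mathcal{Q}F$ alters neither $\diverg F$ nor \eqref{elliptic equation}, and $\mathcal{Q}$ is bounded on homogeneous Besov spaces, we may assume $F=\mathcal{Q}F$. Applying $\dot{\Delta}_j$ to \eqref{elliptic equation} gives $\diverg\big((1+a)\nabla\dot{\Delta}_j\Pi\big)=\dot{\Delta}_j\diverg\mathcal{Q}F-\diverg\big([\dot{\Delta}_j,a]\nabla\Pi\big)$; testing this against $|\dot{\Delta}_j\Pi|^{p-2}\dot{\Delta}_j\Pi$ (rigorously via the same approximation device as in the proof of Lemma~\ref{Lemma3.1}), integrating by parts on the left and using $1+a\geq\kappa$, one obtains
\begin{align*}
(p-1)\kappa\int_{\R^2}|\dot{\Delta}_j\Pi|^{p-2}|\nabla\dot{\Delta}_j\Pi|^2\,dx\leq\Big|\int_{\R^2}\dot{\Delta}_j\diverg\mathcal{Q}F\,|\dot{\Delta}_j\Pi|^{p-2}\dot{\Delta}_j\Pi\,dx\Big|+|I_j|,
\end{align*}
with $I_j$ exactly the quantity of Lemma~\ref{Lemma3.1}. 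The first term on the right is $\lesssim\|\dot{\Delta}_j\diverg\mathcal{Q}F\|_{L^p}\|\dot{\Delta}_j\Pi\|_{L^p}^{p-1}\lesssim 2^j\|\dot{\Delta}_j\mathcal{Q}F\|_{L^p}\|\dot{\Delta}_j\Pi\|_{L^p}^{p-1}$ by H\"older's inequality and Lemma~\ref{Bernstein Lemma}, while the standard Bernstein-type lower bound $2^{2j}\|\dot{\Delta}_j\Pi\|_{L^p}^p\lesssim\int_{\R^2}|\dot{\Delta}_j\Pi|^{p-2}|\nabla\dot{\Delta}_j\Pi|^2\,dx$ for spectrally localized functions (cf.\ the appendix of \cite{Danchin CPDE 2001}) handles the left. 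Dividing by $\kappa\,2^{2j}\|\dot{\Delta}_j\Pi\|_{L^p}^{p-1}$, inserting the bound for $I_j$, and using $\|\dot{\Delta}_j\nabla\Pi\|_{L^p}\sim 2^j\|\dot{\Delta}_j\Pi\|_{L^p}$, I obtain $2^{j(\frac2p-1)}\|\dot{\Delta}_j\nabla\Pi\|_{L^p}\lesssim 2^{j(\frac2p-1)}\|\dot{\Delta}_j\mathcal{Q}F\|_{L^p}+d_j\,A\,N$, where $(A,N)$ is $(\|a\|_{\dot{B}_{q,1}^{2/q}},\|\nabla\Pi\|_{L^2})$ for \eqref{Ijpq}, $(\|a\|_{\dot{B}_{p,1}^{2/p}},\|\nabla\Pi\|_{L^2})$ for \eqref{Ijp12}, and $(\|a\|_{\dot{B}_{p,1}^{2/p}},\|\nabla\Pi\|_{\dot{B}_{p,2}^{2/p-1}})$ for \eqref{Ijp24}. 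Taking the $\ell^r$ norm in $j$ and using $\|(d_j)\|_{\ell^r}\leq 1$ ($r\in\{1,2\}$) gives the working inequality $\|\nabla\Pi\|_{\dot{B}_{p,r}^{\frac2p-1}}\lesssim \|\mathcal{Q}F\|_{\dot{B}_{p,r}^{\frac2p-1}}+A\,N$ (with implicit constant depending on $\kappa$).

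For \eqref{elliptic estimate1} with $p\in(\frac{1+\sqrt{17}}{4},2)$ and $\frac1p-\frac1q\leq\frac12$, I take $r=2$ and use \eqref{Ijpq}, so $AN=\|a\|_{\dot{B}_{q,1}^{2/q}}\|\nabla\Pi\|_{L^2}$. Testing \eqref{elliptic equation} against $\Pi$ and using $1+a\geq\kappa$ gives $\|\nabla\Pi\|_{L^2}\leq\kappa^{-1}\|\mathcal{Q}F\|_{L^2}$, and as $p<2$ the embedding $\dot{B}_{p,2}^{2/p-1}\hookrightarrow\dot{B}_{2,2}^{0}=L^2$ bounds $\|\mathcal{Q}F\|_{L^2}$ by $\|\mathcal{Q}F\|_{\dot{B}_{p,2}^{2/p-1}}$; combining these yields \eqref{elliptic estimate1}. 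For $p\in(2,\frac{5+\sqrt{17}}{2})$ with $\frac1p+\frac1q\geq\frac12$ I argue by duality. With $p'$ the conjugate exponent, one checks $p'\in(\frac{1+\sqrt{17}}{4},2)$ and $\frac1{p'}-\frac1q=1-\frac1p-\frac1q\leq\frac12$, so the previous case applies to the transposed equation $\diverg\big((1+a)\nabla\phi\big)=\diverg g$ with $g=\mathcal{Q}g\in\dot{B}_{p',2}^{2/p'-1}$, producing a solution with $\|\nabla\phi\|_{\dot{B}_{p',2}^{2/p'-1}}\lesssim(1+\|a\|_{\dot{B}_{q,1}^{2/q}})\|g\|_{\dot{B}_{p',2}^{2/p'-1}}$ (existence is standard: Lax--Milgram on $\dot{H}^1$ for the $L^2$ datum $\diverg g$, followed by a regularity bootstrap using that same a priori bound, or an approximation of $a$ by smooth coefficients). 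Pairing the two weak formulations gives the symmetry $\langle\nabla\Pi,g\rangle=\langle\mathcal{Q}F,\nabla\phi\rangle$, hence $|\langle\nabla\Pi,g\rangle|\leq\|\mathcal{Q}F\|_{\dot{B}_{p,2}^{2/p-1}}\|\nabla\phi\|_{\dot{B}_{p',2}^{2/p'-1}}$; taking the supremum over $g$ in the unit ball of $\dot{B}_{p',2}^{2/p'-1}=(\dot{B}_{p,2}^{2/p-1})^{*}$ (only the gradient part of the test function pairs with $\nabla\Pi$) proves \eqref{elliptic estimate1} in this range too.

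For \eqref{elliptic estimate2} I take $r=1$ and $q=p$. If $p\in(1,2]$, I use \eqref{Ijp12} (or \eqref{Ijp24} when $p=2$, where $\dot{B}_{p,2}^{2/p-1}=L^2$), so $AN=\|a\|_{\dot{B}_{p,1}^{2/p}}\|\nabla\Pi\|_{L^2}$, and bounding $\|\nabla\Pi\|_{L^2}\leq\kappa^{-1}\|\mathcal{Q}F\|_{L^2}\lesssim\|\mathcal{Q}F\|_{\dot{B}_{p,1}^{2/p-1}}$ as above gives \eqref{elliptic estimate2} with $k=1$. If $p\in(2,4)$, I use \eqref{Ijp24}, so $AN=\|a\|_{\dot{B}_{p,1}^{2/p}}\|\nabla\Pi\|_{\dot{B}_{p,2}^{2/p-1}}$, and this norm does not close the estimate on its own; here I feed in \eqref{elliptic estimate1}, which applies with $q=p$ since $p\in(2,\frac{5+\sqrt{17}}{2})$ and $\frac2p\geq\frac12$, to get $\|\nabla\Pi\|_{\dot{B}_{p,2}^{2/p-1}}\lesssim(1+\|a\|_{\dot{B}_{p,1}^{2/p}})\|\mathcal{Q}F\|_{\dot{B}_{p,2}^{2/p-1}}\leq(1+\|a\|_{\dot{B}_{p,1}^{2/p}})\|\mathcal{Q}F\|_{\dot{B}_{p,1}^{2/p-1}}$; substituting yields \eqref{elliptic estimate2} with $k=2$.

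The integration by parts, the Bernstein-type lower bound and the dependence on $\kappa$ are routine. The real obstacle is the regime $p>2$: the commutator estimate \eqref{Ijpq} is unavailable there, and \eqref{Ijp24} carries the non-absorbable norm $\|\nabla\Pi\|_{\dot{B}_{p,2}^{2/p-1}}$ rather than the harmless $\|\nabla\Pi\|_{L^2}$, which forces the duality detour. The thresholds $\tfrac{1+\sqrt{17}}{4}$ and $\tfrac{5+\sqrt{17}}{2}$ enter precisely because they are interchanged by $p\mapsto p'$, and the conditions $\frac1p-\frac1q\leq\frac12$ and $\frac1p+\frac1q\geq\frac12$ are dual to each other, so that the $p>2$ problem is dual to an admissible $p<2$ one; verifying these compatibilities and the existence of the transposed solution is the delicate point. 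Existence of the operator $\mathcal{H}_a$ itself I would treat separately, by a Neumann series for $\nabla\Pi=\mathcal{Q}F-\mathcal{Q}(a\nabla\Pi)$ when $\|a\|_{\dot{B}_{p,1}^{2/p}}$ is small and then removing the smallness via the a priori bounds above.
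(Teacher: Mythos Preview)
Your proposal is correct and follows essentially the same route as the paper: the localized energy inequality \eqref{Ee1} obtained by testing $\dot{\Delta}_j$ of \eqref{elliptic equation} against $|\dot{\Delta}_j\Pi|^{p-2}\dot{\Delta}_j\Pi$, the commutator bounds of Lemma~\ref{Lemma3.1}, closure via the $L^2$ identity plus the embedding $\dot{B}_{p,2}^{2/p-1}\hookrightarrow L^2$ for $p<2$, the duality step for $p>2$ (with exactly the same verification that $p'$ and $q$ fall in the admissible range), and finally feeding \eqref{elliptic estimate1} with $q=p$ into the $\ell^1$ estimate built on \eqref{Ijp24} to get $k=2$ for $p\in(2,4)$. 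The only cosmetic differences are that the paper packages your ``integration by parts $+$ Bernstein lower bound'' as a single citation to Lemma~8 in Appendix~B of \cite{Danchin JDE 2010}, and it dispenses with the existence discussion by stating upfront that only a priori estimates on smooth data are presented, with density arguments left implicit.
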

\begin{proof} For simplicity, we just present \emph{a priori} estimate for smooth enough functions $a$, $F$ and $\nabla\Pi$.
Density arguments make the following computations rigorous.
Thanks to $1+a\geq\kappa>0$ and $\diverg F=\diverg\mathcal{Q}F$, we readily deduce from \eqref{elliptic equation} that
\begin{align}\label{elliptic estimateL2}
\kappa\|\nabla\Pi\|_{L^2}\leq\|\mathcal{Q}F\|_{L^2}.
\end{align}
Applying $\dot{\Delta}_j$ to \eqref{elliptic equation} gives
\begin{align*}
\diverg\big((1+a)\dot{\Delta}_j\nabla\Pi\big)=&\diverg\dot{\Delta}_jF-\diverg\big([\dot{\Delta}_j,a]\nabla\Pi\big).
\end{align*}
We next multiply the above equation by $-|\dot{\Delta}_j\Pi|^{p-2}\dot{\Delta}_j\Pi$ and integrate over $\R^2$.
Then applying Lemma 8 in Appendix B of \cite{Danchin JDE 2010} implies for some constants $c$ and $C$
\begin{align}\label{Ee1}
c\kappa2^{2j}\|\dot{\Delta}_j\Pi\|_{L^p}^p\leq C2^j\|\dot{\Delta}_j\mathcal{Q}F\|_{L^p}\|\dot{\Delta}_j\Pi\|_{L^p}^{p-1}
+\int_{\R^2}\diverg\big([\dot{\Delta}_j,a]\nabla\Pi\big)\cdot|\dot{\Delta}_j\Pi|^{p-2}\dot{\Delta}_j\Pi dx.
\end{align}

\rm{(\romannumeral1)} If $p\in(\frac{1+\sqrt{17}}{4},2)$ with $\frac1p-\frac1q\leq\frac12$, substituting \eqref{Ijpq} into \eqref{Ee1} leads to
\begin{align*}
\|\dot{\Delta}_j\nabla\Pi\|_{L^p}\lesssim\|\dot{\Delta}_j\mathcal{Q}F\|_{L^p}+d_j2^{j(1-\frac{2}{p})}\|a\|_{\dot{B}_{q,1}^{\frac{2}{q}}}\|\nabla\Pi\|_{L^2},
\end{align*}
which along with \eqref{elliptic estimateL2} and the embedding $\dot{B}_{p,2}^{\frac{2}{p}-1}(\R^2)\hookrightarrow L^2(\R^2)$, $l^1\hookrightarrow l^2$ gives
\begin{align}\label{Ee2}
\|\nabla\Pi\|_{\dot{B}_{p,2}^{\frac{2}{p}-1}}
\lesssim\big(1+\|a\|_{\dot{B}_{q,1}^{\frac{2}{q}}}\big)\|\mathcal{Q}F\|_{\dot{B}_{p,2}^{\frac{2}{p}-1}}.
\end{align}

Next, we consider the case when $p\in(2,\frac{5+\sqrt{17}}{2})$ with $\frac1p+\frac1q\geq\frac12$.
In this case, motivated by \cite{AGZ ARMA 2012,AGZ JMPA 2013,Danchin AFSTM 2006}, we shall use a duality argument:
\begin{align}\label{Ee3}
\|\nabla\Pi\|_{\dot{B}_{p,2}^{\frac{2}{p}-1}}=\sup_{\|g\|_{\dot{B}_{p',2}^{1-\frac{2}{p}}}=1}\langle\nabla\Pi,g\rangle
=\sup_{\|g\|_{\dot{B}_{p',2}^{\frac{2}{p'}-1}}=1}-\langle\Pi,\diverg g\rangle,
\end{align}
where $\langle\cdot,\cdot\rangle$ denotes the duality bracket between $\mathscr{S}'(\R^2)$ and $\mathscr{S}(\R^2)$.
Notice that $p'\in(\frac{1+\sqrt{17}}{4},2)$ and $\frac{1}{p'}-\frac1q\leq\frac12$,
then applying \eqref{Ee2} ensures that for any $g\in\dot{B}_{p',2}^{\frac{2}{p'}-1}(\R^2)$,
there exists a unique solution $\nabla P_g\in\dot{B}_{p',2}^{\frac{2}{p'}-1}(\R^2)$ to the elliptic equation
\begin{align*}
\diverg\big((1+a)\nabla P_g\big)=\diverg g,
\end{align*}
such that
\begin{align}\label{Ee4}
\|\nabla P_g\|_{\dot{B}_{p',2}^{\frac{2}{p'}-1}}
\lesssim\big(1+\|a\|_{\dot{B}_{q,1}^{\frac{2}{q}}}\big)\|g\|_{\dot{B}_{p',2}^{\frac{2}{p'}-1}}.
\end{align}
We proceed
\begin{align*}
-\langle\Pi,\diverg g\rangle=&-\langle\Pi,\diverg\big((1+a)\nabla P_g\big)\rangle=-\langle\diverg\big((1+a)\nabla\Pi\big),P_g\rangle\\
=&-\langle\diverg F,P_g\rangle=\langle\mathcal{Q}F,\nabla P_g\rangle
\leq\|\mathcal{Q}F\|_{\dot{B}_{p,2}^{\frac{2}{p}-1}}\|\nabla P_g\|_{\dot{B}_{p',2}^{\frac{2}{p'}-1}},
\end{align*}
which along with \eqref{Ee3} and \eqref{Ee4} implies \eqref{elliptic estimate1}.

\rm{(\romannumeral2)} If $p\in(1,2]$, substituting \eqref{Ijp12} into \eqref{Ee1} and using a similar argument as \eqref{Ee2} gives
\begin{align*}
\|\nabla\Pi\|_{\dot{B}_{p,1}^{\frac{2}{p}-1}}
\lesssim\big(1+\|a\|_{\dot{B}_{p,1}^{\frac{2}{p}}}\big)\|\mathcal{Q}F\|_{\dot{B}_{p,1}^{\frac{2}{p}-1}}.
\end{align*}
If $p\in(2,4)$, we get by substituting \eqref{Ijp24} into \eqref{Ee1} that
\begin{align*}
\|\dot{\Delta}_j\nabla\Pi\|_{L^p}\lesssim\|\dot{\Delta}_j\mathcal{Q}F\|_{L^p}+d_j2^{j(1-\frac{2}{p})}
\|a\|_{\dot{B}_{p,1}^{\frac{2}{p}}}\|\nabla\Pi\|_{\dot{B}_{p,2}^{\frac{2}{p}-1}},
\end{align*}
which along with \eqref{elliptic estimate1} leads to
\begin{align*}
\|\nabla\Pi\|_{\dot{B}_{p,1}^{\frac{2}{p}-1}}\lesssim
\big(1+\|a\|_{\dot{B}_{p,1}^{\frac{2}{p}}}\big)^2\|\mathcal{Q}F\|_{\dot{B}_{p,1}^{\frac{2}{p}-1}}.
\end{align*}
This completes the proof of the proposition.
\end{proof}

\begin{Proposition}\label{Proposition3.2}
Let $\mu>0$, $1<p<4$, $u_0\in\dot{B}_{p,1}^{\frac{2}{p}-1}(\R^2)$ and $a,b\in L_T^\infty(\dot{B}_{p,1}^{\frac{2}{p}}(\R^2))$
with $1+a\geq\kappa>0$ and $\mu+b\geq\kappa>0$.
Let $f\in L_T^1(\dot{B}_{p,1}^{\frac{2}{p}-1}(\R^2))$, $g\in L_T^1(\dot{B}_{p,1}^{\frac{2}{p}}(\R^2))$
and $\partial_t g=\diverg R$ with $R\in L_T^1(\dot{B}_{p,1}^{\frac{2}{p}-1}(\R^2))$.
Let $(u,\nabla\Pi)\in C([0,T];\dot{B}_{p,1}^{\frac{2}{p}-1}(\R^2))\cap L_T^1(\dot{B}_{p,1}^{\frac{2}{p}+1}(\R^2))
\times L_T^1(\dot{B}_{p,1}^{\frac{2}{p}-1}(\R^2))$ solve
\begin{eqnarray}\label{stokes}
\left\{\begin{aligned}
&\partial_t u-\mathrm{div}\big((\mu+b)\nabla u\big)+(1+a)\nabla\Pi=f,\\
&\diverg u=g,\\
&u|_{t=0}=u_0.
\end{aligned}\right.
\end{eqnarray}
Then there holds for $t\in[0,T]$
\begin{align}\label{parabolic estimate1}
&\|u\|_{\widetilde{L}_t^\infty(\dot{B}_{p,1}^{\frac{2}{p}-1})}+\|u\|_{L_t^1(\dot{B}_{p,1}^{\frac{2}{p}+1})}
+\|\nabla\Pi\|_{L_t^1(\dot{B}_{p,1}^{\frac{2}{p}-1})}\nonumber\\
\lesssim&\|u_0\|_{\dot{B}_{p,1}^{\frac{2}{p}-1}}+\big(1+\|a\|_{L_t^\infty(\dot{B}_{p,1}^{\frac{2}{p}})}\big)^3
\big\{\|f\|_{L_t^1(\dot{B}_{p,1}^{\frac{2}{p}-1})}+\|R\|_{L_t^1(\dot{B}_{p,1}^{\frac{2}{p}-1})}\nonumber\\
&+(1+\|b\|_{L_t^\infty(L^\infty)})\|g\|_{L_t^1(\dot{B}_{p,1}^{\frac{2}{p}})}
+2^{m}\|b\|_{L_t^\infty(\dot{B}_{p,1}^{\frac{2}{p}})}\|u\|_{L_t^1(\dot{B}_{p,1}^{\frac{2}{p}})}\big\},
\end{align}
provided that
\begin{align}\label{smallness}
\big(1+\|a\|_{L_T^\infty(\dot{B}_{p,1}^{\frac{2}{p}})}\big)^3\|b-\dot{S}_m b\|_{L_T^\infty(\dot{B}_{p,1}^{\frac{2}{p}})}\leq c_0
\end{align}
for some sufficiently small positive constant $c_0$ and some integer $m\in \mathbb{Z}$.
\end{Proposition}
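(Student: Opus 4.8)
\emph{Strategy.} The plan is to estimate the pressure via Proposition~\ref{Proposition3.1} and the velocity via an $L^{p}$ energy estimate for a heat equation whose diffusion coefficient is the low-frequency truncation $\mu+\dot{S}_{m}b$, treating the high-frequency remainder $\tilde{b}\stackrel{\mathrm{def}}{=}b-\dot{S}_{m}b$ as a perturbation which is small thanks to \eqref{smallness}. As in the proof of Proposition~\ref{Proposition3.1}, it suffices to derive the \emph{a priori} estimate for smooth data. All $\dot{B}_{p,1}^{s}$-norms appearing below are understood at a fixed time and are integrated in $t$ only at the very end; the Chemin--Lerner structure of the left-hand side is recovered by summing in $j$ after, rather than before, taking $\sup_{t}$.

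\emph{Step 1: the pressure.} Taking $\diverg$ of the momentum equation in \eqref{stokes} and using $\diverg u=g$ together with $\partial_{t}g=\diverg R$ gives $\diverg\big((1+a)\nabla\Pi\big)=\diverg f-\diverg R+\diverg\big(\diverg((\mu+b)\nabla u)\big)$. The key algebraic point is that, using once more $\diverg u=g$, one may write $\diverg\big(\diverg((\mu+b)\nabla u)\big)=\diverg\big((\mu+b)\nabla g+W\big)$ with $W_{i}\stackrel{\mathrm{def}}{=}\partial_{i}u\cdot\nabla b=\sum_{k}\partial_{i}u_{k}\,\partial_{k}b$, so that $\Pi$ solves \eqref{elliptic equation} with $F=f-R+(\mu+b)\nabla g+W$. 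Proposition~\ref{Proposition3.1}~(ii) and the boundedness of $\mathcal{Q}$ on $\dot{B}_{p,1}^{\frac2p-1}(\R^{2})$ then give $\|\nabla\Pi\|_{\dot{B}_{p,1}^{\frac2p-1}}\lesssim(1+\|a\|_{\dot{B}_{p,1}^{\frac2p}})^{k}\|F\|_{\dot{B}_{p,1}^{\frac2p-1}}$ with $k\le2$. In $F$, the terms $\|f\|$ and $\|R\|$ are controlled directly, $\|(\mu+b)\nabla g\|_{\dot{B}_{p,1}^{\frac2p-1}}\lesssim(1+\|b\|_{L^{\infty}})\|g\|_{\dot{B}_{p,1}^{\frac2p}}$ by Bony's decomposition and Lemma~\ref{Product Laws}, and I would split $W$ according to $b=\dot{S}_{m}b+\tilde{b}$: by Lemma~\ref{Product Laws} (and its Remark, valid for $p\in(1,4)$) together with the Bernstein bound $\|\nabla\dot{S}_{m}b\|_{\dot{B}_{p,1}^{\frac2p}}\lesssim2^{m}\|b\|_{\dot{B}_{p,1}^{\frac2p}}$,
$$\big\|(\partial_{i}u\cdot\nabla\dot{S}_{m}b)_{i}\big\|_{\dot{B}_{p,1}^{\frac2p-1}}\lesssim2^{m}\|b\|_{\dot{B}_{p,1}^{\frac2p}}\|u\|_{\dot{B}_{p,1}^{\frac2p}},\qquad\big\|(\partial_{i}u\cdot\nabla\tilde{b})_{i}\big\|_{\dot{B}_{p,1}^{\frac2p-1}}\lesssim\|\tilde{b}\|_{\dot{B}_{p,1}^{\frac2p}}\|u\|_{\dot{B}_{p,1}^{\frac2p+1}},$$
the last being absorbed in Step~3.

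\emph{Step 2: the velocity.} Rewrite the momentum equation as $\partial_{t}u-\diverg\big((\mu+\dot{S}_{m}b)\nabla u\big)+(1+a)\nabla\Pi=f+\diverg(\tilde{b}\nabla u)$. Since $\|\tilde{b}\|_{L^{\infty}}\lesssim\|\tilde{b}\|_{\dot{B}_{p,1}^{\frac2p}}\le c_{0}$, choosing $c_{0}$ small guarantees $\mu+\dot{S}_{m}b\ge\kappa/2>0$; applying $\dot{\Delta}_{j}$, testing against $|\dot{\Delta}_{j}u|^{p-2}\dot{\Delta}_{j}u$ and using Lemma~8 of Appendix~B of \cite{Danchin JDE 2010} exactly as for \eqref{Ee1} gives
$$\tfrac{d}{dt}\|\dot{\Delta}_{j}u\|_{L^{p}}+c\kappa2^{2j}\|\dot{\Delta}_{j}u\|_{L^{p}}\lesssim\|\dot{\Delta}_{j}f\|_{L^{p}}+\|\dot{\Delta}_{j}\big((1+a)\nabla\Pi\big)\|_{L^{p}}+\|\dot{\Delta}_{j}\diverg(\tilde{b}\nabla u)\|_{L^{p}}+\big\|\diverg([\dot{\Delta}_{j},\dot{S}_{m}b]\nabla u)\big\|_{L^{p}}.$$
Gronwall's lemma, multiplication by $2^{j(\frac2p-1)}$, summation over $j\in\mathbb{Z}$ and the bound $\int_{0}^{t}e^{-c\kappa2^{2j}(t-\tau)}d\tau\le C\kappa^{-1}2^{-2j}$ control $\|u\|_{\widetilde{L}_{t}^{\infty}(\dot{B}_{p,1}^{\frac2p-1})}+\|u\|_{L_{t}^{1}(\dot{B}_{p,1}^{\frac2p+1})}$ by $\|u_{0}\|_{\dot{B}_{p,1}^{\frac2p-1}}$, $\|f\|_{L_{t}^{1}(\dot{B}_{p,1}^{\frac2p-1})}$, $\big(1+\|a\|_{L_{t}^{\infty}(\dot{B}_{p,1}^{\frac2p})}\big)\|\nabla\Pi\|_{L_{t}^{1}(\dot{B}_{p,1}^{\frac2p-1})}$, $\|\tilde{b}\|_{L_{t}^{\infty}(\dot{B}_{p,1}^{\frac2p})}\|u\|_{L_{t}^{1}(\dot{B}_{p,1}^{\frac2p+1})}$, and $\sum_{j}2^{\frac{2j}{p}}\|[\dot{\Delta}_{j},\dot{S}_{m}b]\nabla u\|_{L_{t}^{1}(L^{p})}$. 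For the last quantity, a Bony decomposition of $\dot{S}_{m}b\,\nabla u$ shows, as in the computation \eqref{cmt}, that the only non-negligible contribution is the one in which the commutator produces $\nabla\dot{S}_{m}b$; using $\|\nabla\dot{S}_{m}b\|_{L^{\infty}}\lesssim2^{m}\|b\|_{\dot{B}_{p,1}^{\frac2p}}$ one obtains $\sum_{j}2^{\frac{2j}{p}}\|[\dot{\Delta}_{j},\dot{S}_{m}b]\nabla u\|_{L^{p}}\lesssim2^{m}\|b\|_{\dot{B}_{p,1}^{\frac2p}}\|u\|_{\dot{B}_{p,1}^{\frac2p}}$.

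\emph{Step 3: closing the estimate, and the main difficulty.} One inserts the Step~1 bound for $\|\nabla\Pi\|_{L_{t}^{1}(\dot{B}_{p,1}^{\frac2p-1})}$ into Step~2; using $\|(1+a)\nabla\Pi\|_{\dot{B}_{p,1}^{\frac2p-1}}\lesssim(1+\|a\|_{\dot{B}_{p,1}^{\frac2p}})\|\nabla\Pi\|_{\dot{B}_{p,1}^{\frac2p-1}}$, the powers of $1+\|a\|_{L_{T}^{\infty}(\dot{B}_{p,1}^{\frac2p})}$ never exceed $3$. Every term carrying $\|u\|_{L_{t}^{1}(\dot{B}_{p,1}^{\frac2p+1})}$ — namely the $\tilde{b}$-part of $W$ and the term $\diverg(\tilde{b}\nabla u)$, after the bound $\|\tilde{b}\nabla u\|_{\dot{B}_{p,1}^{\frac2p}}\lesssim\|\tilde{b}\|_{\dot{B}_{p,1}^{\frac2p}}\|u\|_{\dot{B}_{p,1}^{\frac2p+1}}$ — then comes with a prefactor $\le\big(1+\|a\|_{L_{T}^{\infty}(\dot{B}_{p,1}^{\frac2p})}\big)^{3}\|\tilde{b}\|_{L_{T}^{\infty}(\dot{B}_{p,1}^{\frac2p})}\le c_{0}$ by \eqref{smallness}; for $c_{0}$ small these are absorbed by the left-hand side, and adding back the Step~1 control of $\|\nabla\Pi\|_{L_{t}^{1}(\dot{B}_{p,1}^{\frac2p-1})}$ yields \eqref{parabolic estimate1}. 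The heart of the proof is the algebraic reduction in Step~1: using $\diverg u=g$ to replace the dangerous quantity $b\,\nabla^{2}u$ inside $\diverg\diverg((\mu+b)\nabla u)$ by $(\mu+b)\nabla g+(\partial_{i}u\cdot\nabla b)_{i}$ turns a loss of a full power of $\|u\|_{\dot{B}_{p,1}^{\frac2p+1}}$ at the order of the \emph{large} quantity $\|b\|_{\dot{B}_{p,1}^{\frac2p}}$ into a harmless term costing $2^{m}\|b\|_{\dot{B}_{p,1}^{\frac2p}}\|u\|_{\dot{B}_{p,1}^{\frac2p}}$ plus a genuinely small one built on $\tilde{b}$. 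The companion obstacle is the commutator estimate for $\diverg([\dot{\Delta}_{j},\dot{S}_{m}b]\nabla u)$ together with the low-frequency coercivity of $\diverg((\mu+\dot{S}_{m}b)\nabla\cdot)$ used in Step~2; the remainder is careful accounting of the smallness \eqref{smallness}.
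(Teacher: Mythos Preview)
Your strategy and its execution are essentially those of the paper: split $b=\dot{S}_{m}b+\tilde b$, estimate $\nabla\Pi$ via Proposition~\ref{Proposition3.1}, run an $L^{p}$ energy argument for the heat part with coefficient $\mu+\dot{S}_{m}b$, control the commutator $\diverg([\dot{\Delta}_{j},\dot{S}_{m}b]\nabla u)$ by $2^{m}\|b\|_{\dot{B}_{p,1}^{2/p}}\|u\|_{\dot{B}_{p,1}^{2/p}}$ (the paper quotes Lemma~5 of \cite{Danchin AIF 2014} for exactly this), and absorb the $\tilde b$-terms via \eqref{smallness}. Step~2 and Step~3 match the paper's \eqref{Pe7} and the combination \eqref{Pe6}+\eqref{Pe7} almost verbatim.

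The one genuine variation is your Step~1. You use the identity $\diverg\diverg((\mu+b)\nabla u)=\diverg((\mu+b)\nabla g+W)$ with $W_{i}=\partial_{i}u\cdot\nabla b$ and then split $W$ according to $b=\dot{S}_{m}b+\tilde b$; the paper instead first splits $b$ in the momentum equation (so only $\mu\nabla g$ appears) and is left with $\mathcal{Q}(\dot{S}_{m}b\,\Delta u)$, which it analyzes by a detailed Bony decomposition leading to \eqref{Pe5}. Your route is shorter and avoids that computation. The price is a minor loss: your claimed bound $\|(\mu+b)\nabla g\|_{\dot{B}_{p,1}^{2/p-1}}\lesssim(1+\|b\|_{L^{\infty}})\|g\|_{\dot{B}_{p,1}^{2/p}}$ does not follow from Lemma~\ref{Product Laws} alone---only the paraproduct $\dot{T}_{b}\nabla g$ gives $\|b\|_{L^{\infty}}$, while $\dot{T}_{\nabla g}b$ and $\dot{R}(b,\nabla g)$ force $\|b\|_{\dot{B}_{p,1}^{2/p}}$. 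Thus your argument actually yields \eqref{parabolic estimate1} with $(1+\|b\|_{L_{t}^{\infty}(\dot{B}_{p,1}^{2/p})})$ in place of $(1+\|b\|_{L_{t}^{\infty}(L^{\infty})})$. The paper recovers the sharper $L^{\infty}$ factor precisely from the term $\dot{T}_{\dot{S}_{m}b}\Delta g$ inside $\mathcal{Q}(\dot{S}_{m}b\,\Delta u)$. This weakening is harmless for every use of the proposition downstream (Corollary~\ref{Corollary3.1} immediately bounds both norms by $\|a\|_{\dot{B}_{p,1}^{2/p}}$), but it does not reproduce the proposition exactly as stated.
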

\begin{proof}
Motivated by \cite{AGZ ARMA 2012,AGZ JMPA 2013,Danchin CPDE 2007}, we first use the decomposition $\mathrm{Id}=\dot{S}_m+(\mathrm{Id}-\dot{S}_m)$
to turn the $u$ equation of \eqref{stokes} into
\begin{align}\label{Pe1}
\partial_t u-\mathrm{div}\big((\mu+\dot{S}_m b)\nabla u\big)+(1+a)\nabla\Pi=f+\mathrm{div}\big((b-\dot{S}_m b)\nabla u\big).
\end{align}
Thanks to $\mu+b\geq\kappa>0$ and \eqref{smallness}, we infer
\begin{align}\label{Pe2}
\mu+\dot{S}_m b=\mu+b+(\dot{S}_m b-b)\geq\frac12\kappa.
\end{align}
Then taking $\diverg$ to \eqref{Pe1} and using $\diverg u=g$ and $\partial_t g=\diverg R$ leads to
\begin{align*}
\diverg\big((1+a)\nabla\Pi\big)=\diverg\big(f+\mathrm{div}\big((b-\dot{S}_m b)\nabla u\big)-R+\mu\nabla g
+\nabla \dot{S}_m b\cdot\nabla u+\dot{S}_m b\Delta u\big),
\end{align*}
from which, we get by applying \eqref{elliptic estimate2} that
\begin{align}\label{Pe3}
\|\nabla\Pi\|_{L_t^1(\dot{B}_{p,1}^{\frac{2}{p}-1})}\lesssim&\big(1+\|a\|_{L_t^\infty(\dot{B}_{p,1}^{\frac{2}{p}})}\big)^2
\big\{\|f\|_{L_t^1(\dot{B}_{p,1}^{\frac{2}{p}-1})}+\|(b-\dot{S}_m b)\nabla u\|_{L_t^1(\dot{B}_{p,1}^{\frac{2}{p}})}\nonumber\\
&+\|R\|_{L_t^1(\dot{B}_{p,1}^{\frac{2}{p}-1})}+\|g\|_{L_t^1(\dot{B}_{p,1}^{\frac{2}{p}})}
+\|\nabla\dot{S}_m b\cdot\nabla u\|_{L_t^1(\dot{B}_{p,1}^{\frac{2}{p}-1})}\nonumber\\
&+\|\mathcal{Q}(\dot{S}_m b\Delta u)\|_{L_t^1(\dot{B}_{p,1}^{\frac{2}{p}-1})}\big\}.
\end{align}
Using product laws in Besov spaces, we get
\begin{align}\label{Pe4}
&\|(b-\dot{S}_m b)\nabla u\|_{L_t^1(\dot{B}_{p,1}^{\frac{2}{p}})}+\|\nabla\dot{S}_mb\cdot\nabla u\|_{L_t^1(\dot{B}_{p,1}^{\frac2p-1})}\nonumber\\
\lesssim&\|b-\dot{S}_m b\|_{L_t^\infty(\dot{B}_{p,1}^{\frac{2}{p}})}\|u\|_{L_t^1(\dot{B}_{p,1}^{\frac{2}{p}+1})}
+2^m\|b\|_{L_t^\infty(\dot{B}_{p,1}^{\frac{2}{p}})}\|u\|_{L_t^1(\dot{B}_{p,1}^{\frac{2}{p}})}.
\end{align}
Yet notice that $\mathcal{Q}=-\nabla(-\Delta)^{-1}\diverg$ and $\diverg u=g$, we get by applying Bony's decomposition
$$
\mathcal{Q}(\dot{S}_mb\Delta u)=-\nabla(-\Delta)^{-1}\big(\dot{T}_{\nabla \dot{S}_mb}\Delta u+\dot{T}_{\dot{S}_mb}\Delta g\big)
+\mathcal{Q}\big(\dot{T}_{\Delta u}\dot{S}_mb\big)+\mathcal{Q}\big(\dot{R}(\dot{S}_mb,\Delta u)\big).
$$
Then it is easy to get
\begin{align*}
\|\dot{\Delta}_j\big(\dot{T}_{\nabla \dot{S}_mb}\Delta u\big)\|_{L^p}
\lesssim&\sum_{|j'-j|\leq4}\|\dot{S}_{j'-1}\nabla \dot{S}_mb\|_{L^\infty}\|\dot{\Delta}_{j'}\Delta u\|_{L^p}
\lesssim d_j2^{j(2-\frac2p)+m}\|b\|_{L^\infty}\|u\|_{\dot{B}_{p,1}^{\frac{2}{p}}},\\
\|\dot{\Delta}_j\big(\dot{T}_{\dot{S}_mb}\Delta g\big)\|_{L^p}
\lesssim&\sum_{|j'-j|\leq4}\|\dot{S}_{j'-1}\dot{S}_mb\|_{L^\infty}\|\dot{\Delta}_{j'}\Delta g\|_{L^p}
\lesssim d_j2^{j(2-\frac2p)}\|b\|_{L^\infty}\|g\|_{\dot{B}_{p,1}^{\frac{2}{p}}},\\
\|\dot{\Delta}_j\big(\dot{T}_{\Delta u}\dot{S}_mb\big)\|_{L^p}
\lesssim&\sum_{|j'-j|\leq4}\|\dot{\Delta}_{j'}\dot{S}_mb\|_{L^p}\|\dot{S}_{j'-1}\Delta u\|_{L^\infty}
\lesssim d_j2^{j(1-\frac2p)+m}\|b\|_{\dot{B}_{p,1}^{\frac{2}{p}}}\|u\|_{\dot{B}_{p,1}^{\frac{2}{p}}}.
\end{align*}
While for $p\in[2,4)$, applying Lemma \ref{Bernstein Lemma} yields
\begin{align*}
\|\dot{\Delta}_j\dot{R}(\dot{S}_mb,\Delta u)\|_{L^p}
\lesssim&2^{\frac{2j}{p}}\sum_{j'\geq j-3}\|\dot{\Delta}_{j'}\dot{S}_mb\|_{L^p}\|\widetilde{\dot{\Delta}}_{j'}\Delta u\|_{L^p}
\lesssim d_j2^{j(1-\frac2p)+m}\|b\|_{\dot{B}_{p,1}^{\frac{2}{p}}}\|u\|_{\dot{B}_{p,1}^{\frac{2}{p}}}.
\end{align*}
And for $p\in(1,2)$, we have $p'=\frac{p}{p-1}>p$ so that we could alternatively get
\begin{align*}
\|\dot{\Delta}_j\dot{R}(\dot{S}_mb,\Delta u)\|_{L^p}
\lesssim&2^{\frac{2j}{p'}}\sum_{j'\geq j-3}\|\dot{\Delta}_{j'}\dot{S}_mb\|_{L^{p'}}\|\widetilde{\dot{\Delta}}_{j'}\Delta u\|_{L^p}
\lesssim d_j2^{j(1-\frac2p)+m}\|b\|_{\dot{B}_{p,1}^{\frac{2}{p}}}\|u\|_{\dot{B}_{p,1}^{\frac{2}{p}}}.
\end{align*}
Whence we conclude that
\begin{align}\label{Pe5}
\|\mathcal{Q}(\dot{S}_mb\Delta u)\|_{L_t^1(\dot{B}_{p,1}^{\frac2p-1})}
\lesssim2^{m}\|b\|_{L_t^\infty(\dot{B}_{p,1}^{\frac{2}{p}})}\|u\|_{L_t^1(\dot{B}_{p,1}^{\frac{2}{p}})}
+\|b\|_{L_t^\infty(L^\infty)}\|g\|_{L_t^1(\dot{B}_{p,1}^{\frac{2}{p}})}.
\end{align}
Substituting \eqref{Pe4} and \eqref{Pe5} into \eqref{Pe3}, we arrive at
\begin{align}\label{Pe6}
\|\nabla\Pi\|_{L_t^1(\dot{B}_{p,1}^{\frac{2}{p}-1})}\lesssim&\big(1+\|a\|_{L_t^\infty(\dot{B}_{p,1}^{\frac{2}{p}})}\big)^2
\big\{\|f\|_{L_t^1(\dot{B}_{p,1}^{\frac{2}{p}-1})}+\|R\|_{L_t^1(\dot{B}_{p,1}^{\frac{2}{p}-1})}\nonumber\\
&+(1+\|b\|_{L_t^\infty(L^\infty)})\|g\|_{L_t^1(\dot{B}_{p,1}^{\frac{2}{p}})}
+2^{m}\|b\|_{L_t^\infty(\dot{B}_{p,1}^{\frac{2}{p}})}\|u\|_{L_t^1(\dot{B}_{p,1}^{\frac{2}{p}})}\nonumber\\
&+\|b-\dot{S}_m b\|_{L_t^\infty(\dot{B}_{p,1}^{\frac{2}{p}})}\|u\|_{L_t^1(\dot{B}_{p,1}^{\frac{2}{p}+1})}\big\}.
\end{align}
On the other hand, applying $\dot{\Delta}_j$ to \eqref{Pe1}, we arrive at
\begin{align*}
&\partial_t\dot{\Delta}_j u-\diverg\big((\mu+\dot{S}_m b)\dot{\Delta}_j\nabla u\big)+\dot{\Delta}_j\big((1+a)\nabla\Pi\big)\\
=&\dot{\Delta}_jf+\dot{\Delta}_j\mathrm{div}\big((b-\dot{S}_m b)\nabla u\big)+\diverg\big([\dot{\Delta}_j,\dot{S}_m b]\nabla u\big).
\end{align*}
Multiplying the $i$-th ($i=1,2$) equation by $|\dot{\Delta}_ju^i|^{p-2}\dot{\Delta}_ju^i$ and using a similar argument as \eqref{Ee1} leads to
\begin{align*}
\frac{d}{dt}\|\dot{\Delta}_j u\|_{L^p}+c\kappa2^{2j}\|\dot{\Delta}_j u\|_{L^p}
\lesssim&\|\dot{\Delta}_j f\|_{L^p}+2^j\|\dot{\Delta}_j\big((b-\dot{S}_m b)\nabla u\big)\|_{L^p}\nonumber\\
&+\|\diverg([\dot{\Delta}_j,\dot{S}_m b]\nabla u)\|_{L^p}+\|\dot{\Delta}_j\big((1+a)\nabla\Pi\big)\|_{L^p}.
\end{align*}
After time integration, multiplying $2^{(\frac{2}{p}-1)j}$ and summing up over $j$, we get
\begin{align}\label{Pe7}
&\|u\|_{\widetilde{L}_t^\infty(\dot{B}_{p,1}^{\frac{2}{p}-1})}+\|u\|_{L_t^1(\dot{B}_{p,1}^{\frac{2}{p}+1})}\nonumber\\
\lesssim&\|u_0\|_{\dot{B}_{p,1}^{\frac{2}{p}-1}}+\|f\|_{L_t^1(\dot{B}_{p,1}^{\frac{2}{p}-1})}
+\|b-\dot{S}_m b\|_{L_t^\infty(\dot{B}_{p,1}^{\frac{2}{p}})}\|u\|_{L_t^1(\dot{B}_{p,1}^{\frac{2}{p}+1})}\nonumber\\
&+2^{m}\|b\|_{L_t^\infty(\dot{B}_{p,1}^{\frac{2}{p}})}\|u\|_{L_t^1(\dot{B}_{p,1}^{\frac{2}{p}})}
+(1+\|a\|_{L_t^\infty(\dot{B}_{p,1}^{\frac{2}{p}})}\big)\|\nabla\Pi\|_{L_t^1(\dot{B}_{p,1}^{\frac{2}{p}-1})}.
\end{align}
where we used product laws and Lemma 5 in the appendix of \cite{Danchin AIF 2014}
\begin{align*}
\sum_{j\in\mathbb{Z}}2^{j(\frac{2}{p}-1)}\|\diverg([\dot{\Delta}_j,\dot{S}_m b]\nabla u)\|_{L_t^1(L^p)}
\lesssim2^{m}\|b\|_{L_t^\infty(\dot{B}_{p,1}^{\frac{2}{p}})}\|u\|_{L_t^1(\dot{B}_{p,1}^{\frac{2}{p}})}.
\end{align*}
Combining \eqref{Pe6} and \eqref{Pe7} and using \eqref{smallness}, we conclude the proof of \eqref{parabolic estimate1}.
\end{proof}

The following corollary will be used to prove the local well-posedness part of Theorem \ref{Theorem1.1}.
\begin{Corollary}\label{Corollary3.1}
Let $p,u_0,a,f,g,R$ be given in Proposition \ref{Proposition3.2} and $\|a\|_{L^\infty}\leq C_0$.
Let $(u,\nabla\Pi)\in C([0,T];\dot{B}_{p,1}^{\frac{2}{p}-1}(\R^2))\cap L_T^1(\dot{B}_{p,1}^{\frac{2}{p}+1}(\R^2))\times
L_T^1(\dot{B}_{p,1}^{\frac{2}{p}-1}(\R^2))$ solve
\begin{eqnarray*}
\left\{\begin{aligned}
&\partial_t u-(1+a)\mathrm{div}\big(2\tilde{\mu}(a)\mathcal{M}(u)\big)+(1+a)\nabla\Pi=f,\\
&\diverg u=g,\\
&u|_{t=0}=u_0,
\end{aligned}\right.
\end{eqnarray*}
with some smooth, positive function $\tilde{\mu}(a)$.
Further, denote by $b=b(a)\stackrel{\mathrm{def}}{=}(1+a)\tilde{\mu}(a)-\tilde{\mu}(0)$ and $\lambda=\lambda(a)\stackrel{\mathrm{def}}{=}\int_0^a\tilde{\mu}(s)ds$.
If there exist some sufficiently small positive constant $c_0$ and some integer $m\in \mathbb{Z}$ such that
\begin{align}\label{smallness1}
\big(1+\|a\|_{L_T^\infty(\dot{B}_{p,1}^{\frac{2}{p}})}\big)^3\|b-\dot{S}_m b,\lambda-\dot{S}_m\lambda\|_{L_T^\infty(\dot{B}_{p,1}^{\frac{2}{p}})}\leq c_0,
\end{align}
then we have for $t\in[0,T]$
\begin{align}\label{parabolic estimate2}
&\|u\|_{\widetilde{L}_t^\infty(\dot{B}_{p,1}^{\frac{2}{p}-1})}+\|u\|_{L_t^1(\dot{B}_{p,1}^{\frac{2}{p}+1})}
+\|\nabla\Pi\|_{L_t^1(\dot{B}_{p,1}^{\frac{2}{p}-1})}\nonumber\\
\lesssim&\|u_0\|_{\dot{B}_{p,1}^{\frac{2}{p}-1}}+\big(1+\|a\|_{L_t^\infty(\dot{B}_{p,1}^{\frac{2}{p}})}\big)^3
\big\{\|f\|_{L_t^1(\dot{B}_{p,1}^{\frac{2}{p}-1})}+\|R\|_{L_t^1(\dot{B}_{p,1}^{\frac{2}{p}-1})}\nonumber\\
&+\big(1+\|a\|_{L_t^\infty(\dot{B}_{p,1}^{\frac{2}{p}})}\big)\|g\|_{L_t^1(\dot{B}_{p,1}^{\frac{2}{p}})}
+2^{m}\|a\|_{L_t^\infty(\dot{B}_{p,1}^{\frac{2}{p}})}\|u\|_{L_t^1(\dot{B}_{p,1}^{\frac{2}{p}})}\big\}.
\end{align}
\end{Corollary}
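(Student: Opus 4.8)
The idea is to recast the quasilinear Stokes system of the Corollary into the semilinear form \eqref{stokes} treated in Proposition \ref{Proposition3.2}, with a suitable choice of the coefficient $b$ and the forcing terms, and then simply invoke \eqref{parabolic estimate1}. The key algebraic observation is the pointwise identity
\begin{align*}
(1+a)\,\diverg\big(2\tilde{\mu}(a)\mathcal{M}(u)\big)
=\diverg\big((\mu+b)\nabla u\big)+\text{(lower-order terms)},
\end{align*}
where $\mu=\tilde{\mu}(0)$ and $b=b(a)=(1+a)\tilde{\mu}(a)-\tilde{\mu}(0)$, and where the correction is generated by $\nabla a$, $\nabla\lambda$ (with $\lambda=\lambda(a)=\int_0^a\tilde{\mu}(s)\,ds$, so that $\nabla\lambda=\tilde{\mu}(a)\nabla a$), and by $\diverg u=g$. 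First I would expand $(1+a)\diverg(2\tilde{\mu}(a)\mathcal{M}(u))$ using $2\diverg\mathcal{M}(u)=\Delta u+\nabla\diverg u=\Delta u+\nabla g$ and the product rule, writing everything in terms of $(1+a)\tilde{\mu}(a)=\mu+b$ so that the principal part becomes exactly $\diverg((\mu+b)\nabla u)$, plus terms of the schematic form $(1+a)\nabla\tilde{\mu}(a)\cdot\nabla u$, which via $\nabla\tilde{\mu}(a)=\tilde{\mu}'(a)\nabla a$ and the chain rule I would rewrite so that the rough part only involves $\nabla\lambda$ and $\nabla b$ (note $\nabla b=\tilde{\mu}(a)\nabla a+\nabla(a\tilde{\mu}(a))$ is controlled by $\nabla a$ and $\nabla\lambda$ up to smooth functions of $a$), plus a term $(\mu+b)\nabla g$.

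Having done this, the system of the Corollary becomes precisely \eqref{stokes} with the same $u_0$, the same coefficient $a$, the coefficient $b$ as defined, the same $g$, and with the new right-hand side $\widetilde f$ and the $R$-type term absorbing $f$, $R$, the $(\mu+b)\nabla g$ contribution and the lower-order commutator-type terms $\nabla\lambda\cdot\nabla u$, $\nabla b\cdot\nabla u$ that arose from the expansion. The smallness hypothesis \eqref{smallness1} on $\|b-\dot S_m b,\lambda-\dot S_m\lambda\|_{L_T^\infty(\dot B_{p,1}^{2/p})}$ is exactly what is needed to guarantee \eqref{smallness}: indeed $b-\dot S_m b$ is small by assumption, and the extra terms involving $\lambda-\dot S_m\lambda$ are needed precisely because the lower-order term $\nabla\lambda\cdot\nabla u$ must also be split as $\nabla\dot S_m\lambda\cdot\nabla u+\nabla(\lambda-\dot S_m\lambda)\cdot\nabla u$, the high-frequency piece being absorbed on the left. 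I would therefore apply \eqref{parabolic estimate1}, estimate the new forcing terms by the product laws of Lemma \ref{Product Laws}: terms like $\nabla\dot S_m\lambda\cdot\nabla u$ and $\nabla\dot S_m b\cdot\nabla u$ cost a factor $2^m\|a\|_{L_t^\infty(\dot B_{p,1}^{2/p})}\|u\|_{L_t^1(\dot B_{p,1}^{2/p})}$ (using $\|\tilde\mu(a)\|,\|\lambda\|,\|b\|\lesssim_{\kappa,C_0}\|a\|$ since $\|a\|_{L^\infty}\le C_0$ and $\tilde\mu$ is smooth), the term $(\mu+b)\nabla g$ costs $(1+\|a\|_{L_t^\infty(\dot B_{p,1}^{2/p})})\|g\|_{L_t^1(\dot B_{p,1}^{2/p})}$, and $f,R$ pass through unchanged; collecting these gives exactly \eqref{parabolic estimate2}.

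The main obstacle I anticipate is the careful bookkeeping in the expansion of $(1+a)\diverg(2\tilde\mu(a)\mathcal{M}(u))$: one must be sure that every term that is \emph{not} of principal order $\diverg((\mu+b)\nabla u)$ is either (i) a genuinely lower-order term of the form (smooth function of $a$)$\times\nabla a\times\nabla u$ that the product laws can handle at regularity $\dot B_{p,1}^{2/p-1}$, or (ii) absorbable into the $g$-term, and in particular that after the Littlewood--Paley splitting only the high-frequency pieces $b-\dot S_m b$ and $\lambda-\dot S_m\lambda$ multiply the top-order quantity $\|u\|_{L_t^1(\dot B_{p,1}^{2/p+1})}$, so that \eqref{smallness1} suffices to close the estimate. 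A secondary technical point is controlling the nonlinear composition functions $\tilde\mu(a)$, $b(a)$, $\lambda(a)$ in $\dot B_{p,1}^{2/p}$ and verifying that their high-frequency remainders are controlled by those of $a$ together with the smallness in \eqref{smallness1}; this is standard composition estimates in Besov spaces given $\|a\|_{L^\infty}\le C_0$ and $1+a\ge\kappa>0$, but it is where the hypothesis $\|a\|_{L^\infty}\le C_0$ of the Corollary is used. Apart from this bookkeeping, the argument is a direct reduction to Proposition \ref{Proposition3.2}.
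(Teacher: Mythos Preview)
Your proposal is correct and follows essentially the same approach as the paper: rewrite $(1+a)\diverg(2\tilde\mu(a)\mathcal{M}(u))=\diverg((\tilde\mu(0)+b)\nabla u)+(\tilde\mu(0)+b)\nabla g+\nabla u\cdot\nabla b-2\mathcal{M}(u)\cdot\nabla\lambda$ (the clean identity here comes from $(1+a)\nabla\tilde\mu(a)=\nabla b-\nabla\lambda$), then apply Proposition~\ref{Proposition3.2} and estimate the extra forcing terms by product laws with the $\dot S_m$ splitting, using composition estimates $\|b,\lambda\|_{\dot B_{p,1}^{2/p}}\lesssim\|a\|_{\dot B_{p,1}^{2/p}}$ (which is where $\|a\|_{L^\infty}\le C_0$ enters). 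The bookkeeping you flag as the main obstacle is exactly the content of the paper's short proof.
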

\begin{proof} Since $b(a)$ and $\lambda(a)$ are smooth functions with $b(0)=\lambda(0)=0$, we get by applying Theorem 2.61 in \cite{BCD} that
\begin{align}\label{coro1}
\|b,\lambda\|_{\dot{B}_{p,1}^{\frac{2}{p}}}
\lesssim(1+\|a\|_{L^\infty})^{[\frac2p]+1}\|b',\lambda'\|_{W^{[\frac2p]+1,\infty}}\|a\|_{\dot{B}_{p,1}^{\frac{2}{p}}}
\lesssim\|a\|_{\dot{B}_{p,1}^{\frac{2}{p}}}.
\end{align}
Thanks to $\diverg u=g$, we rewrite the equation for $u$ as follow:
\begin{align}\label{coro2}
&\partial_t u-\mathrm{div}\big((\tilde{\mu}(0)+b)\nabla u\big)+(1+a)\nabla\Pi\nonumber\\
&=\tilde{f}\stackrel{\mathrm{def}}{=}f+(\tilde{\mu}(0)+b)\nabla g+\nabla u\cdot\nabla b-2\mathcal{M}(u)\cdot\nabla\lambda,
\end{align}
where $(\nabla u\cdot\nabla b)_i\stackrel{\mathrm{def}}{=}\partial_i u\cdot\nabla b=\partial_i u^j\partial_j b$.
While applying \eqref{coro1} and product laws in Besov spaces gives rise to
\begin{align*}
\|\tilde{f}\|_{L_t^1(\dot{B}_{p,1}^{\frac{2}{p}-1})}\lesssim&\|f\|_{L_t^1(\dot{B}_{p,1}^{\frac{2}{p}-1})}
+\big(1+\|a\|_{L_t^\infty(\dot{B}_{p,1}^{\frac{2}{p}})}\big)\|g\|_{L_t^1(\dot{B}_{p,1}^{\frac{2}{p}})}
+2^m\|a\|_{L_t^\infty(\dot{B}_{p,1}^{\frac{2}{p}})}\|u\|_{L_t^1(\dot{B}_{p,1}^{\frac{2}{p}})}\nonumber\\
&+\|b-\dot{S}_ma,\lambda-\dot{S}_m\lambda\|_{L_t^\infty(\dot{B}_{p,1}^{\frac{2}{p}})}\|u\|_{L_t^1(\dot{B}_{p,1}^{\frac{2}{p}+1})},
\end{align*}
from which and \eqref{smallness1}, we apply Proposition \ref{Proposition3.2} to \eqref{coro2} to conclude the proof of \eqref{parabolic estimate2}.
\end{proof}

\section{\bf Local well-posedness of \eqref{Model2}}\label{Section4}

\subsection{Local existence}
\noindent{\bf Step 1.} Construction of smooth approximate solutions.

Firstly, there exists a sequence $\{(a_0^n,\tilde{u}_0^n)\}_{n\in\mathbb{N}}\subset\mathscr{S}(\R^2)$ such that $(a_0^n,\tilde{u}_0^n)$
converges to $(a_0,u_0)$ in $\dot{B}_{p,1}^{\frac2p}(\R^2)\times\dot{B}_{p,1}^{\frac2p-1}(\R^2)$.
Define $u_0^n\stackrel{\mathrm{def}}{=}\mathbb{P}\tilde{u}_0^n$ so that $\diverg u_0^n=0$.
Then $u_0^n$ belongs to $H^\infty(\R^2)$ and converges to $u_0$ in $\dot{B}_{p,1}^{\frac2p-1}(\R^2)$.
Furthermore, we could assume that
\begin{align*}
\|a_0^n\|_{L^\infty}\leq2\|a_0\|_{L^\infty},\ \|a_0^n\|_{\dot{B}_{p,1}^{\frac2p}}\leq&2\|a_0\|_{\dot{B}_{p,1}^{\frac2p}},\
\|u_0^n\|_{\dot{B}_{p,1}^{\frac2p-1}}\leq2\|u_0\|_{\dot{B}_{p,1}^{\frac2p-1}},\\
\rm{and}\ \ 1+a_0^n=&1+a_0+(a_0^n-a_0)\geq\frac12\kappa.
\end{align*}
Whence applying Theorem 1.2 of \cite{Abidi RMI 2007} ensures that the system \eqref{Model2} with the initial data
$(a_0^n,u_0^n)$ admits a unique local in time solution $(a^n,u^n,\nabla\Pi^n)$ belonging to
\begin{align*}
C([0,T^n);H^{\alpha+1}(\R^2))&\times\big(C([0,T^n);H^{\alpha}(\R^2))\cap\widetilde{L}_{loc}^1(0,T^n;H^{\alpha+2}(\R^2))\big)\\
&\times\widetilde{L}_{loc}^1(0,T^n;H^{\alpha}(\R^2))\ \ \rm{with}\ \  \alpha>0.
\end{align*}
Moreover, we deduce from the transport equation of \eqref{Model2} that
\begin{align}\label{s1an}
1+\inf_{(t,x)\in[0,T^n)\times\R^2}a^n(t,x)=1+\inf_{y\in\R^2}a_0^n(y)\geq\frac12\kappa,\nonumber\\
\|a^n(t)\|_{L^\infty}=\|a_0^n\|_{L^\infty}\leq2\|a_0\|_{L^\infty},\ \ \forall t\in[0,T^n).
\end{align}

\noindent{\bf Step 2.} Uniform estimates to the approximate solutions.

Next, we shall prove that there exists a positive time $T<\inf_{n\in\mathbb{N}}T^n$
such that $(a^n,u^n,\nabla\Pi^n)$ is uniformly bounded in the space
$$
E_T\stackrel{\mathrm{def}}{=}\widetilde{L}_T^\infty(\dot{B}_{p,1}^{\frac2p})
\times\big(\widetilde{L}_T^\infty(\dot{B}_{p,1}^{\frac2p-1})\cap L_T^1(\dot{B}_{p,1}^{\frac2p+1})\big)
\times L_T^1(\dot{B}_{p,1}^{\frac2p-1}).
$$

For this, let $(u_L(t),u_L^n(t)\big)\stackrel{\mathrm{def}}{=}e^{\mu t\Delta}\big(u_0,u_0^n\big)$
with $\mu\stackrel{\mathrm{def}}{=}\tilde{\mu}(0)$. Then it is easy to observe that
\begin{align}\label{s2ufn1}
\|u_L^n\|_{\widetilde{L}^\infty(\R^+;\dot{B}_{p,1}^{\frac2p-1})}+\mu\|u_L^n\|_{L^1(\R^+;\dot{B}_{p,1}^{\frac2p+1})}
\lesssim\|u_0^n\|_{\dot{B}_{p,1}^{\frac2p-1}}\lesssim\|u_0\|_{\dot{B}_{p,1}^{\frac2p-1}},
\end{align}
and
\begin{align*}
\|u_L^n\|_{L_T^1(\dot{B}_{p,1}^{\frac2p+1})}\leq\|u_L\|_{L_T^1(\dot{B}_{p,1}^{\frac2p+1})}+C\|u_0^n-u_0\|_{\dot{B}_{p,1}^{\frac2p-1}}.
\end{align*}
Whence for any $\varepsilon>0$, there exist a number $k=k(\varepsilon)\in\mathbb{N}$
and a positive time $T=T(\varepsilon,u_0)$ such that
\begin{align}\label{s2ufn2}
\sup_{n\geq k}\|u_L^n\|_{L_{T}^1(\dot{B}_{p,1}^{\frac2p+1})}\leq\varepsilon.
\end{align}
Denote by $\bar{u}^n\stackrel{\mathrm{def}}{=}u^n-u_L^n$. Then the system for $(a^n,\bar{u}^n,\nabla\Pi^n)$ reads
\begin{eqnarray}\label{Modeln}
\left\{\begin{aligned}
&\partial_t a^n+(u_L^n+\bar{u}^n)\cdot\nabla a^n =0,\\
&\partial_t \bar{u}^n-(1+a^n)\diverg\big(2\tilde{\mu}(a^n)\mathcal{M}(\bar{u}^n)\big)+(1+a^n)\nabla\Pi^n=F_n,\\
&\diverg \bar{u}^n=0,\\
&(a^n,\bar{u}^n)|_{t=0}=(a_0^n,0),
\end{aligned}\right.
\end{eqnarray}
where
\begin{align*}
F_n=&-\bar{u}^n\cdot\nabla \bar{u}^n-u_L^n\cdot\nabla u_L^n-\diverg(\bar{u}^n\otimes u_L^n+u_F^n\otimes\bar{u}^n)\\
&+\diverg\big(2(\tilde{\mu}(a^n)-\tilde{\mu}(0))\mathcal{M}(u_L^n)\big)+a^n\diverg\big(2\tilde{\mu}(a^n)\mathcal{M}(u_L^n)\big).
\end{align*}

For notational simplicity, we denote by $A^n(t)\stackrel{\mathrm{def}}{=}\|a^n\|_{\widetilde{L}_t^\infty(\dot{B}_{p,1}^{\frac2p})}$ and
$$
Z^n(t)\stackrel{\mathrm{def}}{=}\|\bar{u}^n\|_{\widetilde{L}_t^\infty(\dot{B}_{p,1}^{\frac2p-1})}
+\|\bar{u}^n\|_{L_t^1(\dot{B}_{p,1}^{\frac2p+1})}+\|\nabla\Pi^n\|_{L_t^1(\dot{B}_{p,1}^{\frac2p-1})}.
$$
Then thanks to \eqref{s2ufn1}, we get by applying product laws and interpolation inequality in Besov spaces that
\begin{align*}
\|\bar{u}^n\cdot\nabla \bar{u}^n+u_L^n\cdot\nabla u_L^n\|_{L_t^1(\dot{B}_{p,1}^{\frac2p-1})}
\lesssim&\big(Z^n(t)\big)^2+\|u_0\|_{\dot{B}_{p,1}^{\frac2p-1}}\|u_L^n\|_{L_t^1(\dot{B}_{p,1}^{\frac2p+1})},\\
\|\diverg(\bar{u}^n\otimes u_L^n+u_L^n\otimes\bar{u}^n)\|_{L_t^1(\dot{B}_{p,1}^{\frac2p-1})}\lesssim&\|u_L^n\|_{L_t^2(\dot{B}_{p,1}^{\frac2p})}Z^n(t).
\end{align*}
Along the same line, one has
\begin{align*}
\|\diverg\big(2(\tilde{\mu}(a^n)-\tilde{\mu}(0))\mathcal{M}(u_L^n)\big)\|_{L_t^1(\dot{B}_{p,1}^{\frac2p-1})}
\lesssim&A^n(t)\|u_L^n\|_{L_t^1(\dot{B}_{p,1}^{\frac2p+1})},\\
\|a^n\diverg\big(2\tilde{\mu}(a^n)\mathcal{M}(u_L^n)\big)\|_{L_t^1(\dot{B}_{p,1}^{\frac2p-1})}
\lesssim&A^n(t)(1+A^n(t))\|u_L^n\|_{L_t^1(\dot{B}_{p,1}^{\frac2p+1})}.
\end{align*}
As a consequence, we obtain
\begin{align}\label{s2Fn}
\|F_n\|_{L_t^1(\dot{B}_{p,1}^{\frac2p-1})}\lesssim&\big(Z^n(t)+\|u_L^n\|_{L_t^2(\dot{B}_{p,1}^{\frac2p})}\big)Z^n(t)\nonumber\\
&+\big(\|u_0\|_{\dot{B}_{p,1}^{\frac2p-1}}+A^n(t)(1+A^n(t))\big)\|u_L^n\|_{L_t^1(\dot{B}_{p,1}^{\frac2p+1})}.
\end{align}
Denote by $b^n\stackrel{\mathrm{def}}{=}(1+a^n)\tilde{\mu}(a^n)-\tilde{\mu}(0)$, $\lambda^n\stackrel{\mathrm{def}}{=}\int_0^{a^n}\tilde{\mu}(s)ds$.
Applying Corollary \ref{Corollary3.1} to the $\bar{u}^n$ equation of \eqref{Modeln}, we get for $t\in[0,T]\subset[0,T^n)$ that
\begin{align}\label{parabolic estimaten}
Z^n(t)\lesssim\big(1+A^n(t)\big)^3\big(\|F_n\|_{L_t^1(\dot{B}_{p,1}^{\frac{2}{p}-1})}+2^mA^n(t)\|\bar{u}^n\|_{L_t^1(\dot{B}_{p,1}^{\frac{2}{p}})}\big),
\end{align}
provided that
\begin{align}\label{smallnessn}
\big(1+A^n(T)\big)^3\|b^n-\dot{S}_mb^n,\lambda^n-\dot{S}_m\lambda^n\|_{L_T^\infty(\dot{B}_{p,1}^{\frac{2}{p}})}\leq c_0
\end{align}
for some sufficiently small positive constant $c_0$ and some integer $m\in \mathbb{Z}$.
Substituting \eqref{s2Fn} into \eqref{parabolic estimaten} and using interpolation, we arrive at
\begin{align}\label{s2Zn}
Z^n(t)\leq&C\big(1+A^n(t)\big)^4\big\{\big(Z^n(t)+\|u_L^n\|_{L_t^2(\dot{B}_{p,1}^{\frac2p})}+2^mt^\frac12\big)Z^n(t)\nonumber\\
&+\big(\|u_0\|_{\dot{B}_{p,1}^{\frac2p-1}}+A^n(t)\big)\|u_L^n\|_{L_t^1(\dot{B}_{p,1}^{\frac2p+1})}\big\}.
\end{align}
On the other hand, applying \eqref{transport estimate1} to the transport equation of \eqref{Modeln}, we have for $t\in[0,T^n)$
\begin{align}\label{s2An}
A^n(t)\leq C\|a_0\|_{\dot{B}_{p,1}^{\frac2p}}\exp\left(C\big(Z^n(t)+\|u_0\|_{\dot{B}_{p,1}^{\frac2p-1}}\big)\right).
\end{align}
However, for any function $\chi\in\mathcal{D}(\R)$ vanishing at 0,
the composite function $\chi(a^n)$ with initial data $\chi(a_0^n)$ also solves the renormalized transport equation
$$\partial_t\chi(a^n)+(u_L^n+\bar{u}^n)\cdot\nabla \chi(a^n)=0.$$
Then applying \eqref{transport estimate2} to the above equation gives rise to
\begin{align*}
&\|\chi(a^n)-\dot{S}_m \chi(a^n)\|_{\widetilde{L}_t^\infty(\dot{B}_{p,1}^{\frac2p})}\nonumber\\
\leq&\sum_{j\geq m}2^{\frac{2j}{p}}\|\dot{\Delta}_j \chi(a_0^n)\|_{L^p}+\|\chi(a_0^n)\|_{\dot{B}_{p,1}^{\frac2p}}
\left(\exp\big(CZ^n(t)+C\|u_L^n\|_{L_t^1(\dot{B}_{p,1}^{\frac2p+1})}\big)-1\right)\nonumber\\
\leq&\sum_{j\geq m}2^{\frac{2j}{p}}\|\dot{\Delta}_j \chi(a_0)\|_{L^p}
+C\big(1+\|a_0\|_{\dot{B}_{p,1}^{\frac2p}}\big)\|a_0^n-a_0\|_{\dot{B}_{p,1}^{\frac2p}}\nonumber\\
&+C\|a_0\|_{\dot{B}_{p,1}^{\frac2p}}\left(\exp\big(CZ^n(t)+C\|u_L^n\|_{L_t^1(\dot{B}_{p,1}^{\frac2p+1})}\big)-1\right),
\end{align*}
where we used
\begin{align*}
\|\chi(a_0^n)-\chi(a_0)\|_{\dot{B}_{p,1}^{\frac2p}}
=&\left\|(a_0^n-a_0)\int_0^1\chi'(\tau a_0^n+(1-\tau)a_0)d\tau\right\|_{\dot{B}_{p,1}^{\frac2p}}\\
\leq&C\big(1+\|a_0\|_{\dot{B}_{p,1}^{\frac2p}}\big)\|a_0^n-a_0\|_{\dot{B}_{p,1}^{\frac2p}}.
\end{align*}
As a consequence, we obtain for $t\in[0,T^n)$
\begin{align}\label{s2bn}
&\|b^n-\dot{S}_mb^n,\lambda^n-\dot{S}_m\lambda^n\|_{\widetilde{L}_t^\infty(\dot{B}_{p,1}^{\frac{2}{p}})}\nonumber\\
\leq&\sum_{j\geq m}2^{\frac{2j}{p}}\|\dot{\Delta}_jb_0,\dot{\Delta}_j\lambda_0\|_{L^p}
+C\big(1+\|a_0\|_{\dot{B}_{p,1}^{\frac2p}}\big)\|a_0^n-a_0\|_{\dot{B}_{p,1}^{\frac2p}}\nonumber\\
&+C\|a_0\|_{\dot{B}_{p,1}^{\frac2p}}\left(\exp\big(CZ^n(t)+C\|u_L^n\|_{L_t^1(\dot{B}_{p,1}^{\frac2p+1})}\big)-1\right),
\end{align}
with $b_0\stackrel{\mathrm{def}}{=}(1+a_0)\tilde{\mu}(a_0)-\tilde{\mu}(0)$, $\lambda_0\stackrel{\mathrm{def}}{=}\int_0^{a_0}\tilde{\mu}(s)ds$.

Next, for any $n\in\mathbb{N}$, we define
\begin{align}\label{s2Tnstar}
T_*^n\stackrel{\mathrm{def}}{=}\sup\{t\in(0,T^n):\ Z^n(t)\leq 2\varepsilon_0\}
\end{align}
with $\varepsilon_0\in(0,\frac12)$ to be determined. We shall prove $\inf_{n\in\mathbb{N}}T_*^n>0$.

Firstly, we deduce from \eqref{s2An} and \eqref{s2Tnstar} for $t\leq T_*^n$ that
\begin{align}\label{s2An1}
A^n(t)\leq C\|a_0\|_{\dot{B}_{p,1}^{\frac2p}}\exp\left(C\big(1+\|u_0\|_{\dot{B}_{p,1}^{\frac2p-1}}\big)\right)
\stackrel{\mathrm{def}}{=}A_0.
\end{align}
Notice that $(b_0,\lambda_0)\in\big(\dot{B}_{p,1}^{\frac2p}(\R^2)\big)^2$, there exist $m=m(c_0)\in\mathbb{Z}$ and $n_0=n_0(c_0)\in\mathbb{N}$ such that
\begin{align}
(1+A_0)^3\left(\sum_{j\geq m}2^{\frac{2j}{p}}\|\dot{\Delta}_jb_0,\dot{\Delta}_j\lambda_0\|_{L^p}
+C\big(1+\|a_0\|_{\dot{B}_{p,1}^{\frac2p}}\big)\sup_{n\geq n_0}\|a_0^n-a_0\|_{\dot{B}_{p,1}^{\frac2p}}\right)\leq\frac12 c_0.
\end{align}
Yet thanks to \eqref{s2ufn2}, taking $\varepsilon_0$ and $T_0$ small enough and $n_1\geq n_0$ large enough ensures
\begin{align}\label{s2smallness}
C(1+A_0)^3\|a_0\|_{\dot{B}_{p,1}^{\frac2p}}\left(\exp\big(2C\varepsilon_0
+C\sup_{n\geq n_1}\|u_L^n\|_{L_{T_0}^1(\dot{B}_{p,1}^{\frac2p+1})}\big)-1\right)\leq\frac12c_0.
\end{align}
Combining \eqref{s2An1}--\eqref{s2smallness} implies that \eqref{smallnessn} with $T=\min\{T_*^n,T_0\}$ is fulfilled for any $n\geq n_1$.
Without loss of generality, we may assume that $T_*^n\leq T_0$. Then for any $t\leq T_*^n$, we deduce from \eqref{s2Zn} that
\begin{align}\label{s2Zn1}
Z^n(t)\leq&C\big(1+A_0\big)^4
\big\{\big(2\varepsilon_0+\|u_L^n\|_{L_t^2(\dot{B}_{p,1}^{\frac2p})}+2^mt^\frac12\big)Z^n(t)\nonumber\\
&+\big(\|u_0\|_{\dot{B}_{p,1}^{\frac2p-1}}+A_0\big)\|u_L^n\|_{L_t^1(\dot{B}_{p,1}^{\frac2p+1})}\big\}.
\end{align}
Finally, taking $\varepsilon_0$ and $T_1$ small enough and $n_2\geq n_1$ large enough ensures for any $n\geq n_2$
\begin{align*}
C(1+A_0\big)^4\big(2\varepsilon_0+\|u_L^n\|_{L_{T_1}^2(\dot{B}_{p,1}^{\frac2p})}+2^mT_1^\frac12\big)\leq\frac12,
\end{align*}
and
\begin{align*}
2C\big(1+A_0\big)^4\big(\|u_0\|_{\dot{B}_{p,1}^{\frac2p-1}}+A_0\big)\|u_L^n\|_{L_{T_1}^1(\dot{B}_{p,1}^{\frac2p+1})}\leq\varepsilon_0,
\end{align*}
which together with \eqref{s2Zn1} implies
$$
Z^n(t)\leq\varepsilon_0,\ \ \forall t\leq\min(T_*^n,T_1),\ n\geq n_2.
$$
However, by the definition of $T_*^n$, we eventually conclude $T_*^n\geq T_1$ and $\sup_{n\geq n_2}Z^n(T_1)\leq \varepsilon_0$,
which along with \eqref{s2ufn1} and \eqref{s2An1} ensures that
\begin{align}\label{s2ubdd}
(a^n,u^n,\nabla\Pi^n)\ \ {\rm is\ uniformly\ bounded\ in}\ E_{T_1}.
\end{align}

\noindent{\bf Step 3.} Convergence.

Thanks to \eqref{s2ubdd}, we can repeat the compactness argument in Step 3 to the proof of Theorem 5.1 in \cite{Danchin PRSES 2003} to conclude that
$(a^n,u^n,\nabla\Pi^n)$ converges to some limit $(a,u,\nabla\Pi)$ which satisfies \eqref{soln regularity} and solves the system \eqref{Model2}
on $[0,T_1]$. Moreover, there exist some sufficiently small positive constant $c_0$ and some integer $m\in \mathbb{Z}$ such that
\begin{align}\label{s3smallness}
\|a-\dot{S}_m a\|_{L_{T_1}^\infty(\dot{B}_{p,1}^{\frac{2}{p}})}\leq c_0.
\end{align}

\subsection{Uniqueness}

As in \cite{DM CPAM 2012,DM ARMA 2013,Danchin AIF 2014}, we apply Lagrangian approach to prove the uniqueness part of Theorem \ref{Theorem1.1}.

Before going further, we give some notations. For a matrix $A=(A^{ij}):\R^2\rightarrow\R^{2\times2}$, we denote $A^T$ its transpose matrix,
$\mathrm{Tr}(A)$ its trace, $\det(A)$ its determinant and $(\diverg A)^j\stackrel{\mathrm{def}}{=}\partial_1A^{1j}+\partial_2A^{2j}$.
For a $C^1$ vector field $u:\R^2\rightarrow\R^2$, denote $(Du)^{ij}\stackrel{\mathrm{def}}{=}\partial_ju^i$,
$\nabla u\stackrel{\mathrm{def}}{=}(Du)^T$ and $\mathcal{M}_{A}(u)\stackrel{\mathrm{def}}{=}\frac12\big(Du\cdot A+A^T\cdot\nabla u\big)$.

Let $(a,u,\nabla\Pi)$ be a solution to \eqref{Model2} on $[0,T]$ and satisfy \eqref{soln regularity}.
By virtue of Cauchy-Lipschitz theorem, the unique trajectory $X(t,y)$ of $u$ is determined by
\begin{align*}
X(t,y)=y+\int_0^t u(\tau,X(\tau,y))d\tau,\ \ t\in[0,T]
\end{align*}
such that $X(t,\cdot)$ is a $C^1$-diffeomorphism over $\R^2$. Denote $A(t,y)\stackrel{\mathrm{def}}{=}\big(D_y X(t,y)\big)^{-1}$.
Then the divergence free condition for $u$ is equivalent to $\det(A)\equiv1$. To obtain the Lagrangian formulation of \eqref{Model2}, we define
\begin{align}\label{Lag coor}
(\eta,v,P)(t,y)\stackrel{\mathrm{def}}{=}(a,u,\Pi)\big(t,X(t,y)\big).
\end{align}
If $\|\nabla u\|_{L_T^1(\dot{B}_{p,1}^{\frac2p})}$ is sufficiently small,
then applying Proposition 8 in the appendix of \cite{Danchin AIF 2014} implies that $(\eta,v,\nabla P)$
belongs to the same functional space as $(a,u,\nabla\Pi)$. On the other hand, using the chain rule, we easily deduce that
\begin{align*}
\big(\partial_t a+u\cdot\nabla_x a\big)\big(t,X(t,y)\big)=&\partial_t \eta(t,y),\\
\big(\partial_t u+u\cdot\nabla_x u\big)\big(t,X(t,y)\big)=&\partial_t v(t,y),\\
\nabla_x\Pi\big(t,X(t,y)\big)=&A^{T}\nabla_y P(t,y).
\end{align*}
While applying Lemma A.1 in the appendix of \cite{DM CPAM 2012} gives
\begin{align}\label{divu}
\mathrm{div}_x u\big(t,X(t,y)\big)=\mathrm{Tr}(D_y v\cdot A)(t,y)=\mathrm{div}_y(Av)(t,y),
\end{align}
and
$$
\mathrm{div}_x\big(\tilde{\mu}(a)\mathcal{M}(u)\big)\big(t,X(t,y)\big)=\mathrm{div}_y\big(\tilde{\mu}(a_0)A\mathcal{M}_A(v)\big)(t,y).
$$
Whence we deduce from \eqref{Model2} that $\eta(t,\cdot)\equiv a_0$ and $(v,\nabla P)$ solves
\begin{eqnarray*}
\left\{\begin{aligned}
&\partial_t v-(1+a_0)\mathrm{div}\big(2\tilde{\mu}(a_0)A\mathcal{M}_{A}(v)\big)+(1+a_0)A^T\nabla P=0,\\
&\diverg(A v)=0,\\
&v|_{t=0}=u_0.
\end{aligned}\right.
\end{eqnarray*}

Now let $(a_i,u_i,\nabla\Pi_i)$ ($i=1,2$) be two solutions to \eqref{Model2} and $(\eta_i,v_i,P_i)$ be determined by \eqref{Lag coor}.
Denote $(\delta v,\delta P,\delta A)\stackrel{\mathrm{def}}{=}(v_2-v_1,P_2-P_1,A_2-A_1)$, where
\begin{align*}
A_i(t,y)\stackrel{\mathrm{def}}{=}\left(\mathrm{Id}+\int_0^tDv_i(\tau,y)d\tau\right)^{-1},\ \ \mathrm{for}\ i=1,2.
\end{align*}
Then the system for $(\delta v,\nabla\delta P)$ reads
\begin{eqnarray}\label{Modeldv}
\left\{\begin{aligned}
&\partial_t\delta v-(1+a_0)\mathrm{div}\big(2\tilde{\mu}(a_0)\mathcal{M}(\delta v)\big)+(1+a_0)\nabla\delta P=(1+a_0)\delta F,\\
&\diverg\delta v=g,\\
&\delta v|_{t=0}=0,
\end{aligned}\right.
\end{eqnarray}
where $g\stackrel{\mathrm{def}}{=}\diverg\big((\mathrm{Id}-A_2)\delta v-\delta A v_1\big)$,
$R\stackrel{\mathrm{def}}{=}-\partial_tA_2\delta v+(\mathrm{Id}-A_2)\partial_t\delta v-\partial_t\delta A v_1-\delta A \partial_tv_1$,
and $\delta F\stackrel{\mathrm{def}}{=}\sum_{k=1}^{6}\delta F_k$ with
\begin{align*}
\delta F_1\stackrel{\mathrm{def}}{=}&(\mathrm{Id}-A_2^T)\nabla\delta P, &\delta F_2\stackrel{\mathrm{def}}{=}&-\delta A^T\nabla P_1,\\
\delta F_3\stackrel{\mathrm{def}}{=}&\mathrm{div}\big(2\tilde{\mu}(a_0)(A_2-\mathrm{Id})\mathcal{M}(\delta v)\big),
&\delta F_4\stackrel{\mathrm{def}}{=}&\mathrm{div}\big(2\tilde{\mu}(a_0)A_2\mathcal{M}_{A_2-\mathrm{Id}}(\delta v)\big),\\
\delta F_5\stackrel{\mathrm{def}}{=}&\mathrm{div}\big(2\tilde{\mu}(a_0)A_2\mathcal{M}_{\delta A}(v_1)\big),
&\delta F_6\stackrel{\mathrm{def}}{=}&\mathrm{div}\big(2\tilde{\mu}(a_0)\delta A\mathcal{M}_{A_1}(v_1)\big).
\end{align*}

In the sequel, we shall take $T$ to be so small that
\begin{align*}
\int_0^T\|Dv_i(t)\|_{\dot{B}_{p,1}^{\frac{2}{p}}}dt\leq c,\ \ i=1,2,
\end{align*}
for some small enough constant $c$. Then the definition of $A_i$ implies that
\begin{align*}
A_i(t,y)=\mathrm{Id}+\sum_{k=1}^{\infty}(-1)^k\left(\int_0^tDv_i(\tau,y)d\tau\right)^k,\ \ i=1,2.
\end{align*}
Moreover, as proved in the appendix of \cite{DM CPAM 2012}, we have the following estimates:
\begin{align}
\|A_i-\mathrm{Id}\|_{L_t^\infty(\dot{B}_{p,1}^{\frac{2}{p}})}\lesssim&\|Dv_i\|_{L_t^1(\dot{B}_{p,1}^{\frac{2}{p}})},\ \ i=1,2,\label{u1}\\
\|\delta A\|_{L_t^\infty(\dot{B}_{p,1}^{\frac{2}{p}})}\lesssim&\|D\delta v\|_{L_t^1(\dot{B}_{p,1}^{\frac{2}{p}})},\label{u2}\\
\|\partial_tA_i\|_{\dot{B}_{p,1}^{\frac2p}}\lesssim&\|Dv_i\|_{\dot{B}_{p,1}^{\frac2p}},\ \ i=1,2,\label{u3}\\
\|\partial_t\delta A\|_{L_t^2(\dot{B}_{p,1}^{\frac2p-1})}
\lesssim&\|v_1,v_2\|_{L_t^2(\dot{B}_{p,1}^{\frac2p})}\|D\delta v\|_{L_t^1(\dot{B}_{p,1}^{\frac2p})}
+\|\delta v\|_{L_t^2(\dot{B}_{p,1}^{\frac2p})}.\label{u4}
\end{align}

We now choose $m$ to be so large that
\begin{align*}
\big(1+\|a_0\|_{\dot{B}_{p,1}^{\frac{2}{p}}}\big)^3\|b_0-\dot{S}_mb_0,\lambda_0-\dot{S}_m\lambda_0\|_{\dot{B}_{p,1}^{\frac{2}{p}}}\leq c_0,
\end{align*}
where $b_0\stackrel{\mathrm{def}}{=}(1+a_0)\tilde{\mu}(a_0)-\tilde{\mu}(0)$, $\lambda_0\stackrel{\mathrm{def}}{=}\int_0^{a_0}\tilde{\mu}(s)ds$.
Then \eqref{smallness1} is fulfilled so that we could apply Corollary \ref{Corollary3.1} to \eqref{Modeldv} to obtain
\begin{align}\label{udv}
&\|\delta v\|_{\widetilde{L}_t^\infty(\dot{B}_{p,1}^{\frac{2}{p}-1})}+\|\delta v\|_{L_t^1(\dot{B}_{p,1}^{\frac{2}{p}+1})}
+\|\nabla\delta P\|_{L_t^1(\dot{B}_{p,1}^{\frac{2}{p}-1})}\nonumber\\
\lesssim&\|\delta F\|_{L_t^1(\dot{B}_{p,1}^{\frac{2}{p}-1})}+\|R\|_{L_t^1(\dot{B}_{p,1}^{\frac{2}{p}-1})}
+\|g\|_{L_t^1(\dot{B}_{p,1}^{\frac{2}{p}})}+2^m\|\delta v\|_{L_t^1(\dot{B}_{p,1}^{\frac{2}{p}})}.
\end{align}
On the other hand, it is easy to deduce from \eqref{Modeldv} that
\begin{align}\label{uptdv}
\|\partial_t\delta v\|_{L_t^1(\dot{B}_{p,1}^{\frac{2}{p}-1})}\lesssim\|\delta v\|_{L_t^1(\dot{B}_{p,1}^{\frac{2}{p}+1})}
+\|\nabla\delta P\|_{L_t^1(\dot{B}_{p,1}^{\frac{2}{p}-1})}+\|\delta F\|_{L_t^1(\dot{B}_{p,1}^{\frac{2}{p}-1})}.
\end{align}
We denote $\delta E(t)\stackrel{\mathrm{def}}{=}\|\delta v\|_{\widetilde{L}_t^\infty(\dot{B}_{p,1}^{\frac{2}{p}-1})}
+\|\Delta\delta v,\nabla\delta P,\partial_t\delta v\|_{L_t^1(\dot{B}_{p,1}^{\frac{2}{p}-1})}.$
Summing up \eqref{udv} and \eqref{uptdv} and using interpolation in Besov spaces leads to
\begin{align}\label{udE}
\delta E(t)\lesssim\|\delta F\|_{L_t^1(\dot{B}_{p,1}^{\frac{2}{p}-1})}+\|R\|_{L_t^1(\dot{B}_{p,1}^{\frac{2}{p}-1})}
+\|g\|_{L_t^1(\dot{B}_{p,1}^{\frac{2}{p}})}+2^mt^{\frac12}\delta E(t).
\end{align}
We shall prove that $\delta E(t)=0$ for small enough $t$.

Now applying \eqref{u1}, \eqref{u2} and product laws in Besov spaces, we arrive at
\begin{align*}
\|\delta F_1\|_{L_t^1(\dot{B}_{p,1}^{\frac2p-1})}\lesssim&\|v_2\|_{L_t^1(\dot{B}_{p,1}^{\frac2p+1})}\delta E(t),\nonumber\\
\|\delta F_2\|_{L_t^1(\dot{B}_{p,1}^{\frac2p-1})}\lesssim&\|\nabla P_1\|_{L_t^1(\dot{B}_{p,1}^{\frac2p-1})}\delta E(t).
\end{align*}
Along the same line, one has
\begin{align*}
\|\delta F_3,\delta F_4,\delta F_5,\delta F_6\|_{L_t^1(\dot{B}_{p,1}^{\frac2p-1})}\lesssim\|v_1,v_2\|_{L_t^1(\dot{B}_{p,1}^{\frac2p+1})}\delta E(t).
\end{align*}
Thus, we obtain
\begin{align}\label{udF}
\|\delta F\|_{L_t^1(\dot{B}_{p,1}^{\frac2p-1})}\lesssim\|\Delta v_1,\Delta v_2,\nabla P_1\|_{L_t^1(\dot{B}_{p,1}^{\frac2p-1})}\delta E(t).
\end{align}
Using the same argument, we deduce from \eqref{u1}--\eqref{u4} that
\begin{align}
\|R\|_{L_t^1(\dot{B}_{p,1}^{\frac2p-1})}\lesssim
\big(\|\Delta v_2,\partial_tv_1\|_{L_t^1(\dot{B}_{p,1}^{\frac2p-1})}+\|v_1\|_{L_t^2(\dot{B}_{p,1}^{\frac2p})}\big)\delta E(t).
\end{align}
In order to bound $\delta g$ in $L_t^1(\dot{B}_{p,1}^{\frac2p})$, we apply \eqref{divu} to rewrite $\delta g$ as follow:
$$g=\mathrm{Tr}\big(D\delta v(\mathrm{Id}-A_2)-Dv_1\delta A\big),$$
from which, we easily get that
\begin{align}\label{udg}
\|g\|_{L_t^1(\dot{B}_{p,1}^{\frac2p})}\lesssim\|v_1,v_2\|_{L_t^1(\dot{B}_{p,1}^{\frac2p+1})}\delta E(t).
\end{align}
Plugging \eqref{udF}--\eqref{udg} into \eqref{udE}, we eventually get
\begin{align*}
\delta E(t)\lesssim&\big\{\|\Delta v_1,\Delta v_2,\nabla P_1,\partial_tv_1\|_{L_t^1(\dot{B}_{p,1}^{\frac2p-1})}
+\|v_1\|_{L_t^2(\dot{B}_{p,1}^{\frac2p})}+2^mt^{\frac12}\big\}\delta E(t),
\end{align*}
from which, we get by taking $t$ small enough that $\delta E(t)=0$. The uniqueness on $[0,T]$ can be obtained by a standard argument.

\section{\bf Global well-posedness of \eqref{Model2} with homogeneous viscosity}
In this section, we prove the global well-posedness part of Theorem \ref{Theorem1.1}.
\subsection{Higher regularities of the solutions}
\begin{Proposition}
Let $(a,u,\nabla\Pi)$ be the unique solution of \eqref{Modela} which satisfies \eqref{soln regularity}, \eqref{s3smallness}
and $1+a\geq\kappa>0$ . Then for any $t_0\in(0,T]$, there holds
\begin{align}\label{hr}
&\|u\|_{\widetilde{L}^\infty([t_0,t];\dot{B}_{p,1}^{\frac{2}{p}})}+\|u\|_{L^1([t_0,t];\dot{B}_{p,1}^{\frac{2}{p}+2})}
+\|\nabla\Pi\|_{L^1([t_0,t];\dot{B}_{p,1}^{\frac{2}{p}})}\nonumber\\
\leq&\big(t_0^{-\frac12}+2^{m}\|a\|_{L_t^\infty(\dot{B}_{p,1}^{\frac{2}{p}})}\big)e^{CZ(t)}
\end{align}
with $Z(t)\stackrel{\rm{def}}{=}\|u\|_{L_t^\infty(\dot{B}_{p,1}^{\frac{2}{p}-1})}+\|u\|_{L_t^1(\dot{B}_{p,1}^{\frac{2}{p}+1})}
+\|\nabla\Pi\|_{L_t^1(\dot{B}_{p,1}^{\frac{2}{p}-1})}$.
\end{Proposition}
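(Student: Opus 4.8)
The plan is to upgrade the parabolic estimate of Proposition~\ref{Proposition3.2} by one derivative and combine it with the smoothing of the heat flow at positive times. Since $u_0$ only lies in $\dot{B}_{p,1}^{\frac{2}{p}-1}(\R^2)$, the first step is to manufacture a good ``initial datum'' \emph{inside} the time interval: from \eqref{soln regularity} and the Cauchy--Schwarz inequality $\|u(s)\|_{\dot{B}_{p,1}^{\frac{2}{p}}}^{2}\le\|u(s)\|_{\dot{B}_{p,1}^{\frac{2}{p}-1}}\|u(s)\|_{\dot{B}_{p,1}^{\frac{2}{p}+1}}$ one gets $u\in L^2([t_0/2,t_0];\dot{B}_{p,1}^{\frac{2}{p}})$ with $L^2$-norm $\lesssim Z(t)$, and a mean value argument yields some $t_1\in(t_0/2,t_0)$ with $\|u(t_1)\|_{\dot{B}_{p,1}^{\frac{2}{p}}}\lesssim t_0^{-\frac12}Z(t)$. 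It then suffices to prove \eqref{hr} on the larger interval $[t_1,t]$, taking $u(t_1)$ as the initial datum.

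On $[t_1,t]$ I would run an $L^p$ energy estimate on $\dot{\Delta}_j u$ at the level $\dot{B}_{p,1}^{\frac{2}{p}}$. Writing $1+a=(1+\dot{S}_m a)+(a-\dot{S}_m a)$, with $1+\dot{S}_m a\ge\frac12\kappa$ by \eqref{s3smallness} exactly as in \eqref{Pe2}, the momentum equation of \eqref{Modela} becomes
\begin{align*}
\partial_t\dot{\Delta}_j u-(1+\dot{S}_m a)\Delta\dot{\Delta}_j u=&-\dot{\Delta}_j(u\cdot\nabla u)-\dot{\Delta}_j\big((1+a)\nabla\Pi\big)\\
&+\dot{\Delta}_j\big((a-\dot{S}_m a)\Delta u\big)+[\dot{\Delta}_j,\dot{S}_m a]\Delta u.
\end{align*}
Multiplying the $i$-th component by $|\dot{\Delta}_j u^i|^{p-2}\dot{\Delta}_j u^i$, using Lemma~8 in Appendix~B of \cite{Danchin JDE 2010} to gain the parabolic term $c\kappa 2^{2j}\|\dot{\Delta}_j u\|_{L^p}$, integrating in time from $t_1$, multiplying by $2^{\frac{2j}{p}}$ and summing over $j$, one controls $\|u\|_{\widetilde{L}^\infty([t_1,t];\dot{B}_{p,1}^{\frac{2}{p}})}+c\kappa\|u\|_{L^1([t_1,t];\dot{B}_{p,1}^{\frac{2}{p}+2})}$ by $\|u(t_1)\|_{\dot{B}_{p,1}^{\frac{2}{p}}}$ plus four contributions: the convective term, bounded via the product law $\|u\cdot\nabla u\|_{\dot{B}_{p,1}^{\frac{2}{p}}}\lesssim\|u\|_{\dot{B}_{p,1}^{\frac{2}{p}}}\|u\|_{\dot{B}_{p,1}^{\frac{2}{p}+1}}$ (of Gronwall type in time with the $L^1$-in-time weight $\|u(s)\|_{\dot{B}_{p,1}^{\frac{2}{p}+1}}$, whose integral is part of $Z$); the pressure term $\big(1+\|a\|_{L_t^\infty(\dot{B}_{p,1}^{\frac{2}{p}})}\big)\|\nabla\Pi\|_{L_t^1(\dot{B}_{p,1}^{\frac{2}{p}})}$; the rough-coefficient term $\|a-\dot{S}_m a\|_{L_t^\infty(\dot{B}_{p,1}^{\frac{2}{p}})}\|u\|_{L_t^1(\dot{B}_{p,1}^{\frac{2}{p}+2})}$, absorbed into the left-hand side by \eqref{s3smallness}; and the commutator $\sum_j 2^{\frac{2j}{p}}\|[\dot{\Delta}_j,\dot{S}_m a]\Delta u\|_{L_t^1(L^p)}\lesssim 2^m\|a\|_{L_t^\infty(\dot{B}_{p,1}^{\frac{2}{p}})}Z(t)$, by a one-derivative-higher version of the commutator bound (Lemma~5 of \cite{Danchin AIF 2014}) already invoked in the proof of Proposition~\ref{Proposition3.2}.

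It remains to estimate $\|\nabla\Pi\|_{L_t^1(\dot{B}_{p,1}^{\frac{2}{p}})}$. Taking $\diverg$ of the momentum equation and using $\diverg u=0$ gives $\diverg\big((1+a)\nabla\Pi\big)=\diverg F$ with $F=a\Delta u-u\cdot\nabla u$. Rerunning the computation in the proof of Proposition~\ref{Proposition3.1}\,(ii) with the weight $2^{\frac{2j}{p}}$ in place of $2^{(\frac{2}{p}-1)j}$, using \eqref{Ijp12}--\eqref{Ijp24} to bound $I_j$ and controlling the low-order factor $\|\nabla\Pi\|_{L^2}$ (resp. $\|\nabla\Pi\|_{\dot{B}_{p,2}^{\frac{2}{p}-1}}$) by $Z(t)$ through the embeddings $\dot{B}_{p,1}^{\frac{2}{p}-1}\hookrightarrow L^2$ (resp. $\hookrightarrow\dot{B}_{p,2}^{\frac{2}{p}-1}$), yields $\|\nabla\Pi\|_{\dot{B}_{p,1}^{\frac{2}{p}}}\lesssim\|\mathcal{Q}F\|_{\dot{B}_{p,1}^{\frac{2}{p}}}+\|a\|_{\dot{B}_{p,1}^{\frac{2}{p}}}Z(t)$. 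Here $\mathcal{Q}(a\Delta u)=-\nabla(-\Delta)^{-1}(\nabla a\cdot\Delta u)$ again because $\diverg u=0$, so splitting $a=\dot{S}_m a+(a-\dot{S}_m a)$ the $\dot{S}_m a$ part is $\lesssim 2^m\|a\|_{\dot{B}_{p,1}^{\frac{2}{p}}}\|u\|_{\dot{B}_{p,1}^{\frac{2}{p}+1}}$ and the remainder $\lesssim\|a-\dot{S}_m a\|_{\dot{B}_{p,1}^{\frac{2}{p}}}\|u\|_{\dot{B}_{p,1}^{\frac{2}{p}+2}}$ (absorbable), while $\|\mathcal{Q}(u\cdot\nabla u)\|_{\dot{B}_{p,1}^{\frac{2}{p}}}\lesssim\|u\|_{\dot{B}_{p,1}^{\frac{2}{p}}}\|u\|_{\dot{B}_{p,1}^{\frac{2}{p}+1}}$ is again of Gronwall type. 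Feeding this back into the energy estimate, absorbing all top-order terms through \eqref{s3smallness}, and then applying Gronwall's inequality with the weight $\|u(s)\|_{\dot{B}_{p,1}^{\frac{2}{p}+1}}$, one gets on $[t_1,t]$ a bound $\lesssim\big(\|u(t_1)\|_{\dot{B}_{p,1}^{\frac{2}{p}}}+2^m\|a\|_{L_t^\infty(\dot{B}_{p,1}^{\frac{2}{p}})}\big)e^{CZ(t)}$; since $[t_0,t]\subset[t_1,t]$ and $\|u(t_1)\|_{\dot{B}_{p,1}^{\frac{2}{p}}}\lesssim t_0^{-\frac12}Z(t)\lesssim t_0^{-\frac12}e^{CZ(t)}$, \eqref{hr} follows.

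The main obstacle is the treatment of the genuinely top-order terms $(a-\dot{S}_m a)\Delta u$ in the velocity equation and $\mathcal{Q}(a\Delta u)$ in the pressure equation: the endpoint product law does not permit $\|a\|_{\dot{B}_{p,1}^{\frac{2}{p}}}\|u\|_{\dot{B}_{p,1}^{\frac{2}{p}+2}}$, so one must isolate a high-frequency part that is \emph{small} by \eqref{s3smallness} and a low-frequency part where the spectral support of $\dot{S}_m a$ trades one derivative for the harmless factor $2^m$; the algebraic identity $\mathcal{Q}(a\Delta u)=-\nabla(-\Delta)^{-1}(\nabla a\cdot\Delta u)$, which holds only because $\diverg u=0$, is precisely what makes the two-dimensional borderline go through, exactly as the elliptic estimate of Proposition~\ref{Proposition3.1} does at the base level.
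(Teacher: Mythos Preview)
Your overall architecture---find a good initial time by interpolation, run the $\dot B_{p,1}^{2/p}$ energy estimate on $\dot\Delta_j u$ with the $\dot S_m$-splitting to absorb top-order terms, then close by Gronwall---matches the paper, and your mean-value argument for producing $t_1$ with $\|u(t_1)\|_{\dot B_{p,1}^{2/p}}\lesssim t_0^{-1/2}Z(t)$ is a perfectly acceptable substitute for the paper's averaging over $\tau\in(0,t_0)$. The velocity estimate you sketch is correct.

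The gap is in the pressure step. You propose to keep the full coefficient $1+a$ in the elliptic equation $\diverg\big((1+a)\nabla\Pi\big)=\diverg F$ and ``rerun Proposition~\ref{Proposition3.1}\,(ii) with the weight $2^{2j/p}$'', invoking \eqref{Ijp12}--\eqref{Ijp24}. That does not close: those bounds give, after dividing \eqref{Ee1} by $\|\dot\Delta_j\Pi\|_{L^p}^{p-1}$,
\[
\|\dot\Delta_j\nabla\Pi\|_{L^p}\lesssim\|\dot\Delta_j\mathcal{Q}F\|_{L^p}+d_j\,2^{j(1-\frac2p)}\|a\|_{\dot B_{p,1}^{2/p}}\|\nabla\Pi\|_*,
\]
with $*=L^2$ or $\dot B_{p,2}^{2/p-1}$. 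Multiplying by $2^{2j/p}$ leaves a remainder $d_j\,2^{j}\|a\|\,\|\nabla\Pi\|_*$, and $\sum_j d_j 2^{j}$ diverges. The commutator estimates of Lemma~\ref{Lemma3.1} are tuned to the critical level $\dot B_{p,1}^{2/p-1}$; upgrading them by one derivative would put $\|\nabla\Pi\|_{\dot B_{p,?}^{2/p}}$ on the right-hand side and create a circularity you cannot absorb, since $\|a\|_{\dot B_{p,1}^{2/p}}$ is not small.

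The fix---which is what the paper does---is to apply the $\dot S_m$-splitting \emph{also to the coefficient of the elliptic operator}, not only to the source: rewrite the momentum equation as
$\partial_tu-(1+\dot S_ma)\Delta u+(1+\dot S_ma)\nabla\Pi=-u\!\cdot\!\nabla u+(a-\dot S_ma)(\Delta u-\nabla\Pi)$
and take $\dot\Delta_j\diverg$, so that the commutator one must control is $[\dot\Delta_j,\dot S_ma]\nabla\Pi$. Because $\dot S_ma$ has spectrum in a ball of radius $\sim2^m$, one gets
\[
\sum_j 2^{(\frac2p-1)j}\big\|\diverg\big([\dot\Delta_j,\dot S_ma]\nabla\Pi\big)\big\|_{L^p}
\lesssim 2^{m}\|a\|_{\dot B_{p,1}^{2/p}}\|\nabla\Pi\|_{\dot B_{p,1}^{2/p-1}},
\]
which is $\lesssim 2^{m}\|a\|Z(t)$; the remaining piece $(a-\dot S_ma)(\Delta u-\nabla\Pi)$ is absorbable by \eqref{s3smallness}. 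Likewise, in the velocity equation the pressure enters (after applying $\mathcal P$) only through $\Pi\nabla\dot S_ma$ and the $(a-\dot S_ma)\nabla\Pi$ piece, both harmless. With this correction your scheme goes through and yields exactly \eqref{hr}.
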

\begin{proof} The proof of this proposition is similar to the proof of Proposition \ref{Proposition3.2}. For completeness, we outline its proof here.
We first rewrite the momentum equation of \eqref{Modela} as
\begin{align}\label{hr1}
\partial_t u-(1+\dot{S}_ma)\Delta u+(1+\dot{S}_ma)\nabla\Pi=-u\cdot\nabla u+\dot{E}_m
\end{align}
with $\dot{E}_m\stackrel{\rm{def}}{=}(a-\dot{S}_ma)(\Delta u-\nabla\Pi)$.
Thanks to $\diverg u=0$, applying $\dot{\Delta}_j\mathcal{P}$ to \eqref{hr1} leads to
\begin{align*}
&\partial_t\dot{\Delta}_j u-\diverg\big((1+\dot{S}_ma)\dot{\Delta}_j\nabla u\big)\\
=&\dot{\Delta}_j\mathcal{P}\big(-u\cdot\nabla u+\dot{E}_m+\Pi\nabla \dot{S}_ma\big)-\dot{\Delta}_j\big(\nabla\dot{S}_m a\cdot\nabla u\big)\\
&-\dot{\Delta}_j\mathcal{Q}\big(\dot{S}_ma\Delta u\big)+\diverg([\dot{\Delta}_j,\dot{S}_ma]\nabla u),
\end{align*}
from which, we infer for $0<\tau<t_0\leq t\leq T$ that
\begin{align}\label{hru}
&\|u\|_{\widetilde{L}^\infty([\tau,t];\dot{B}_{p,1}^{\frac{2}{p}})}+\|u\|_{L^1([\tau,t];\dot{B}_{p,1}^{\frac{2}{p}+2})}\nonumber\\
\lesssim&\|u(\tau)\|_{\dot{B}_{p,1}^{\frac{2}{p}}}+\|u\cdot\nabla u\|_{L^1([\tau,t];\dot{B}_{p,1}^{\frac{2}{p}})}
+\|\dot{E}_m\|_{L^1([\tau,t];\dot{B}_{p,1}^{\frac{2}{p}})}+\|\Pi\nabla \dot{S}_ma\|_{L^1([\tau,t];\dot{B}_{p,1}^{\frac{2}{p}})}\nonumber\\
&+\|\nabla \dot{S}_m a\cdot\nabla u\|_{L^1([\tau,t];\dot{B}_{p,1}^{\frac{2}{p}})}
+\|\nabla\dot{S}_ma\cdot\Delta u\|_{L^1([\tau,t];\dot{B}_{p,1}^{\frac{2}{p}-1})}\nonumber\\
&+\sum_{j\in\mathbb{Z}}2^{\frac{2j}{p}}\|\diverg([\dot{\Delta}_j,\dot{S}_ma]\nabla u)\|_{L^1([\tau,t];L^p)},
\end{align}
where we used $\mathcal{Q}(\dot{S}_m a\Delta u)=-\nabla(-\Delta)^{-1}(\nabla\dot{S}_ma\cdot\Delta u)$.
While applying $\dot{\Delta}_j\diverg$ to \eqref{hr1} gives
\begin{align*}
\diverg\big((1+\dot{S}_ma)\dot{\Delta}_j\nabla\Pi\big)=\dot{\Delta}_j\diverg\big(-u\cdot\nabla u+\dot{E}_m+\dot{S}_ma\Delta u\big)
-\diverg\big([\dot{\Delta}_j,\dot{S}_m a]\nabla\Pi\big),
\end{align*}
which implies
\begin{align}\label{hrPi}
\|\nabla\Pi\|_{L^1([\tau,t];\dot{B}_{p,1}^{\frac{2}{p}})}\lesssim&
\|u\cdot\nabla u\|_{L^1([\tau,t];\dot{B}_{p,1}^{\frac{2}{p}})}+\|\dot{E}_m\|_{L^1([\tau,t];\dot{B}_{p,1}^{\frac{2}{p}})}
+\|\nabla\dot{S}_ma\cdot\Delta u\|_{L^1([\tau,t];\dot{B}_{p,1}^{\frac{2}{p}-1})}\nonumber\\
&+\sum_{j\in\mathbb{Z}}2^{(\frac{2}{p}-1)j}\|\diverg\big([\dot{\Delta}_j,\dot{S}_m a]\nabla\Pi\big)\|_{L^1([\tau,t];L^p)}.
\end{align}
Summing up \eqref{hru} and \eqref{hrPi} and then applying product laws,
commutator estimates (see \cite[Lemma 5]{Danchin AIF 2014}) and \eqref{s3smallness}, we arrive at
\begin{align*}
&\|u\|_{\widetilde{L}^\infty([\tau,t];\dot{B}_{p,1}^{\frac{2}{p}})}+\|u\|_{L^1([\tau,t];\dot{B}_{p,1}^{\frac{2}{p}+2})}
+\|\nabla\Pi\|_{L^1([\tau,t];\dot{B}_{p,1}^{\frac{2}{p}})}\nonumber\\
\lesssim&\|u(\tau)\|_{\dot{B}_{p,1}^{\frac{2}{p}}}+2^{m}\|a\|_{L_t^\infty(\dot{B}_{p,1}^{\frac{2}{p}})}
\|\Delta u,\nabla\Pi\|_{L_t^1(\dot{B}_{p,1}^{\frac{2}{p}-1})}+\int_\tau^t\|u\|_{\dot{B}_{p,1}^{\frac{2}{p}}}\|u\|_{\dot{B}_{p,1}^{\frac{2}{p}+1}}dt',
\end{align*}
from which, we get by using Gronwall's inequality that
\begin{align*}
&\|u\|_{\widetilde{L}^\infty([\tau,t];\dot{B}_{p,1}^{\frac{2}{p}})}+\|u\|_{L^1([\tau,t];\dot{B}_{p,1}^{\frac{2}{p}+2})}
+\|\nabla\Pi\|_{L^1([\tau,t];\dot{B}_{p,1}^{\frac{2}{p}})}\nonumber\\
\leq&C\big(\|u(\tau)\|_{\dot{B}_{p,1}^{\frac{2}{p}}}+2^{m}\|a\|_{L_t^\infty(\dot{B}_{p,1}^{\frac{2}{p}})}Z(t)\big)e^{CZ(t)}.
\end{align*}
Integrating the above inequality for $\tau$ over $(0,t_0)$, we conclude the proof of \eqref{hr}.
\end{proof}

\subsection{Energy estimates in the $L^2$ framework}
Let $T^*$ be the maximal existence time of the unique local solution $(a,u,\nabla\Pi)$ obtained in Section \ref{Section4}.
Thanks to \eqref{soln regularity} and \eqref{hr}, we infer that for any $t\in(0,T^*)$, there exists a time $t_1\in(0,t)$ such that
$u(t_1)\in\dot{B}_{p,1}^{\frac2p-1}(\R^2)\cap\dot{B}_{p,1}^{\frac2p+2}(\R^2)$. As in \eqref{Modelbar}, denote by $u_F(t)\stackrel{\mathrm{def}}{=}e^{(t-t_1)\Delta}u(t_1)$.
Then thanks to Lemma \ref{Heatflow}, we have for $s\in[\frac2p-1,\frac2p+2]$ that
\begin{align}\label{guF}
\|u_F\|_{\widetilde{L}^\infty([t_1,\infty);\dot{B}_{p,1}^{s})}+\|u_F\|_{L^1([t_1,\infty);\dot{B}_{p,1}^{s+2})}
+\|u_F\|_{L^2([t_1,\infty);\dot{B}_{p,1}^{s+1})}\lesssim\|u(t_1)\|_{\dot{B}_{p,1}^{s}}.
\end{align}

Note that for $p\in(2,4)$, the free solution $u_F$ is not of finite energy. Fortunately, the convection term $u_F\cdot\nabla u_F$ is of finite energy.
Indeed, for $u,v\in\dot{B}_{p,1}^{\frac2p-1}(\R^2)\cap\dot{B}_{p,1}^{\frac2p+1}(\R^2)$ with $p\in(2,4)$, we have
\begin{align}\label{gconvection}
\|u\cdot\nabla v\|_{L^2}\leq&\|u\|_{L^4}\|\nabla v\|_{L^4}
\leq\|u\|_{\dot{B}_{p,1}^{\frac2p-1}}^\frac34\|u\|_{\dot{B}_{p,1}^{\frac2p+1}}^\frac14
\|v\|_{\dot{B}_{p,1}^{\frac2p-1}}^\frac14\|v\|_{\dot{B}_{p,1}^{\frac2p+1}}^\frac34\nonumber\\
\leq&\|u\|_{\dot{B}_{p,1}^{\frac2p-1}}\|v\|_{\dot{B}_{p,1}^{\frac2p+1}}
+\|u\|_{\dot{B}_{p,1}^{\frac2p+1}}\|v\|_{\dot{B}_{p,1}^{\frac2p-1}}.
\end{align}

Owing to \eqref{Modelbar}, we next present the $L^2$ energy estimates for $\bar{u}$ in the case when $p\in(2,4)$. Similar estimates for $p\in(1,2]$ will be mentioned after Lemma \ref{Lemma5.3}.

\begin{Lemma}\label{Lemma5.1}($L^2$ estimate of $\bar{u}$).
Under the assumptions of Theorem \ref{Theorem1.1}, there exists a  time independent constant $C$ such that
\begin{align}\label{gL2}
\|\bar{u}\|_{L^\infty([t_1,T^*);L^2)}+\|\nabla \bar{u}\|_{L^2([t_1,T^*);L^2)}\leq C.
\end{align}
\end{Lemma}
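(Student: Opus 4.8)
The plan is to derive \eqref{gL2} from a direct $L^2(\R^2)$ energy estimate on the equation for $\bar u$ in \eqref{Modelbar}; the manipulations below are legitimate once one runs them first on the smooth approximate solutions of Section~\ref{Section4} and then passes to the limit. Taking the scalar product of the momentum equation of \eqref{Modelbar} with $\bar u$, the pressure term vanishes since $\diverg\bar u=0$, the viscous term gives $\|\nabla\bar u\|_{L^2}^2$, and, because $\diverg(u_F+\bar u)=0$, combining the convective term with the density equation $\partial_t\rho+\diverg(\rho(u_F+\bar u))=0$ produces
\begin{align}\label{gL2id}
\frac12\frac{d}{dt}\|\sqrt{\rho}\,\bar u\|_{L^2}^2+\|\nabla\bar u\|_{L^2}^2
=&\int_{\R^2}(1-\rho)\Delta u_F\cdot\bar u\,dx
-\int_{\R^2}\rho\,u_F\cdot\nabla u_F\cdot\bar u\,dx\nonumber\\
&-\int_{\R^2}\rho\,\bar u\cdot\nabla u_F\cdot\bar u\,dx.
\end{align}
Since $a\in C([0,T^*);\dot B_{p,1}^{2/p})\hookrightarrow L^\infty$ and $1+a\ge\kappa$, the density $\rho=(1+a)^{-1}$ is bounded above and below by positive constants, so $\|\sqrt\rho\,\bar u\|_{L^2}\simeq\|\bar u\|_{L^2}$; it thus suffices to bound the three integrals on the right of \eqref{gL2id} by a function of $t$ that is integrable over $[t_1,T^*)$ with a norm independent of $T^*$, times $1+\|\bar u\|_{L^2}^2$.

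The last two integrals are handled by the heat-flow bounds \eqref{guF} and the bilinear estimate \eqref{gconvection}. For the third one, $\big|\int\rho\,\bar u\cdot\nabla u_F\cdot\bar u\big|\lesssim\|\nabla u_F\|_{L^\infty}\|\bar u\|_{L^2}^2$ and $\|\nabla u_F\|_{L^\infty}\lesssim\|u_F\|_{\dot B_{p,1}^{2/p+1}}\in L^1([t_1,\infty))$ by \eqref{guF}. For the second, \eqref{gconvection} gives $\|u_F\cdot\nabla u_F\|_{L^2}\lesssim\|u_F\|_{\dot B_{p,1}^{2/p-1}}\|u_F\|_{\dot B_{p,1}^{2/p+1}}\in L^1([t_1,\infty))$, again by \eqref{guF}, so $\big|\int\rho\,u_F\cdot\nabla u_F\cdot\bar u\big|\lesssim\|u_F\cdot\nabla u_F\|_{L^2}\big(1+\|\bar u\|_{L^2}^2\big)$.

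The real obstacle is the first integral $J_1:=\int(1-\rho)\Delta u_F\cdot\bar u$, because for $p\in(2,4)$ one does not have $\Delta u_F\in L^2_x$: the critical bound \eqref{guF} controls $\Delta u_F$ only in $L^\infty$-type norms. The point is that here the \emph{inhomogeneous} hypothesis $a_0\in B_{p,1}^{2/p}$ is available, so that $a_0\in L^p$; since $a$ is transported by the volume-preserving flow of $u$ and $\rho$ is bounded from below, $\|1-\rho(t)\|_{L^p}\le\kappa^{-1}\|a(t)\|_{L^p}=\kappa^{-1}\|a_0\|_{L^p}$ for every $t$. On the other side, using \eqref{guF} with $s=\tfrac2p-1$ and $s=\tfrac2p$ one gets $\Delta u_F\in L^1([t_1,\infty);\dot B_{p,1}^{2/p-1}\cap\dot B_{p,1}^{2/p})$; since $\dot B_{p,1}^{2/p-1}\cap\dot B_{p,1}^{2/p}\hookrightarrow L^p$ (low frequencies are controlled by the first space, high ones by the second) and $\dot B_{p,1}^{2/p}\hookrightarrow L^\infty$, and since $p<4$, it follows that $\Delta u_F\in L^1([t_1,\infty);L^4)$. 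Hence, by Hölder with exponents $(4,4,2)$,
\[
|J_1|\le\|1-\rho\|_{L^4}\,\|\Delta u_F\|_{L^4}\,\|\bar u\|_{L^2}\lesssim\|\Delta u_F\|_{L^4}\big(1+\|\bar u\|_{L^2}^2\big),
\]
with $\|\Delta u_F\|_{L^4}\in L^1([t_1,\infty))$ and a norm depending only on $\kappa$ and $\|u(t_1)\|_{\dot B_{p,1}^{2/p-1}\cap\dot B_{p,1}^{2/p+2}}$.

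Collecting these bounds, \eqref{gL2id} becomes $\frac{d}{dt}\|\sqrt\rho\,\bar u\|_{L^2}^2+\|\nabla\bar u\|_{L^2}^2\lesssim\psi(t)\big(1+\|\bar u\|_{L^2}^2\big)$ with $\psi\lesssim\|\Delta u_F\|_{L^4}+\|u_F\cdot\nabla u_F\|_{L^2}+\|\nabla u_F\|_{L^\infty}$, and $\psi\in L^1([t_1,\infty))$ with $\|\psi\|_{L^1}$ controlled by $\kappa$, $\|a_0\|_{B_{p,1}^{2/p}}$ and $\|u(t_1)\|_{\dot B_{p,1}^{2/p-1}\cap\dot B_{p,1}^{2/p+2}}$ only. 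Since $\bar u|_{t=t_1}=0$, Gronwall's lemma yields $\|\bar u\|_{L^\infty([t_1,T^*);L^2)}^2\le C$, and then integrating the differential inequality once more gives $\|\nabla\bar u\|_{L^2([t_1,T^*);L^2)}^2\le C$, with $C$ independent of $T^*$; this is \eqref{gL2}. I expect the crux of the argument to be precisely the correct functional placement of the term $(1-\rho)\Delta u_F$: once one exploits the extra $L^p$-integrability of $a_0$ together with the $L^1_t$ heat smoothing of $\Delta u_F$, the remaining estimates are a routine Gronwall argument.
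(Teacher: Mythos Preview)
Your proof is correct and follows the same overall strategy as the paper: take the $L^2$ inner product of the $\bar u$ equation in \eqref{Modelbar} with $\bar u$, use the density bounds coming from $1+a\ge\kappa$, control the three pieces of $G$, and close by Gronwall with $\bar u|_{t=t_1}=0$. The handling of the last two terms (via \eqref{gconvection} and $\|\nabla u_F\|_{L^\infty}\lesssim\|u_F\|_{\dot B_{p,1}^{2/p+1}}$) matches the paper exactly.

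The only genuine difference is in the treatment of the term $J_1=\int(1-\rho)\Delta u_F\cdot\bar u\,dx$. The paper does not pass through $L^4\times L^4\times L^2$; instead it bounds the full forcing in $L^2$ directly via the H\"older pairing $L^{\frac{2p}{p-2}}\times L^p\to L^2$:
\[
\|(1-\rho)\Delta u_F\|_{L^2}\lesssim\|a\|_{L^{\frac{2p}{p-2}}}\|\Delta u_F\|_{L^p}\lesssim\|u_F\|_{\dot B_{p,1}^{2}},
\]
using that $a_0\in B_{p,1}^{2/p}\hookrightarrow L^p\cap L^\infty\hookrightarrow L^{2p/(p-2)}$ (here $p<4$ is exactly what guarantees $\tfrac{2p}{p-2}>p$) and that $\|a(t)\|_{L^{2p/(p-2)}}$ is conserved by the divergence-free transport. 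This avoids your interpolation step $\dot B_{p,1}^{2/p-1}\cap\dot B_{p,1}^{2/p}\hookrightarrow L^p\cap L^\infty\hookrightarrow L^4$ for $\Delta u_F$, and places $\|u_F\|_{\dot B_{p,1}^{2}}$ directly in $L^1([t_1,\infty))$ by \eqref{guF} with $s=0$. Both routes exploit the same essential fact you identified---the extra low-frequency integrability of $a$ supplied by the \emph{inhomogeneous} assumption $a_0\in B_{p,1}^{2/p}$---so the difference is one of bookkeeping rather than of idea; the paper's pairing is simply the minimal one.
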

\begin{proof} Firstly thanks $1+a_0\geq\kappa$, we deduce from the transport equation of \eqref{Modelbar} that
\begin{align}\label{grho}
\frac{1}{1+\|a_0\|_{L^\infty}}\leq\rho(t,x)\leq\frac{1}{\kappa}.
\end{align}
Next, taking the $L^2$ inner product of the $\bar{u}$ equation of \eqref{Modelbar} with $\bar{u}$ leads to
\begin{align}\label{g1}
\frac12\frac{d}{dt}\|\sqrt{\rho}\bar{u}\|_{L^2}^2+\|\nabla \bar{u}\|_{L^2}^2=\int_{\R^2}\bar{u}\cdot Gdx
\lesssim\|\sqrt{\rho}\bar{u}\|_{L^2}\|G\|_{L^2}.
\end{align}
Yet thanks to $1-\rho=\rho a$ and \eqref{grho}, we get by applying H$\rm{\ddot{o}}$lder inequality and \eqref{gconvection} that
\begin{align}\label{gG}
\|G\|_{L^2}
\lesssim&\|a\|_{L^{\frac{2p}{p-2}}}\|\Delta u_F\|_{L^p}+\|u_F\cdot\nabla u_F\|_{L^2}+\|\sqrt{\rho}\bar{u}\|_{L^2}\|\nabla u_F\|_{L^\infty}\nonumber\\
\lesssim&\|u_F\|_{\dot{B}_{p,1}^{2}}+\|u_F\|_{\dot{B}_{p,1}^{\frac2p-1}}\|u_F\|_{\dot{B}_{p,1}^{\frac2p+1}}
+\|\sqrt{\rho}\bar{u}\|_{L^2}\|u_F\|_{\dot{B}_{p,1}^{\frac2p+1}}.
\end{align}
Thanks to \eqref{guF}, plugging \eqref{gG} into \eqref{g1} and applying Gronwall's inequality gives rise to \eqref{gL2}.
This completes the proof of the lemma.
\end{proof}

\begin{Lemma}\label{Lemma5.2}($H^1$ estimate of $\bar{u}$).
Under the assumptions of Theorem \ref{Theorem1.1}, there exists a time independent constant $C$ such that
\begin{align}\label{gH1}
\|\nabla \bar{u}\|_{L^\infty([t_1,T^*);L^2)}+\|\bar{u}_t,\Delta \bar{u},\nabla\Pi\|_{L^2([t_1,T^*);L^2)}\leq C.
\end{align}
\end{Lemma}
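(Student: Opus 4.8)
The plan is to run the standard parabolic energy estimate: take the $L^2$ inner product of the $\bar u$-equation in \eqref{Modelbar} with $\bar u_t$ (or, equivalently, with $-\Delta\bar u$ after using the Stokes structure), so that the leading term produces $\tfrac12\frac{d}{dt}\|\nabla\bar u\|_{L^2}^2$ together with a coercive $\|\sqrt\rho\,\bar u_t\|_{L^2}^2$ on the left. The pressure term $\int\nabla\Pi\cdot\bar u_t\,dx$ is handled by writing $\bar u_t=\partial_t(\bar u)$ and using $\diverg\bar u=0$ to integrate by parts in time after taking a divergence of the momentum equation: from $\diverg\big(\rho(\partial_t\bar u+(u_F+\bar u)\cdot\nabla\bar u)\big)-\Delta\diverg\bar u+\Delta\Pi=\diverg G$ one gets an elliptic equation for $\Pi$ whose right-hand side is controlled by the other terms. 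Concretely I would first establish an auxiliary bound $\|\nabla\Pi\|_{L^2}\lesssim \|\sqrt\rho\,\bar u_t\|_{L^2}+\|(u_F+\bar u)\cdot\nabla\bar u\|_{L^2}+\|G\|_{L^2}$ by testing the momentum equation against $\nabla(-\Delta)^{-1}\diverg(\,\cdot\,)$ and using \eqref{grho}; this lets me absorb the pressure contribution into the coercive terms.

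The core of the argument is the bookkeeping of the right-hand side. After the energy identity one is left to control $\int \rho\,(u_F+\bar u)\cdot\nabla\bar u\cdot(\Delta\bar u)\,dx$ (or $\cdot\,\bar u_t$) and $\int G\cdot\Delta\bar u\,dx$. For the convection term I would split $u_F+\bar u$: the $\bar u$ part gives $\|\bar u\|_{L^4}\|\nabla\bar u\|_{L^4}\|\Delta\bar u\|_{L^2}\lesssim \|\bar u\|_{L^2}^{1/2}\|\nabla\bar u\|_{L^2}\|\Delta\bar u\|_{L^2}^{3/2}$ by Ladyzhenskaya's inequality, which after Young's inequality is absorbed by $\varepsilon\|\Delta\bar u\|_{L^2}^2$ at the cost of $C\|\bar u\|_{L^2}^2\|\nabla\bar u\|_{L^2}^4$; the $u_F$ part is estimated using \eqref{gconvection} and \eqref{guF}, which guarantee $u_F\cdot\nabla\bar u$ and $\bar u\cdot\nabla u_F$ lie in $L^2_t L^2_x$ with norms controlled by $\|\nabla\bar u\|_{L^2}$ and by the (now controlled, thanks to \eqref{guF}, with the higher regularity $u(t_1)\in\dot B_{p,1}^{2/p+2}$) norms of $u_F$. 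The forcing term $G=(1-\rho)\Delta u_F-\rho u_F\cdot\nabla u_F-\rho\bar u\cdot\nabla u_F$ is estimated exactly as in \eqref{gG}, giving an $L^2_tL^2_x$ bound in terms of $\|u_F\|_{\dot B_{p,1}^2}$, $\|u_F\|_{\dot B_{p,1}^{2/p-1}}\|u_F\|_{\dot B_{p,1}^{2/p+1}}$ and $\|\sqrt\rho\,\bar u\|_{L^2}\|u_F\|_{\dot B_{p,1}^{2/p+1}}$, all finite by Lemma \ref{Lemma5.1} and \eqref{guF}. Collecting everything yields a differential inequality of the form
\begin{align*}
\frac{d}{dt}\|\nabla\bar u\|_{L^2}^2+c\|\bar u_t,\Delta\bar u,\nabla\Pi\|_{L^2}^2
\lesssim \big(\|\nabla\bar u\|_{L^2}^2+1\big)\big(\|\bar u\|_{L^2}^2\|\nabla\bar u\|_{L^2}^2+h(t)\big),
\end{align*}
with $h\in L^1([t_1,T^*))$ a time-independently-bounded function built from the $u_F$-norms.

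To conclude I would invoke Lemma \ref{Lemma5.1}, which provides $\|\bar u\|_{L^\infty_t L^2}\le C$ and crucially $\|\nabla\bar u\|_{L^2_tL^2}\le C$, so that the ``bad'' coefficient $\|\bar u\|_{L^2}^2\|\nabla\bar u\|_{L^2}^2$ is integrable in time. Gronwall's inequality then gives the time-independent bound on $\|\nabla\bar u\|_{L^\infty_tL^2}$, and integrating the differential inequality once more yields $\|\bar u_t,\Delta\bar u,\nabla\Pi\|_{L^2_tL^2}\le C$. The delicate point — the main obstacle — is that $\bar u$ itself need not start at $t=t_1$ with finite $H^1$ norm a priori unless one knows $\bar u(t_1)=0$ (which is exactly the choice made in \eqref{Modelbar}, $\bar u|_{t=t_1}=0$), so the initial datum for the energy estimate vanishes; beyond that, the real care is in making sure every occurrence of $u_F$ is paired with a norm controlled by \eqref{guF} (which is where the higher regularity $u(t_1)\in\dot B_{p,1}^{2/p+2}$ secured by the previous proposition is used), and in checking that the cubic/quartic terms in $\bar u$ are genuinely absorbable by the parabolic gain $\|\Delta\bar u\|_{L^2}^2$ via Ladyzhenskaya and Young — i.e. that no term of order higher than quadratic in $\|\nabla\bar u\|_{L^2}$ survives without an $L^1_t$ coefficient.
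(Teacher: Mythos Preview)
Your approach is essentially the same as the paper's: test the $\bar u$-equation against $\bar u_t$, estimate the convection via Ladyzhenskaya/Gagliardo--Nirenberg exactly as you describe, bound $G$ via \eqref{gG}, and close by Gronwall using Lemma~\ref{Lemma5.1}. The only simplification the paper makes is in the pressure handling: $\int\nabla\Pi\cdot\bar u_t\,dx=0$ trivially (integration by parts in \emph{space} and $\diverg\bar u_t=0$, no time integration needed), and then the bounds on $\Delta\bar u$ and $\nabla\Pi$ come in one line from the orthogonality identity $\|\Delta\bar u\|_{L^2}^2+\|\nabla\Pi\|_{L^2}^2=\|\Delta\bar u-\nabla\Pi\|_{L^2}^2\lesssim\|\sqrt\rho\,\bar u_t\|_{L^2}^2+\|\rho(u_F+\bar u)\cdot\nabla\bar u\|_{L^2}^2+\|G\|_{L^2}^2$, avoiding any separate elliptic testing.
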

\begin{proof} Taking the $L^2$ inner product of the $\bar{u}$ equation of \eqref{Modelbar} with $\bar{u}_t$ gives
\begin{align}\label{g2}
\frac{d}{dt}\|\nabla \bar{u}\|_{L^2}^2+\|\sqrt{\rho}\bar{u}_t\|_{L^2}^2
\lesssim\|G\|_{L^2}^2+\|\rho(u_F+\bar{u})\cdot\nabla \bar{u}\|_{L^2}^2.
\end{align}
On the other hand,  we readily deduce from \eqref{Modelbar} that
\begin{align}\label{guPi}
\|\Delta \bar{u}\|_{L^2}^2+\|\nabla\Pi\|_{L^2}^2=&\|\Delta \bar{u}-\nabla\Pi\|_{L^2}^2
\lesssim\|\sqrt{\rho}\bar{u}_t\|_{L^2}^2+\|\rho(u_F+\bar{u})\cdot\nabla \bar{u}\|_{L^2}^2+\|G\|_{L^2}^2,
\end{align}
which along with \eqref{g2} implies
\begin{align}\label{g3}
\frac{d}{dt}\|\nabla \bar{u}\|_{L^2}^2+C^{-1}\|\sqrt{\rho}\bar{u}_t,\Delta \bar{u},\nabla\Pi\|_{L^2}^2
\lesssim\|G\|_{L^2}^2+\|u_F\|_{L^\infty}^2\|\nabla \bar{u}\|_{L^2}^2+\|\bar{u}\cdot\nabla \bar{u}\|_{L^2}^2.
\end{align}
While taking advantage of Gagliardo-Nirenberg inequality, we obtain
\begin{align}\label{g4}
\|\bar{u}\cdot\nabla \bar{u}\|_{L^2}^2\lesssim\|\bar{u}\|_{L^4}^2\|\nabla \bar{u}\|_{L^4}^2\lesssim
\|\bar{u}\|_{L^2}\|\nabla \bar{u}\|_{L^2}^2\|\Delta \bar{u}\|_{L^2}.
\end{align}
Plugging \eqref{gG} and \eqref{g4} into \eqref{g3} and using Young's inequality leads to
\begin{align*}
&\frac{d}{dt}\|\nabla \bar{u}\|_{L^2}^2+C^{-1}\|\sqrt{\rho}\bar{u}_t,\Delta \bar{u},\nabla\Pi\|_{L^2}^2\nonumber\\
\lesssim&\big(\|u_F\|_{\dot{B}_{p,1}^{\frac2p}}^2+\|\bar{u}\|_{L^2}^2\|\nabla \bar{u}\|_{L^2}^2\big)\|\nabla \bar{u}\|_{L^2}^2
+\big(\|u_F\|_{\dot{B}_{p,1}^{\frac2p-1}}^2+\|\bar{u}\|_{L^2}^2\big)\|u_F\|_{\dot{B}_{p,1}^{\frac2p+1}}^2+\|u_F\|_{\dot{B}_{p,1}^{2}}^2,
\end{align*}
from which, \eqref{guF} and \eqref{gL2}, we conclude the proof of \eqref{gH1}.
\end{proof}

\begin{Lemma}\label{Lemma5.3}($H^2$ estimate of $\bar{u}$).
Under the assumptions of Theorem \ref{Theorem1.1}, there exists a time independent constant $C$ such that for any $q\in[2,\infty)$
\begin{align}\label{gH2}
\|\bar{u}_t,\Delta \bar{u},\nabla\Pi\|_{L^\infty([t_1,T^*);L^2)}+\|\nabla \bar{u}_t\|_{L^2([t_1,T^*);L^2)}
+\|\Delta\bar{u},\nabla\Pi\|_{L^2([t_1,T^*);L^q)}\leq C.
\end{align}
\end{Lemma}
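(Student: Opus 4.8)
The plan is to carry out one further energy estimate, this time on the time derivative $\bar{u}_t$, and then to convert the regularity so obtained into the claimed bounds on $\Delta\bar{u}$ and $\nabla\Pi$ by viewing the momentum equation of \eqref{Modelbar} as a stationary Stokes system. The only genuinely two-dimensional tools needed are the interpolation inequalities $\|f\|_{L^4}^2\lesssim\|f\|_{L^2}\|\nabla f\|_{L^2}$ and $\|f\|_{L^q}\lesssim\|f\|_{L^2}^{\frac{2}{q}}\|\nabla f\|_{L^2}^{1-\frac{2}{q}}$; everything else will be reduced to the bounds \eqref{gL2}, \eqref{gH1} on $\bar{u}$, to the heat-flow bounds \eqref{guF} on $u_F$ (which are integrable, indeed exponentially decaying, in time by Lemma \ref{Heatflow}), and to $\kappa\le\rho^{-1}\le 1+\|a_0\|_{L^\infty}$. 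First I would differentiate the $\bar{u}$-equation of \eqref{Modelbar} in $t$ and take the $L^2$ inner product with $\bar{u}_t$. Using the continuity equation $\partial_t\rho=-\diverg(\rho(u_F+\bar{u}))$ to eliminate the time derivatives of $\rho$, together with $\diverg\bar{u}=\diverg u_F=0$ (so the pressure term drops out of the pairing), one is led to an identity of the form
\begin{align*}
\frac12\frac{d}{dt}\|\sqrt{\rho}\,\bar{u}_t\|_{L^2}^2+\|\nabla\bar{u}_t\|_{L^2}^2
=&\int_{\R^2}G_t\cdot\bar{u}_t\,dx-\int_{\R^2}\partial_t\big(\rho(u_F+\bar{u})\big)\cdot\nabla\bar{u}\cdot\bar{u}_t\,dx\\
&+\int_{\R^2}\diverg\big(\rho(u_F+\bar{u})\big)\,|\bar{u}_t|^2\,dx.
\end{align*}

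The crux is to bound the right-hand side by $\frac12\|\nabla\bar{u}_t\|_{L^2}^2+\phi(t)\big(1+\|\bar{u}_t\|_{L^2}^2\big)$ with $\phi\in L^1([t_1,T^*))$, after which Gronwall's lemma applies. In the last integral one writes $\diverg(\rho(u_F+\bar{u}))|\bar{u}_t|^2=-\rho(u_F+\bar{u})\cdot\nabla|\bar{u}_t|^2$ and estimates it by $\|u_F+\bar{u}\|_{L^4}\|\bar{u}_t\|_{L^2}^{1/2}\|\nabla\bar{u}_t\|_{L^2}^{3/2}$, which Young's inequality absorbs modulo $\|u_F+\bar{u}\|_{L^4}^4\|\bar{u}_t\|_{L^2}^2$, with $\|u_F+\bar{u}\|_{L^4}^4\in L^1_t$ by \eqref{guF}, \eqref{gL2}. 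The middle integral is treated the same way after expanding $\partial_t(\rho(u_F+\bar{u}))=-\diverg(\rho(u_F+\bar{u}))(u_F+\bar{u})+\rho(\Delta u_F+\bar{u}_t)$ (using $\partial_t u_F=\Delta u_F$): its most delicate piece is $\int\rho\,\bar{u}_t\cdot\nabla\bar{u}\cdot\bar{u}_t$, which is $\lesssim\|\nabla\bar{u}\|_{L^2}\|\bar{u}_t\|_{L^2}\|\nabla\bar{u}_t\|_{L^2}$ and is absorbed up to $\|\nabla\bar{u}\|_{L^2}^2\|\bar{u}_t\|_{L^2}^2$; the remaining pieces couple $\bar{u}_t$ or $\nabla\bar{u}_t$ with $\nabla\bar{u}$, $\Delta\bar{u}$ or $\Delta u_F$ and are controlled by \eqref{gL2}, \eqref{gH1}, \eqref{guF}. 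For $\int G_t\cdot\bar{u}_t$ I would expand $G_t$ from $G=(1-\rho)\Delta u_F-\rho u_F\cdot\nabla u_F-\rho\bar{u}\cdot\nabla u_F$ together with $\partial_t u_F=\Delta u_F$: the $\partial_t\rho$-contributions are handled by integrating by parts against $\bar{u}_t$ so that no derivative falls on $\rho$, the term $-\rho(\bar{u}_t\cdot\nabla u_F)\cdot\bar{u}_t$ is bounded by $\|\nabla u_F\|_{L^\infty}\|\bar{u}_t\|_{L^2}^2$, and every surviving term pairs $\bar{u}_t$ or $\nabla\bar{u}_t$ with a $u_F$-quantity that is time-integrable by \eqref{guF}. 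Since $\bar{u}(t_1)=0$, the equation at $t=t_1$ reduces to $\rho\,\bar{u}_t(t_1)+\nabla\Pi(t_1)=G(t_1)$ with $\diverg\bar{u}_t(t_1)=0$; as $G(t_1)\in L^2$ (cf.\ \eqref{gG}, using the extra regularity $u(t_1)\in\dot{B}_{p,1}^{\frac{2}{p}-1}(\R^2)\cap\dot{B}_{p,1}^{\frac{2}{p}+2}(\R^2)$), the $L^2$ elliptic estimate for $\nabla\Pi(t_1)$ gives $\bar{u}_t(t_1)\in L^2$, and Gronwall then yields $\|\bar{u}_t\|_{L^\infty([t_1,T^*);L^2)}+\|\nabla\bar{u}_t\|_{L^2([t_1,T^*);L^2)}\le C$.

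Next I would upgrade $\Delta\bar{u}$ and $\nabla\Pi$ by reading the momentum equation as the stationary Stokes system $-\Delta\bar{u}+\nabla\Pi=G-\rho\,\bar{u}_t-\rho(u_F+\bar{u})\cdot\nabla\bar{u}$, $\diverg\bar{u}=0$, and invoking the classical $L^r$ estimate $\|\Delta\bar{u}\|_{L^r}+\|\nabla\Pi\|_{L^r}\lesssim\|G-\rho\,\bar{u}_t-\rho(u_F+\bar{u})\cdot\nabla\bar{u}\|_{L^r}$ for $1<r<\infty$. For $r=2$ the right-hand side is controlled in $L^\infty_t$ by $\|G\|_{L^2}+\|\bar{u}_t\|_{L^2}+\|u_F+\bar{u}\|_{L^4}\|\nabla\bar{u}\|_{L^2}^{1/2}\|\Delta\bar{u}\|_{L^2}^{1/2}$, the last term absorbed by Young's inequality; with the bound on $\bar{u}_t$ just proved and \eqref{guF}, \eqref{gL2}, \eqref{gH1} this gives $\|\Delta\bar{u},\nabla\Pi\|_{L^\infty([t_1,T^*);L^2)}\le C$. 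For $r=q\in[2,\infty)$, the case $q=2$ is already contained in \eqref{gH1}; for $q>2$ I would use, pointwise in time, $\|\bar{u}_t\|_{L^q}\lesssim\|\bar{u}_t\|_{L^2}^{2/q}\|\nabla\bar{u}_t\|_{L^2}^{1-2/q}$, $\|\nabla\bar{u}\|_{L^q}\lesssim\|\nabla\bar{u}\|_{L^2}^{2/q}\|\Delta\bar{u}\|_{L^2}^{1-2/q}$ and $\|\bar{u}\cdot\nabla\bar{u}\|_{L^q}\lesssim\|\bar{u}\|_{L^2}^{1/q}\|\nabla\bar{u}\|_{L^2}\|\Delta\bar{u}\|_{L^2}^{1-1/q}$, and then H\"older in time together with the $L^2_tL^2_x$ bounds on $\bar{u}_t$, $\nabla\bar{u}_t$, $\Delta\bar{u}$, the $L^\infty_tL^2_x$ bounds on $\bar{u}_t$, $\nabla\bar{u}$, $\Delta\bar{u}$, and the time-integrable norms of $u_F$ from \eqref{guF} and Lemma \ref{Heatflow}; this places the Stokes right-hand side in $L^2([t_1,T^*);L^q)$ and hence yields $\|\Delta\bar{u},\nabla\Pi\|_{L^2([t_1,T^*);L^q)}\le C$.

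I expect the main obstacle to be the estimates of $\int G_t\cdot\bar{u}_t$ and of $\int\partial_t(\rho(u_F+\bar{u}))\cdot\nabla\bar{u}\cdot\bar{u}_t$. One must integrate by parts the $\partial_t\rho$-contributions carefully, since $\nabla\rho$ is only as regular as $\nabla a\in\dot{B}_{p,1}^{\frac{2}{p}-1}(\R^2)$, which fails to be in $L^2$ when $p>2$, so every derivative has to be steered onto $u_F$, $\bar{u}$ or $\bar{u}_t$; and the resulting trilinear expressions in $\bar{u}_t$ and $\nabla\bar{u}_t$ must be dispatched by the two-dimensional interpolation inequalities so that they are genuinely absorbed into $\|\nabla\bar{u}_t\|_{L^2}^2$, the remaining coefficients --- $\|u_F+\bar{u}\|_{L^4}^4$, $\|\nabla\bar{u}\|_{L^2}^2$, $\|\nabla u_F\|_{L^\infty}$, and the like --- being $L^1$ in time by Lemmas \ref{Lemma5.1}, \ref{Lemma5.2} and \eqref{guF}.
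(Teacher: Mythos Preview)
Your proposal is correct and follows essentially the same approach as the paper: differentiate the $\bar u$-equation in time, take the $L^2$ inner product with $\bar u_t$, eliminate the $\rho_t$-contributions via the continuity equation and integration by parts so that no derivative lands on $\rho$, absorb all trilinear terms by the two-dimensional Gagliardo--Nirenberg inequality and Young's inequality, apply Gronwall (using that $\|\bar u_t(t_1)\|_{L^2}$ is finite from the equation at $t=t_1$), and then read off the $\Delta\bar u$ and $\nabla\Pi$ bounds from the Stokes structure together with interpolation. The paper merely organizes the right-hand side differently, expanding $G_t$ and the convection terms explicitly into pieces $I_1,I_2,I_3$ before estimating, but the ingredients and logic are the same as yours.
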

\begin{proof} Firstly, applying $\partial_t$ to the $\bar{u}$ equation of \eqref{Modelbar} and then
taking the $L^2$ inner product of the resulting equation with $\bar{u}_t$, we obtain
\begin{align}\label{g5}
&\frac12\frac{d}{dt}\|\sqrt{\rho}\bar{u}_t\|_{L^2}^2+\|\nabla \bar{u}_t\|_{L^2}^2\nonumber\\
=&\int_{\R^2}(1-\rho)\bar{u}_t\cdot\Delta^2 u_F dx\nonumber\\
&-\int_{\R^2}\rho_t \bar{u}_t\cdot\big(\bar{u}_t+\bar{u}\cdot\nabla \bar{u}+u_F\cdot\nabla \bar{u}
+\bar{u}\cdot\nabla u_F+\Delta u_F+u_F\cdot\nabla u_F\big)dx\nonumber\\
&-\int_{\R^2}\rho \bar{u}_t\cdot\big(\bar{u}_t\cdot\nabla\bar{u}+\Delta u_F\cdot\nabla\bar{u}+\bar{u}_t\cdot\nabla u_F
+\bar{u}\cdot\nabla\Delta u_F+\partial_t(u_F\cdot\nabla u_F)\big)dx\nonumber\\
\stackrel{\rm{def}}{=}&I_1+I_2+I_3.
\end{align}
Using $\rho_t=-\diverg(\rho(u_F+\bar{u}))$ and integration by parts gives
\begin{align*}
I_2=&-2\int_{\R^2}\rho \bar{u}_t\cdot\big((u_F+\bar{u})\cdot\nabla \bar{u}_t\big)dx-\int_{\R^2}\rho(u_F+\bar{u})^j\bar{u}_t\cdot\big
\{(u_F+\bar{u})\cdot\nabla\partial_j\bar{u}\nonumber\\
&+\partial_j\bar{u}\cdot\nabla\bar{u}+\partial_ju_F\cdot\nabla\bar{u}+\partial_j \bar{u}\cdot\nabla u_F+\bar{u}\cdot\nabla \partial_j u_F
+\partial_j\Delta u_F+\partial_j(u_F\cdot\nabla u_F)\big\}dx\nonumber\\
&-\int_{\R^2}\rho\big((u_F+\bar{u})\cdot\nabla\bar{u}_t\big)\cdot\big((u_F+\bar{u})\cdot\nabla\bar{u}
+\bar{u}\cdot\nabla u_F+\Delta u_F+u_F\cdot\nabla u_F\big)dx\nonumber\\
\stackrel{\rm{def}}{=}&I_2^1+I_2^2+I_2^3.
\end{align*}
Then it is easy to observe that
\begin{align}\label{gI1}
I_1\lesssim\|a\|_{L^{\frac{2p}{p-2}}}\|\Delta^2u_F\|_{L^p}\|\sqrt{\rho}\bar{u}_t\|_{L^2}
\lesssim\|u_F\|_{\dot{B}_{p,1}^{4}}\|\sqrt{\rho}\bar{u}_t\|_{L^2}.
\end{align}
Applying H${\rm\ddot{o}}$lder's inequality and Gagliardo-Nirenberg inequality, we infer for any $\eta>0$ that
\begin{align}\label{gI21}
I_2^1\lesssim&\|\sqrt{\rho}\bar{u}_t\|_{L^2}\|u_F\|_{L^\infty}\|\nabla\bar{u}_t\|_{L^2}
+\|\bar{u}_t\|_{L^4}\|\bar{u}\|_{L^4}\|\nabla \bar{u}_t\|_{L^2}\nonumber\\
\lesssim&\|\sqrt{\rho}\bar{u}_t\|_{L^2}\|u_F\|_{\dot{B}_{p,1}^{\frac2p}}\|\nabla \bar{u}_t\|_{L^2}
+\|\sqrt{\rho}\bar{u}_t\|_{L^2}^{\frac12}\|\bar{u}\|_{L^2}^{\frac12}\|\nabla \bar{u}\|_{L^2}^{\frac12}\|\nabla \bar{u}_t\|_{L^2}^{\frac32}\nonumber\\
\lesssim&\eta\|\nabla \bar{u}_t\|_{L^2}^2+\frac{1}{\eta^3}\big(\|u_F\|_{\dot{B}_{p,1}^{\frac2p}}^2
+\|\bar{u}\|_{L^2}^2\|\nabla\bar{u}\|_{L^2}^2\big)\|\sqrt{\rho}\bar{u}_t\|_{L^2}^2.
\end{align}
Yet thanks to \eqref{guF}, \eqref{gL2} and \eqref{gH1}, we infer for any $p_1\in[p,\infty]$ and $p_2\in[2,\infty)$ that
$$
\|u_F\|_{L^{p_1}}\leq C\|u_F\|_{\dot{B}_{p,1}^{\frac2p-\frac{2}{p_1}}}\leq C\ \ \mathrm{and}\ \
\|\bar{u}\|_{L^{p_2}}\leq C\|\bar{u}\|_{H^1}\leq C,
$$
which together with a similar argument as \eqref{gI21} results in
\begin{align}
I_2^2\lesssim&
\|\sqrt{\rho}\bar{u}_t\|_{L^2}\big\{\|u_F+\bar{u}\|_{L^\infty}^2\|\nabla^2 \bar{u}\|_{L^2}+\|u_F+\bar{u}\|_{L^6}\|\nabla \bar{u}\|_{L^6}^2\nonumber\\
&+\|u_F+\bar{u}\|_{L^4}\|\nabla u_F\|_{L^\infty}\|\nabla \bar{u}\|_{L^4}+\|u_F+\bar{u}\|_{L^4}\|\bar{u}\|_{L^4}\|\nabla^2 u_F\|_{L^\infty}\nonumber\\
&+\|u_F+\bar{u}\|_{L^\frac{2p}{p-2}}\big(\|\nabla^3u_F\|_{L^p}+\|\nabla u_F\|_{L^{2p}}^2
+\|u_F\|_{L^p}\|\nabla^2u_F\|_{L^\infty}\big)\big\}\nonumber\\
\lesssim&\|\sqrt{\rho}\bar{u}_t\|_{L^2}\big\{\|u_F\|_{L^\infty}\|\Delta\bar{u}\|_{L^2}+\|\bar{u}\|_{L^2}\|\Delta\bar{u}\|_{L^2}^2
+\|\nabla\bar{u}\|_{L^2}^\frac23\|\Delta\bar{u}\|_{L^2}^\frac43\nonumber\\
&+\|\nabla u_F\|_{L^\infty}\|\nabla\bar{u}\|_{L^2}^\frac12\|\Delta\bar{u}\|_{L^2}^\frac12
+\|\nabla^2 u_F\|_{L^\infty}+\|\nabla^3u_F\|_{L^p}+\|\nabla u_F\|_{L^{2p}}^2\big\}\nonumber\\
\lesssim&\|\sqrt{\rho}\bar{u}_t\|_{L^2}\big\{\|\Delta\bar{u}\|_{L^2}^2+\|\nabla\bar{u}\|_{L^2}^2+\|u_F\|_{\dot{B}_{p,1}^{\frac2p}}^2
+\|u_F\|_{\dot{B}_{p,1}^{\frac2p+1}}^2+\|u_F\|_{\dot{B}_{p,1}^{\frac2p+2}}\nonumber\\
&+\|u_F\|_{\dot{B}_{p,1}^{3}}+\|u_F\|_{\dot{B}_{p,1}^{\frac1p+1}}^2\big\}.
\end{align}
Along the same line, one has
\begin{align}
I_2^3\lesssim&
\|\nabla\bar{u}_t\|_{L^2}\big\{\|u_F+\bar{u}\|_{L^6}^2\|\nabla \bar{u}\|_{L^6}+\|u_F+\bar{u}\|_{L^4}\|\bar{u}\|_{L^4}\|\nabla u_F\|_{L^\infty}\nonumber\\
&+\|u_F+\bar{u}\|_{L^\frac{2p}{p-2}}\big(\|\Delta u_F\|_{L^p}+\|u_F\|_{L^p}\|\nabla u_F\|_{L^\infty}\big)\big\}\nonumber\\
\lesssim&\eta\|\nabla \bar{u}_t\|_{L^2}^2+\frac1\eta\big(\|\nabla \bar{u}\|_{L^2}^2+\|\Delta\bar{u}\|_{L^2}^2
+\|u_F\|_{\dot{B}_{p,1}^{\frac2p+1}}^2+\|u_F\|_{\dot{B}_{p,1}^{2}}^2\big).
\end{align}
Finally thanks to \eqref{gconvection}, \eqref{gL2} and \eqref{gH1}, using the same argument leads to
\begin{align}\label{gI3}
I_3\lesssim&\|\bar{u}_t\|_{L^4}^2\|\nabla \bar{u}\|_{L^2}+\|\sqrt{\rho}\bar{u}_t\|_{L^2}\|\Delta u_F\|_{L^\infty}\|\nabla\bar{u}\|_{L^2}
+\|\sqrt{\rho}\bar{u}_t\|_{L^2}^2\|\nabla u_F\|_{L^\infty}\nonumber\\
&+\|\sqrt{\rho}\bar{u}_t\|_{L^2}\|\bar{u}\|_{L^2}\|\nabla \Delta u_F\|_{L^\infty}
+\|\sqrt{\rho}\bar{u}_t\|_{L^2}\|\partial_t(u_F\cdot\nabla u_F)\|_{L^2}
\nonumber\\
\lesssim&\|\bar{u}_t\|_{L^2}\|\nabla\bar{u}_t\|_{L^2}\|\nabla \bar{u}\|_{L^2}
+\|\sqrt{\rho}\bar{u}_t\|_{L^2}^2\|\nabla u_F\|_{L^\infty}\nonumber\\
&+\|\sqrt{\rho}\bar{u}_t\|_{L^2}\big(\|\Delta u_F\|_{L^\infty}+\|\nabla \Delta u_F\|_{L^\infty}
+\|u_F\|_{\dot{B}_{p,1}^{\frac2p+3}}\|u_F\|_{\dot{B}_{p,1}^{\frac2p-1}}\big)
\nonumber\\
\lesssim&\eta\|\nabla\bar{u}_t\|_{L^2}^2+
\|\sqrt{\rho}\bar{u}_t\|_{L^2}^2\big(\frac1\eta\|\nabla\bar{u}\|_{L^2}^2+\|u_F\|_{\dot{B}_{p,1}^{\frac2p+1}}\big)\nonumber\\
&+\|\sqrt{\rho}\bar{u}_t\|_{L^2}\big(\|u_F\|_{\dot{B}_{p,1}^{\frac2p+2}}+\|u_F\|_{\dot{B}_{p,1}^{\frac2p+3}}\big).
\end{align}
Thus, plugging \eqref{gI1}--\eqref{gI3} into \eqref{g5} and taking $\eta$ small enough yields
\begin{align}\label{gIsum}
&\frac{d}{dt}\|\sqrt{\rho}\bar{u}_t\|_{L^2}^2+\|\nabla \bar{u}_t\|_{L^2}^2\nonumber\\
\lesssim&\|\sqrt{\rho}\bar{u}_t\|_{L^2}\big(\|\Delta\bar{u}\|_{L^2}^2+\|\nabla\bar{u}\|_{L^2}^2
+\|u_F\|_{\dot{B}_{p,1}^{\frac2p+1}}+\|u_F\|_{\dot{B}_{p,1}^{4}}\big)\nonumber\\
&+\|\sqrt{\rho}\bar{u}_t\|_{L^2}^2\big(\|\nabla\bar{u}\|_{L^2}^2+\|u_F\|_{\dot{B}_{p,1}^{\frac2p+1}}\big)
+\|\nabla \bar{u}\|_{L^2}^2+\|\Delta\bar{u}\|_{L^2}^2+\|u_F\|_{\dot{B}_{p,1}^{\frac2p+1}}^2+\|u_F\|_{\dot{B}_{p,1}^{2}}^2\nonumber\\
\lesssim&\big(1+\|\sqrt{\rho}\bar{u}_t\|_{L^2}^2\big)\big(\|\Delta\bar{u}\|_{L^2}^2+\|\nabla\bar{u}\|_{L^2}^2
+\|u_F\|_{\dot{B}_{p,1}^{\frac2p+1}}+\|u_F\|_{\dot{B}_{p,1}^{4}}\big).
\end{align}
where we used the fact that $\|u_F\|_{\dot{B}_{p,1}^{s}}\leq\|u_F\|_{\dot{B}_{p,1}^{s_1}}+\|u_F\|_{\dot{B}_{p,1}^{s_2}}$ for $s\in[s_1,s_2]$.

Whereas taking the $L^2$ inner product of the $\bar{u}$ equation of \eqref{Modelbar} with $\bar{u}_t$ at $t=t_1$
and using \eqref{guF} and \eqref{gconvection} results in
\begin{align*}
&\|(\sqrt{\rho}\bar{u}_t)(t_1)\|_{L^2}\leq C\|a(t_1)\|_{L^\frac{2p}{p-2}}\|\Delta u_F(t_1)\|_{L^p}+C\|u_F\cdot\nabla u_F(t_1)\|_{L^2}\\
\leq&C\|u(t_1)\|_{\dot{B}_{p,1}^{2}}+C\|u(t_1)\|_{\dot{B}_{p,1}^{\frac2p-1}}\|u(t_1)\|_{\dot{B}_{p,1}^{\frac2p+1}}\leq C.
\end{align*}
As a consequence, applying Gronwall's inequality to \eqref{gIsum} and taking advantage of \eqref{guF}, \eqref{gL2} and \eqref{gH1} gives rise to
\begin{align}\label{gH21}
\|\bar{u}_t\|_{L^\infty([t_1,T^*);L^2)}+\|\nabla \bar{u}_t\|_{L^2([t_1,T^*);L^2)}\leq C.
\end{align}

Owing to \eqref{Modelbar}, we can derive the second space derivative estimate of $\bar{u}$.
Indeed, we deduce from \eqref{gG}, \eqref{guPi} and \eqref{g4} that
\begin{align*}
\|\Delta \bar{u}\|_{L^2}^2+\|\nabla\Pi\|_{L^2}^2
\lesssim&\|\bar{u}_t\|_{L^2}^2+\big(1+\|\nabla\bar{u}\|_{L^2}^2\big)\|u_F\|_{\dot{B}_{p,1}^{\frac2p-1}}\|u_F\|_{\dot{B}_{p,1}^{\frac2p+1}}\\
&+\|\bar{u}\|_{L^2}^2\|\nabla \bar{u}\|_{L^2}^4+\|u_F\|_{\dot{B}_{p,1}^{2}}+\|\bar{u}\|_{L^2}\|u_F\|_{\dot{B}_{p,1}^{\frac2p+1}},
\end{align*}
which along with \eqref{guF}, \eqref{gL2}, \eqref{gH1} and \eqref{gH21} ensures that
\begin{align}\label{gH22}
\|\Delta \bar{u}\|_{L^\infty([t_1,T^*);L^2)}+\|\nabla\Pi\|_{L^\infty([t_1,T^*);L^2)}\leq C.
\end{align}
While thanks to $\diverg\bar{u}=0$, we deduce from the $\bar{u}$ equation of \eqref{Modelbar} for $q\in[p,\infty)$ that
\begin{align*}
\|\Delta\bar{u}\|_{L^q}+\|\nabla\Pi\|_{L^q}
\lesssim&\|\bar{u}_t\|_{L^q}+\|u_F+\bar{u}\|_{L^{2q}}\|\nabla(u_F+\bar{u})\|_{L^{2q}}+\|\Delta u_F\|_{L^q}\nonumber\\
\lesssim&\|\bar{u}_t\|_{L^2}^{\frac2q}\|\nabla\bar{u}_t\|_{L^2}^{1-\frac2q}+\|\nabla\bar{u}\|_{L^2}^{\frac1q}\|\Delta\bar{u}\|_{L^2}^{1-\frac1q}\nonumber\\
&+\|u_F\|_{\dot{B}_{p,1}^{1+\frac2p-\frac1q}}+\|u_F\|_{\dot{B}_{p,1}^{2(1+\frac1p-\frac1q)}},
\end{align*}
which along with \eqref{guF}, \eqref{gL2}, \eqref{gH1} and \eqref{gH21} implies
\begin{align}\label{gH23}
\|\Delta\bar{u}\|_{L^2([t_1,T^*);L^q)}+\|\nabla\Pi\|_{L^2([t_1,T^*);L^q)}\leq C.
\end{align}
Combining \eqref{gH21}--\eqref{gH23} and \eqref{gH1} gives the results.
\end{proof}

\begin{Remark}
For $p\in(1,2]$, we have $\dot{B}_{p,1}^{\frac2p-1}(\R^2)\hookrightarrow L^2(\R^2)$. Instead of using the decomposition $u=u_F+\bar{u}$, we could directly present the $L^2$ energy estimates for $u$. With some slight modifications of the proof of Lemmas \ref{Lemma5.1}--\ref{Lemma5.3}, we could deduce from \eqref{Modelrho} for $q\in[2,\infty)$ that
\begin{align}\label{gH2u}
\|u_t,\Delta u,\nabla\Pi\|_{L^\infty([t_1,T^*);L^2)}+\|\nabla u_t\|_{L^2([t_1,T^*);L^2)}
+\|\Delta u,\nabla\Pi\|_{L^2([t_1,T^*);L^q)}\leq C.
\end{align}
\end{Remark}

\subsection{Proof of the global well-posedness part of Theorem \ref{Theorem1.1}}
Firstly, thanks to Lemma \ref{Bernstein Lemma}, one has for any $q\in(2,\infty)$
\begin{align*}
\|\bar{u}\|_{\dot{B}_{q,1}^{\frac{2}{q}+1}}\lesssim\|\nabla\bar{u}\|_{L^q}^{1-\frac2q}\|\Delta \bar{u}\|_{L^q}^{\frac2q}
\lesssim\|\nabla\bar{u}\|_{H^1}^{1-\frac2q}\|\Delta \bar{u}\|_{L^q}^{\frac2q},
\end{align*}
which together with \eqref{guF}, \eqref{gL2},\eqref{gH1},\eqref{gH2} and \eqref{gH2u} results in
\begin{align*}
\|u\|_{L^1([t_1,t];\dot{B}_{q,1}^{\frac{2}{q}+1})}\leq Ct^\frac12,\quad t< T^*.
\end{align*}
Then for $q\in(2,\infty)$ with $\frac1q\geq\frac1p-\frac12$,
we get by applying Proposition \ref{Proposition2.1} to the transport equation of \eqref{Modela} that
\begin{align}\label{ga}
\|a\|_{\widetilde{L}^\infty([t_1,t];B_{p,1}^{\frac{2}{p}})}
\leq\|a(t_1)\|_{B_{p,1}^{\frac{2}{p}}}\exp\left(C\|u\|_{L^1([t_1,t];\dot{B}_{q,1}^{\frac{2}{q}+1})}\right)\leq C\exp\big(Ct^\frac12\big).
\end{align}
On the other hand, we rewrite the equation for $u$ in \eqref{Modela} as
$$
\partial_t u-\Delta u+\nabla\Pi=\frac{a}{1+a}\partial_t u-\frac{1}{1+a}u\cdot\nabla u.
$$
Thanks to $\diverg u=0$, it is easy to observe that for $t\in[t_1,T^*)$
\begin{align}\label{gstokes}
&\|u\|_{\widetilde{L}^\infty([t_1,t];\dot{B}_{p,1}^{\frac{2}{p}-1})}+\|u\|_{L^1([t_1,t];\dot{B}_{p,1}^{\frac{2}{p}+1})}
+\|\nabla\Pi\|_{L^1([t_1,t];\dot{B}_{p,1}^{\frac{2}{p}-1})}\nonumber\\
\lesssim&\|u(t_1)\|_{\dot{B}_{p,1}^{\frac{2}{p}-1}}+\big\|\frac{a}{1+a}\partial_tu\big\|_{L^1([t_1,t];\dot{B}_{p,1}^{\frac{2}{p}-1})}
+\big\|\frac{1}{1+a}u\cdot\nabla u\big\|_{L^1([t_1,t];\dot{B}_{p,1}^{\frac{2}{p}-1})}.
\end{align}
Yet applying Lemma \ref{Product Laws} leads to
\begin{align}\label{g6}
\big\|\frac{1}{1+a}u\cdot\nabla u\big\|_{L^1([t_1,t];\dot{B}_{p,1}^{\frac{2}{p}-1})}
\leq& C\big(1+\|a\|_{L^\infty([t_1,t];\dot{B}_{p,1}^{\frac{2}{p}})}\big)
\int_{t_1}^{t}\|u\|_{\dot{B}_{p,1}^{\frac{2}{p}-1}}\|u\|_{\dot{B}_{q,1}^{\frac{2}{q}+1}}dt'\nonumber\\
\leq& C\exp\big(Ct^\frac12\big)\int_{t_1}^{t}\|u\|_{\dot{B}_{p,1}^{\frac{2}{p}-1}}\|u\|_{\dot{B}_{q,1}^{\frac{2}{q}+1}}dt'.
\end{align}
where $q\in(2,\infty)$ satisfies $\frac1q>\frac12-\frac1p$ and $\frac1q\geq\frac1p-\frac12$. To control the second term in the right hand side of \eqref{gstokes}, we use again Lemma \ref{Product Laws} to get for $p\in(1,2]$ and $\alpha\in(\frac2p-1,1)$ that
\begin{align}
\big\|\frac{a}{1+a}\partial_t u\big\|_{L^1([t_1,t];\dot{B}_{p,1}^{\frac{2}{p}-1})}
\leq&C\|a\|_{L^\infty([t_1,t];\dot{B}_{p,1}^{\frac{2}{p}-\alpha})}\|\partial_tu\|_{L^1([t_1,t];\dot{B}_{2,1}^{\alpha})}\nonumber\\
\leq&Ct^\frac12\|a\|_{L^\infty([t_1,t];B_{p,1}^{\frac{2}{p}})}\|\partial_tu\|_{L^2([t_1,t];H^1)}\leq C\exp\big(Ct^\frac12\big).
\end{align}
While for $p\in(2,4)$ and $\alpha\in(0,\frac2p)$, we  alternatively get
\begin{align}\label{g7}
&\big\|\frac{a}{1+a}\partial_t u\big\|_{L^1([t_1,t];\dot{B}_{p,1}^{\frac{2}{p}-1})}\nonumber\\
\leq& C\|a\|_{L^\infty([t_1,t];\dot{B}_{p,1}^{\frac{2}{p}})}\|u_F\|_{L^1([t_1,t];\dot{B}_{p,1}^{\frac{2}{p}+1})}
+C\|a\|_{L^\infty([t_1,t];\dot{B}_{p,1}^{\frac{2}{p}-\alpha})}\|\partial_t\bar{u}\|_{L^1([t_1,t];\dot{B}_{2,1}^{\alpha})}\nonumber\\
\leq& C\exp\big(Ct^\frac12\big).
\end{align}
Plugging \eqref{g6}--\eqref{g7} into \eqref{gstokes} and then applying Gronwall's inequality leads to
\begin{align}\label{gfinal}
\|u\|_{\widetilde{L}^\infty([t_1,t];\dot{B}_{p,1}^{\frac{2}{p}-1})}+\|u\|_{L^1([t_1,t];\dot{B}_{p,1}^{\frac{2}{p}+1})}
+\|\nabla\Pi\|_{L^1([t_1,t];\dot{B}_{p,1}^{\frac{2}{p}-1})}\leq C\exp\left\{C\exp\big(Ct^\frac12\big)\right\}.
\end{align}
From \eqref{ga} and \eqref{gfinal}, we infer by a standard argument that $T^*=\infty$.

The proof of Theorem \ref{Theorem1.1} is completed.

\bigskip

\noindent{\large\bf  References}

\end{document}